\documentclass[]{lmcs}
\pdfoutput=1

\usepackage{lastpage}
\lmcsdoi{21}{2}{9}
\lmcsheading{}{\pageref{LastPage}}{}{}%
{Apr.~10,~2024}{Apr.~29,~2025}{}

\pdfoutput=1
\RequirePackage{hyperref}

\definecolor{nordred}{HTML}{bf616a}
\definecolor{nordblue}{HTML}{81a1c1}
\definecolor{norddarkblue}{HTML}{5e81ac}
\definecolor{nordgreen}{HTML}{a3be8c}
\definecolor{nordnight}{HTML}{4c566a}

\AtEndPreamble{%
\hypersetup{
  breaklinks = true,
  linktocpage,
  colorlinks = true,
  linkcolor = nordnight,
  citecolor = black,
  urlcolor = black,
}
}

\newcommand{\colorPre}[1]{{\color{nordred}{#1}}}
\newcommand{\colorMon}[1]{{#1}}
\newcommand{\pure}{\colorMon{pure}}
\newcommand{\effectful}{\colorPre{effectful}}

\usepackage{macros}
\usepackage[leftmargin=1em,rightmargin=1ex,vskip=1ex]{quoting}
\setlength{\belowcaptionskip}{-10pt}
\usepackage{lscape}
\usepackage{unicode-mario}
\usepackage{mathpartir}

\newcommand\MonCatStr{\hyperlink{linkStrictMonoidalFunctor}{\ensuremath{\mathbf{MonCat}_{\mathsf{Str}}}}}

\newcommand\strictMonoidalCategory{\hyperlink{linkStrictMonoidalCategory}{strict monoidal category}}
\newcommand\strictMonoidalCategories{\hyperlink{linkStrictMonoidalCategory}{strict monoidal categories}}

\newcommand\stringDiagrams{\hyperlink{linkStringDiagram}{string diagrams}}

\newcommand{\Run}{\colorPre{\operatorname{Run}}}
\newcommand{\PolyGraph}{\hyperlink{linkPolygraph}{\mathbf{PolyGr}}}
\newcommand{\EffPolyGraph}{\hyperlink{linkpolygraphcouple}{\mathbf{EffPolyGr}}}

\hyphenation{co-pre-shea-ves}
\hyphenation{Rin-dler}

\newcommand{\Put}{\mathsf{put}}
\newcommand{\Get}{\mathsf{get}} 

\newcommand{\IconPure}[1]{\scalebox{0.5}{
\begin{tikzpicture}[x=0.75pt,y=0.75pt,yscale=-1,xscale=1,baseline=-26,line width=1.25pt]
\draw    (36,8) -- (36,20) ;
\draw    (36,36) -- (36,48) ;
\draw   (8,20) -- (40,20) -- (40,36) -- (8,36) -- cycle ;
\draw    (12,8) -- (12,20) ;
\draw    (12,36) -- (12,48) ;
\draw (24,28) node  [font=\footnotesize]  {$\pmb{#1}$};
\draw (24,12) node  [font=\footnotesize]  {$\pmb{\dots}$};
\draw (24,42) node  [font=\footnotesize]  {$\pmb{\dots}$};
\end{tikzpicture}}}

\newcommand{\IconId}[1]{\scalebox{0.5}{
\begin{tikzpicture}[x=0.75pt,y=0.75pt,yscale=-1,xscale=1,line width=1.25pt, baseline=-26]
\draw    (13.5,14) -- (13.5,46) ;
\draw (13.5,10.6) node [anchor=south] [inner sep=0.75pt]  [font=\tiny]  {$\pmb{#1}$};
\end{tikzpicture}}}

\newcommand{\IconRun}{\scalebox{0.5}{
\begin{tikzpicture}[x=0.75pt,y=0.75pt,yscale=-1,xscale=1,line width=1.25pt, baseline=-26]
\draw  [color={rgb, 255:red, 191; green, 97; blue, 106 }  ,draw opacity=1 ] (13.5,14) -- (13.5,46) ;
\draw [color={rgb, 255:red, 191; green, 97; blue, 106 }  ,draw opacity=1 ] (13.5,10.6) node [anchor=south] [inner sep=0.75pt]  [font=\tiny]  {$\pmb{R}$};
\end{tikzpicture}}}

\newcommand{\EmptyBox}{\scalebox{0.5}{
\begin{tikzpicture}[x=0.75pt,y=0.75pt,yscale=-1,xscale=1,line width=1.25pt, baseline=-26]
\draw  [dash pattern={on 4.5pt off 4.5pt}] (0,16) -- (24,16) -- (24,36) -- (0,36) -- cycle ;
\end{tikzpicture}}}

\newcommand{\IconEff}[1]{\scalebox{0.5}{
\begin{tikzpicture}[x=0.75pt,y=0.75pt,yscale=-1,xscale=1,baseline=-26, line width=1.25pt]
\draw [color={rgb, 255:red, 191; green, 97; blue, 106 }  ,draw opacity=1 ]   (24,8) -- (24,20) ;
\draw    (36,8) -- (36,20) ;
\draw [color={rgb, 255:red, 191; green, 97; blue, 106 }  ,draw opacity=1 ]   (24,36) -- (24,48) ;
\draw    (36,36) -- (36,48) ;
\draw   (8,20) -- (40,20) -- (40,36) -- (8,36) -- cycle ;
\draw    (12,8) -- (12,20) ;
\draw    (12,36) -- (12,48) ;

\draw (24,28) node  [font=\footnotesize]  {$\pmb{#1}$};
\end{tikzpicture}}}

\newcommand{\IconComp}[2]{\scalebox{0.5}{
\begin{tikzpicture}[x=0.75pt,y=0.75pt,yscale=-1,xscale=1,line width=1.25pt,baseline=-34]
\draw [color={rgb, 255:red, 191; green, 97; blue, 106 }  ,draw opacity=1 ]   (24,12) -- (24,20) ;
\draw    (36,12) -- (36,20) ;
\draw [color={rgb, 255:red, 191; green, 97; blue, 106 }  ,draw opacity=1 ]   (24,36) -- (24,44) ;
\draw    (36,36) -- (36,44) ;
\draw   (8,20) -- (40,20) -- (40,36) -- (8,36) -- cycle ;
\draw    (12,12) -- (12,20) ;
\draw    (12,36) -- (12,44) ;
\draw [color={rgb, 255:red, 191; green, 97; blue, 106 }  ,draw opacity=1 ]   (24,60) -- (24,68) ;
\draw    (36,60) -- (36,68) ;
\draw   (8,44) -- (40,44) -- (40,60) -- (8,60) -- cycle ;
\draw    (12,60) -- (12,68) ;

\draw (24,28) node  [font=\footnotesize]  {$\pmb{#1}$};
\draw (24,52) node  [font=\footnotesize]  {$\pmb{#2}$};
\end{tikzpicture}}}

\newcommand{\IconThree}{\scalebox{0.5}{
\begin{tikzpicture}[x=0.75pt,y=0.75pt,yscale=-1,xscale=1,baseline=-26,line width=1.25]
\draw [color={rgb, 255:red, 191; green, 97; blue, 106 }  ,draw opacity=1 ]   (24,12) -- (24,20) ;
\draw    (36,12) -- (36,20) ;
\draw [color={rgb, 255:red, 191; green, 97; blue, 106 }  ,draw opacity=1 ]   (24,20) -- (24,44) ;
\draw    (36,20) -- (36,44) ;
\draw    (12,12) -- (12,20) ;
\draw    (12,20) -- (12,44) ;
\end{tikzpicture}}}

\newcommand{\IconSwapOne}{\scalebox{0.5}{\begin{tikzpicture}[x=0.75pt,y=0.75pt,yscale=-1,xscale=1,baseline=-26,line width=1.45pt]
    \draw    (8,8) -- (8,16) ;
    \draw [color={rgb, 255:red, 191; green, 97; blue, 106 }  ,draw opacity=1 ]   (8,32) -- (8,40) ;
    \draw [color={rgb, 255:red, 191; green, 97; blue, 106 }  ,draw opacity=1 ]   (24,16) .. controls (23.72,24.14) and (8,24.43) .. (8,32) ;
    \draw  [color={rgb, 255:red, 255; green, 255; blue, 255 }  ,draw opacity=1 ][fill={rgb, 255:red, 255; green, 255; blue, 255 }  ,fill opacity=1 ] (14,24.25) .. controls (14,23.15) and (14.9,22.25) .. (16,22.25) .. controls (17.11,22.25) and (18,23.15) .. (18,24.25) .. controls (18,25.35) and (17.11,26.25) .. (16,26.25) .. controls (14.9,26.25) and (14,25.35) .. (14,24.25) -- cycle ;
    \draw [color={rgb, 255:red, 0; green, 0; blue, 0 }  ,draw opacity=1 ]   (8,16) .. controls (7.72,24.14) and (24,24.43) .. (24,32) ;
    \draw    (24,32) -- (24.01,40) ;
    \draw [color={rgb, 255:red, 191; green, 97; blue, 106 }  ,draw opacity=1 ]   (24.01,8) -- (24,16) ;
    \end{tikzpicture}}}
\newcommand{\IconSwapTwo}{\scalebox{0.5}{
    \begin{tikzpicture}[x=0.75pt,y=0.75pt,yscale=-1,xscale=-1,baseline=-26,line width=1.45pt]
        \draw    (8,8) -- (8,16) ;
        \draw [color={rgb, 255:red, 191; green, 97; blue, 106 }  ,draw opacity=1 ]   (8,32) -- (8,40) ;
        \draw [color={rgb, 255:red, 191; green, 97; blue, 106 }  ,draw opacity=1 ]   (24,16) .. controls (23.72,24.14) and (8,24.43) .. (8,32) ;
        \draw  [color={rgb, 255:red, 255; green, 255; blue, 255 }  ,draw opacity=1 ][fill={rgb, 255:red, 255; green, 255; blue, 255 }  ,fill opacity=1 ] (14,24.25) .. controls (14,23.15) and (14.9,22.25) .. (16,22.25) .. controls (17.11,22.25) and (18,23.15) .. (18,24.25) .. controls (18,25.35) and (17.11,26.25) .. (16,26.25) .. controls (14.9,26.25) and (14,25.35) .. (14,24.25) -- cycle ;
        \draw [color={rgb, 255:red, 0; green, 0; blue, 0 }  ,draw opacity=1 ]   (8,16) .. controls (7.72,24.14) and (24,24.43) .. (24,32) ;
        \draw    (24,32) -- (24.01,40) ;
        \draw [color={rgb, 255:red, 191; green, 97; blue, 106 }  ,draw opacity=1 ]   (24.01,8) -- (24,16) ;
        \end{tikzpicture}}}

\newcommand{\IconThreeToOne}{\scalebox{0.5}{
\begin{tikzpicture}[x=0.75pt,y=0.75pt,yscale=-1,xscale=1,line width=1.25,baseline=-26]
\draw    (8,8) -- (8,16) ;
\draw [color={rgb, 255:red, 191; green, 97; blue, 106 }  ,draw opacity=1 ]   (8,32) -- (8,40) ;
\draw [color={rgb, 255:red, 191; green, 97; blue, 106 }  ,draw opacity=1 ]   (40,16) .. controls (39.71,24.14) and (8,24.43) .. (8,32) ;
\draw  [color={rgb, 255:red, 255; green, 255; blue, 255 }  ,draw opacity=1 ][fill={rgb, 255:red, 255; green, 255; blue, 255 }  ,fill opacity=1 ] (16.6,25.65) .. controls (16.6,24.55) and (17.5,23.65) .. (18.6,23.65) .. controls (19.71,23.65) and (20.6,24.55) .. (20.6,25.65) .. controls (20.6,26.75) and (19.71,27.65) .. (18.6,27.65) .. controls (17.5,27.65) and (16.6,26.75) .. (16.6,25.65) -- cycle ;
\draw    (24,32) -- (24.01,40) ;
\draw [color={rgb, 255:red, 191; green, 97; blue, 106 }  ,draw opacity=1 ]   (40,8) -- (40,16) ;
\draw    (24,8) -- (24,16) ;
\draw    (40,32) -- (40.01,40) ;
\draw  [color={rgb, 255:red, 255; green, 255; blue, 255 }  ,draw opacity=1 ][fill={rgb, 255:red, 255; green, 255; blue, 255 }  ,fill opacity=1 ] (27.2,22.67) .. controls (27.2,21.56) and (28.1,20.67) .. (29.2,20.67) .. controls (30.31,20.67) and (31.2,21.56) .. (31.2,22.67) .. controls (31.2,23.77) and (30.31,24.67) .. (29.2,24.67) .. controls (28.1,24.67) and (27.2,23.77) .. (27.2,22.67) -- cycle ;
\draw [color={rgb, 255:red, 0; green, 0; blue, 0 }  ,draw opacity=1 ]   (24,16) .. controls (23.72,24.14) and (40,24.43) .. (40,32) ;
\draw [color={rgb, 255:red, 0; green, 0; blue, 0 }  ,draw opacity=1 ]   (8,16) .. controls (7.72,24.14) and (24,24.43) .. (24,32) ;
\end{tikzpicture}}}

\newcommand{\IconOneToThree}{\scalebox{0.5}{
\begin{tikzpicture}[x=0.75pt,y=0.75pt,yscale=-1,xscale=-1,line width=1.25,baseline=-26]
\draw    (8,8) -- (8,16) ;
\draw [color={rgb, 255:red, 191; green, 97; blue, 106 }  ,draw opacity=1 ]   (8,32) -- (8,40) ;
\draw [color={rgb, 255:red, 191; green, 97; blue, 106 }  ,draw opacity=1 ]   (40,16) .. controls (39.71,24.14) and (8,24.43) .. (8,32) ;
\draw  [color={rgb, 255:red, 255; green, 255; blue, 255 }  ,draw opacity=1 ][fill={rgb, 255:red, 255; green, 255; blue, 255 }  ,fill opacity=1 ] (16.6,25.65) .. controls (16.6,24.55) and (17.5,23.65) .. (18.6,23.65) .. controls (19.71,23.65) and (20.6,24.55) .. (20.6,25.65) .. controls (20.6,26.75) and (19.71,27.65) .. (18.6,27.65) .. controls (17.5,27.65) and (16.6,26.75) .. (16.6,25.65) -- cycle ;
\draw    (24,32) -- (24.01,40) ;
\draw [color={rgb, 255:red, 191; green, 97; blue, 106 }  ,draw opacity=1 ]   (40,8) -- (40,16) ;
\draw    (24,8) -- (24,16) ;
\draw    (40,32) -- (40.01,40) ;
\draw  [color={rgb, 255:red, 255; green, 255; blue, 255 }  ,draw opacity=1 ][fill={rgb, 255:red, 255; green, 255; blue, 255 }  ,fill opacity=1 ] (27.2,22.67) .. controls (27.2,21.56) and (28.1,20.67) .. (29.2,20.67) .. controls (30.31,20.67) and (31.2,21.56) .. (31.2,22.67) .. controls (31.2,23.77) and (30.31,24.67) .. (29.2,24.67) .. controls (28.1,24.67) and (27.2,23.77) .. (27.2,22.67) -- cycle ;
\draw [color={rgb, 255:red, 0; green, 0; blue, 0 }  ,draw opacity=1 ]   (24,16) .. controls (23.72,24.14) and (40,24.43) .. (40,32) ;
\draw [color={rgb, 255:red, 0; green, 0; blue, 0 }  ,draw opacity=1 ]   (8,16) .. controls (7.72,24.14) and (24,24.43) .. (24,32) ;
\end{tikzpicture}}} 
\usepackage{cleveref} %

\usepackage{hyperref}
\usepackage{cleveref}

\newcommand{\renewtheorem}[1]{%
  \expandafter\let\csname #1\endcsname\relax
  \expandafter\let\csname c@#1\endcsname\relax
  \expandafter\let\csname end#1\endcsname\relax
  \newtheorem{#1}%
}

\theoremstyle{plain}

\renewtheorem{thm}{Theorem}[section]
\renewtheorem{cor}[thm]{Corollary}
\renewtheorem{lem}[thm]{Lemma}
\renewtheorem{prop}[thm]{Proposition}
\renewtheorem{asm}[thm]{Assumption}

\theoremstyle{definition}

\renewtheorem{rem}[thm]{Remark}
\renewtheorem{rems}[thm]{Remarks}
\renewtheorem{exa}[thm]{Example}
\renewtheorem{exas}[thm]{Examples}
\renewtheorem{defi}[thm]{Definition}
\renewtheorem{conv}[thm]{Convention}
\renewtheorem{conj}[thm]{Conjecture}
\renewtheorem{prob}[thm]{Problem}
\renewtheorem{oprob}[thm]{Open Problem}
\renewtheorem{algo}[thm]{Algorithm}
\renewtheorem{obs}[thm]{Observation}
\renewtheorem{qu}[thm]{Question}
\renewtheorem{fact}[thm]{Fact}
\renewtheorem{pty}[thm]{Property}

\newcommand\p[1]{#1} 
\renewcommand{\Run}{\colorPre{\operatorname{Run}}}

\newcommand\blackComultiplication{
\tikzset{every picture/.style={line width=0.85pt}} %
\begin{tikzpicture}[x=0.75pt,y=0.75pt,yscale=-0.4,xscale=0.4,baseline=-15pt,rotate=90,transform shape]
\draw    (40,39.92) -- (27,40) ;
\draw    (50,50.01) .. controls (40.47,50.32) and (41.08,47.82) .. (40,39.92) ;
\draw    (40,39.92) .. controls (42.58,30.82) and (40.87,29.92) .. (50,30.01) ;
\draw  [fill={rgb, 255:red, 0; green, 0; blue, 0 }  ,fill opacity=1 ] (36,39.92) .. controls (36,37.71) and (37.79,35.92) .. (40,35.92) .. controls (42.21,35.92) and (44,37.71) .. (44,39.92) .. controls (44,42.13) and (42.21,43.92) .. (40,43.92) .. controls (37.79,43.92) and (36,42.13) .. (36,39.92) -- cycle ;
\end{tikzpicture}
}

\newcommand\blackUnit{
\tikzset{every picture/.style={line width=0.85pt}} %
\begin{tikzpicture}[x=0.75pt,y=0.75pt,yscale=-0.4,xscale=0.4,baseline=8pt,rotate=270,transform shape]
\draw    (40,39.92) -- (27,40) ;
\draw  [fill={rgb, 255:red, 0; green, 0; blue, 0 }  ,fill opacity=1 ] (36,39.92) .. controls (36,37.71) and (37.79,35.92) .. (40,35.92) .. controls (42.21,35.92) and (44,37.71) .. (44,39.92) .. controls (44,42.13) and (42.21,43.92) .. (40,43.92) .. controls (37.79,43.92) and (36,42.13) .. (36,39.92) -- cycle ;
\end{tikzpicture}
}

\newcommand\whiteComonoidUnit{
\tikzset{every picture/.style={line width=0.85pt}} %
\begin{tikzpicture}[x=0.75pt,y=0.75pt,yscale=-0.4,xscale=0.4,baseline=-15pt,rotate=90,transform shape]
  \tikzset{every picture/.style={line width=0.85pt}} %
  \draw  (40,39.92) -- (27,40);
  \draw  [fill={rgb, 255:red, 255; green, 255; blue, 255}  ,fill opacity=1 ] (36,39.92) .. controls (36,37.71) and (37.79,35.92) .. (40,35.92) .. controls (42.21,35.92) and (44,37.71) .. (44,39.92) .. controls (44,42.13) and (42.21,43.92) .. (40,43.92) .. controls (37.79,43.92) and (36,42.13) .. (36,39.92) -- cycle ;
\end{tikzpicture}
}

\setlist[itemize]{leftmargin=10mm}
\setlist[enumerate]{leftmargin=10mm}

\title{String Diagrams for Premonoidal Categories}
\author[Román]{Mario Rom\'an\lmcsorcid{0000-0003-3158-1226}}[a]
\address{University of Oxford and Tallinn University of Technology}
\email{mario.roman.garcia@cs.ox.ac.uk}
\thanks{Mario Rom\'an and Pawe{\l} Sobociński were supported by the
Estonian Research Council grant PRG1210 and
 by the Advanced Research + Invention Agency
(ARIA) Safeguarded AI Programme. Pawe{\l} Sobociński was additionally supported by
European Union and Estonian Research Council via project TEM-TA5, by the Estonian Research Council funded Estonian Centre of Excellence in Artificial Intelligence project TK213U9 and by the European Union under Grant Agreement No. 101087529.
Mario Román was additionally supported
by the Air Force Office of Scientific Research (AFOSR) award number
FA9550-21-1-0038. This manuscript is an extended version of
``Promonads and String Diagrams for Effectful Categories'' by the first-named
author, presented at Applied Category Theory 2022.} %
\author[Sobociński]{Pawe{\l} Sobociński\lmcsorcid{0000-0002-7992-9685}}[b]
\address{Tallinn University of Technology}
\email{pawel.sobocinski@taltech.ee}

\makeatletter
\def\@copyrightspace{\relax}
\makeatother

\begin{document}

\begin{abstract}
  Premonoidal categories are monoidal categories without the interchange law while effectful categories are premonoidal categories with a chosen monoidal subcategory of interchanging morphisms. In the same sense that string diagrams—pioneered by Joyal and Street—are an internal language for monoidal categories, we show that string diagrams with an added ``runtime object''—pioneered by Alan Jeffrey—are an internal language for effectful categories and can be used as string diagrams for effectful, premonoidal and Freyd categories.
\end{abstract}

\maketitle

\section{Introduction}

\p{ Graphical programming languages have been of perennial interest to
  programmers: they are intuitive even for end-users without software
  development training, and they are closer to the digrammatic descriptions
  found across multiple branches of engineering \cite{kuhail21:visual}. While
  classical programming semantics employs categorical structures such as
  \emph{monads} and \emph{premonoidal categories} \cite{moggi91,power97},
  graphical programming semantics is traditionally a semi-formal field. This
  manuscript defends that graphical programming languages can be given semantics
  with the same formal techniques as their textual counterparts. }

\p{ Indeed, category theory has fostered two successful applications that are
  rarely combined: monoidal string diagrams \cite{joyal91} and functional
  programming semantics \cite{moggi91}. String diagrams are used to describe
  about quantum transformations \cite{abramsky2009categorical}, relational
  queries \cite{bonchi18}, and even computability \cite{pavlovic13}; at the same
  time, proof nets and the geometry of interaction \cite{girard89,blute96} have
  been widely applied in computer science \cite{abramsky02,hoshino14}. On the
  other hand, monads and comonads, Kleisli categories and
  \premonoidalCategories{} are used to explain effectful functional programming
  \cite{hughes00,jacobs09,moggi91,power99:freyd,uustalu2008comonadic}. Although
  Freyd categories with a cartesian base \cite{power02} are commonly used, it is
  also possible to consider non-cartesian Freyd categories \cite{staton13},
  which are called \emph{effectful categories}. }

\p{ These applications are well-known. However, some foundational results in the
  intersection between string diagrams, \premonoidalCategories{} and
  \effectfulCategories{} are missing in the literature. This manuscript
  contributes one such result: \emph{we introduce string diagrams for
  premonoidal and effectful categories}. Jeffrey \cite{jeffrey97} was the first
  to employ string diagrams of \premonoidalCategories{}, and we are indebted to
  his and Schweimeier's expositions \cite{schweimeier01}. Jeffrey's technique
  consisted in introducing an extra wire---which we call the
  \emph{runtime}---that prevents some morphisms from interchanging.

  We promote this technique into a result about the construction of free
  premonoidal and free \effectfulCategories{}: the free \premonoidalCategory{}
  can be constructed in terms of the free monoidal category with an additional wire.
  For this construction, we rely on Joyal and Street's  analogue result on
  string diagrams for monoidal categories \cite{joyal91}. Our slogan, which
  constitutes the statement of Theorem \ref{theorem:runtime-as-a-resource}, says
  that
  \begin{quote}
    \emph{Premonoidal categories are monoidal categories with a runtime.}
  \end{quote}
}

\subsection{Synopsis}

\p{ \Cref{sec:premonoidalcats,sec:effectfulcats} contain mostly preliminary
  material on premonoidal, Freyd and \effectfulCategories{}. \Cref{sec:runtime}
  recalls string diagrams for monoidal categories and constructs string diagrams
  for \effectfulCategories{}. \Cref{sec:stringPremonoidal} describes string
  diagrams for \premonoidalCategories{}. }

\subsection{Contributions}
Our main original contribution is in \Cref{sec:runtime}; we prove that
\effectfulCategories{} are monoidal categories with runtime via an adjunction
(Theorem \ref{theorem:runtime-as-a-resource}). We immediately obtain string diagrams
for \effectfulCategories{} (\Cref{cor:string-effectfuls}). Our second
contribution is an adjunction constructing string diagrams for
\premonoidalCategories{} (\Cref{thm:adjunction-string-premonoidals}).

\section{Premonoidal and Effectful Categories}
\label{sec:premonoidal}

\subsection{Premonoidal categories}
\label{sec:premonoidalcats}
\PremonoidalCategories{} are monoidal categories without the \emph{interchange law},
$(\fm \tensor \im) \comp (\im \tensor \gm) \neq (\im \tensor \gm) \comp (\fm \tensor \im)$.
This means that we cannot tensor any two arbitrary morphisms, $(\fm \tensor \gm)$, without explicitly stating which one is to be composed first, $(\fm \tensor \im) \comp (\im \tensor \gm)$ or $(\im \tensor \gm) \comp (\fm \tensor \im)$, and the two compositions are not equivalent (\Cref{fig:noninterchange}).
  \begin{figure}[H]
    \centering

\tikzset{every picture/.style={line width=0.75pt}} %

\begin{tikzpicture}[x=0.75pt,y=0.75pt,yscale=-1,xscale=1]
\draw   (4,20) -- (28,20) -- (28,32) -- (4,32) -- cycle ;
\draw    (16,4) -- (16,20) ;
\draw   (36,40) -- (60,40) -- (60,52) -- (36,52) -- cycle ;
\draw    (48,4) -- (48,40) ;
\draw    (48,52) -- (48,68) ;
\draw    (16,32) -- (16,68) ;
\draw   (100,40) -- (124,40) -- (124,52) -- (100,52) -- cycle ;
\draw    (144,4) -- (144,20) ;
\draw    (112,4) -- (112,40) ;
\draw    (112,52) -- (112,68) ;
\draw    (144,32) -- (144,68) ;
\draw  [draw opacity=0] (64,28) -- (96,28) -- (96,44) -- (64,44) -- cycle ;
\draw   (132,20) -- (156,20) -- (156,32) -- (132,32) -- cycle ;

\draw (16,26) node  [font=\footnotesize]  {$f$};
\draw (48,46) node  [font=\footnotesize]  {$g$};
\draw (112,46) node  [font=\footnotesize]  {$f$};
\draw (80,36) node  [font=\footnotesize]  {$\neq$};
\draw (144,26) node  [font=\footnotesize]  {$g$};

\end{tikzpicture}
     \caption{The interchange law does not hold in a premonoidal category.}
    \label{fig:noninterchange}
  \end{figure}

In technical terms, the tensor of a \premonoidalCategory{} $(\tensor) \colon \catC \times \catC \to \catC$ is not a functor, but only what is called a \emph{\sesquifunctor{}}: independently functorial on each variable. Indeed, tensoring with any identity, which is sometimes referred to as \emph{whiskering}, is a functor $(\bullet \tensor \im) \colon \catC \to \catC$, but there is no functor $(\bullet \tensor \bullet) \colon \catC \times \catC \to \catC$.

A good motivation for dropping the interchange law can be found when describing transformations that affect some global state.
These effectful processes should not interchange in general, because the order in which we modify the global state is meaningful.
For instance, in the Kleisli category of the \emph{writer monad}, $(\Sigma^{\ast} \times \bullet) ፡ \Set \to \Set$ for some alphabet $\Sigma \in \Set$, we can consider the function $\mathsf{print} \colon \Sigma^{\ast} \to \Sigma^{\ast} \times 1$, which is an effectful map from $\Sigma^\ast$ to $1$. The order in which we ``print'' does matter (\Cref{fig:writermonad}).
  \begin{figure}[H]
    \centering

\tikzset{every picture/.style={line width=0.75pt}} %

\begin{tikzpicture}[x=0.75pt,y=0.75pt,yscale=-1,xscale=1]
\draw   (4,4) -- (52,4) -- (52,20) -- (4,20) -- cycle ;
\draw   (56,4) -- (104,4) -- (104,20) -- (56,20) -- cycle ;
\draw   (4,28) -- (52,28) -- (52,44) -- (4,44) -- cycle ;
\draw   (56,44) -- (104,44) -- (104,60) -- (56,60) -- cycle ;
\draw    (28,20) -- (28,28) ;
\draw    (80,20) -- (80,44) ;
\draw  [draw opacity=0] (104,24) -- (136,24) -- (136,40) -- (104,40) -- cycle ;
\draw   (132,4) -- (204,4) -- (204,20) -- (132,20) -- cycle ;
\draw   (144,36) -- (192,36) -- (192,52) -- (144,52) -- cycle ;
\draw    (168,20) -- (168,36) ;
\draw  [draw opacity=0] (200,24) -- (232,24) -- (232,40) -- (200,40) -- cycle ;
\draw   (228,4) -- (300,4) -- (300,20) -- (228,20) -- cycle ;
\draw   (240,36) -- (288,36) -- (288,52) -- (240,52) -- cycle ;
\draw    (264,20) -- (264,36) ;
\draw  [draw opacity=0] (300,24) -- (332,24) -- (332,40) -- (300,40) -- cycle ;
\draw   (332,4) -- (380,4) -- (380,20) -- (332,20) -- cycle ;
\draw   (384,4) -- (432,4) -- (432,20) -- (384,20) -- cycle ;
\draw   (384,28) -- (432,28) -- (432,44) -- (384,44) -- cycle ;
\draw   (332,44) -- (380,44) -- (380,60) -- (332,60) -- cycle ;
\draw    (408,20) -- (408,28) ;
\draw    (356,20) -- (356,44) ;

\draw (28,12) node  [font=\scriptsize]  {``$hello$''};
\draw (80,12) node  [font=\scriptsize]  {``$world$''};
\draw (28,36) node  [font=\scriptsize]  {$print$};
\draw (80,52) node  [font=\scriptsize]  {$print$};
\draw (120,32) node  [font=\footnotesize]  {$=$};
\draw (168,12) node  [font=\scriptsize]  {``$helloworld$''};
\draw (168,44) node  [font=\scriptsize]  {$print$};
\draw (216,32) node  [font=\footnotesize]  {$\neq$};
\draw (264,12) node  [font=\scriptsize]  {``$worldhello$''};
\draw (264,44) node  [font=\scriptsize]  {$print$};
\draw (316,32) node  [font=\footnotesize]  {$=$};
\draw (356,12) node  [font=\scriptsize]  {``$hello$''};
\draw (408,12) node  [font=\scriptsize]  {``$world$''};
\draw (408,36) node  [font=\scriptsize]  {$print$};
\draw (356,52) node  [font=\scriptsize]  {$print$};

\end{tikzpicture}
     \caption{Writing does not interchange.}
    \label{fig:writermonad}
  \end{figure}

Not surprisingly, the paradigmatic examples of \premonoidalCategories{} are the
Kleisli categories of Set-based monads $T \colon \Set \to \Set$  (more
generally, of strong monads), which fail to be monoidal unless the monad itself
is commutative \cite{guitart1980tenseurs,power97,power99:freyd}.
Intuitively, the morphisms are ``effectful'', and these effects do not always
commute.

However, we may still want to allow some morphisms to interchange.
For instance, apart from asking the same associators and unitors of monoidal categories to exist, we ask them to be \emph{central}: that means that they interchange with any other morphism.
This notion of centrality forces us to write the definition of \premonoidalCategory{} in two different steps: first, we introduce the minimal setting in which centrality can be considered (\emph{binoidal} categories \cite{power99:freyd}) and then we use that setting to bootstrap the full definition of \premonoidalCategory{} with central coherence morphisms.

\begin{defi}[Binoidal category]
  \defining{linknoidalcategory}{}
  A \emph{binoidal category} is a category $\catC$ endowed with an object $I \in \catC$ and an object $A \tensor B$ for each $A \in \catC$ and $B \in \catC$.
  There are whiskering functors
  $(A \tensor \bullet) \colon \catC \to \catC
    \mbox{, and }
    (\bullet \tensor B) \colon \catC \to \catC$
  that coincide on $(A \tensor B)$, even if $(\bullet \tensor \bullet)$ is not necessarily itself a functor.
\end{defi}

Again, this means that we can tensor with identities (whiskering), functorially; but we cannot tensor two arbitrary morphisms: the interchange law stops being true in general.

\begin{defi}[Centre, central morphism]
The \emph{centre} of a binoidal category, $\zentre(\catC)$, is the wide
subcategory of morphisms that do satisfy the interchange law with any other
morphism. That is, $f \colon A \to B$ is \emph{central} if, for each $g \colon
A' \to B'$,
\begin{align*}
  & (f \tensor \im_{A'}) \comp (\im_{B} \tensor g)
   = (\im_{A} \tensor g) \comp (f \tensor \im_{B'}), \mbox{ and } \\
  & (\im_{A'} \tensor f) \comp (g \tensor \im_{B})
   = (g \tensor \im_{A}) \comp (\im_{B'} \tensor f).
\end{align*}
\end{defi}

\begin{defi}[Premonoidal category]
  \defining{linkpremonoidalcategory}
  A \emph{premonoidal category} is a \noidalCategory{} $(\catC,\tensor,I)$ together with the following coherence isomorphisms
  $\alpha_{A,B,C} \colon A \tensor (B \tensor C) \to (A \tensor B) \tensor C$, $\rho_{A} \colon A \tensor I \to A$ and $\lambda_{A} \colon I \tensor A \to A$ which are central, natural \emph{separately at each given component}, and satisfy the pentagon and triangle equations.

  A \premonoidalCategory{} is \emph{strict} when these coherence morphisms are identities.
  A \premonoidalCategory{} is moreover \emph{symmetric} when it is endowed with a coherence isomorphism $\sigma_{A,B} \colon A \tensor B \to B \tensor A$ that is central and natural at each given component, and satisfies the symmetry condition and hexagon equations.
\end{defi}

\begin{rem}
  The coherence theorem of monoidal categories still holds for \premonoidalCategories{}: every premonoidal is equivalent to a strict one.
  We will construct the free strict \premonoidalCategory{} using string diagrams.
  However, the usual string diagrams for monoidal categories need to be restricted: in \premonoidalCategories{}, we cannot consider two morphisms in parallel unless one of the two is \emph{central}.
\end{rem}

\subsection{Effectful and Freyd categories}
\label{sec:effectfulcats}

\PremonoidalCategories{} immediately present a problem: what are the strong
premonoidal functors? If we want them to compose, they should preserve
centrality of the coherence morphisms (so that the central coherence morphisms
of $F \comp G$ are these of $F$ after applying $G$), but naively asking them to
preserve all central morphisms would rule out otherwise valid
functors\footnote{For instance, the inclusion of a commutative monoid into a
non-commutative monoid does not necessarily preserve the monoid centre; in turn, this
induces an inclusion between the Kleisli categories of their writer monads that
does not necessarily preserve the premonoidal centre.}~\cite{staton13}. The solution is to
explicitly choose some central morphisms that represent ``pure'' computations.
These do not need to form the whole centre: it could be that some morphisms
considered \emph{effectful} just ``happen'' to fall in the centre of the
category, while we do not ask our functors to preserve them. This is the
well-studied notion of a \emph{non-cartesian Freyd category}, which we shorten
to \emph{effectful monoidal category} or \emph{effectful category}.

Thus, \effectfulCategories{} are \premonoidalCategories{} endowed with a chosen
family of central morphisms. These central morphisms are called \pure{}
morphisms, constrasting with the general, non-central, morphisms that fall
outside this family, which we call \effectful{}.

\begin{defi}
  \defining{linkeffectful}{}
  \defining{linkfreyd}{}
  An \emph{effectful category}
  is an identity-on-objects functor $\colorMon{\baseV} \to \colorPre{\catC}$ from a monoidal category $\baseV$ (the \pure{} morphisms, or ``values'') to a \premonoidalCategory{} $\catC$ (the \effectful{} morphisms, or ``computations''), that strictly preserves all of the premonoidal structure and whose image is central. It is \emph{strict} when both are. A \emph{Freyd category} \cite{levy2004} is an \effectfulCategory{} where the \pure{} morphisms form a cartesian monoidal category.
\end{defi}

\begin{rem}[Terminology]
  The name ``Freyd category'' sometimes assumes cartesianity of the pure morphisms, but it is sometimes also used for the general case.
To avoid this clash of nomenclature we reserve ``effectful categories'' for the general case and ``Freyd categories'' for the cartesian case.
There exists also the more fine-grained notion of ``Cartesian effect category'' \cite{dumas11}, which generalizes Freyd categories.
\end{rem}

\EffectfulCategories{} solve the problem of defining premonoidal functors: a functor between \effectfulCategories{} only needs to preserve the \pure{} morphisms.
We are not losing expressivity: \premonoidalCategories{} are effectful with their own centre, $\colorMon{\zentre}(\catC) \to \catC$.
From now on, we focus on \effectfulCategories{}.

\begin{defi}[Effectful functor]
  \defining{linkeffectfulfunctor}
  Let $\cbaseV \to \ccatC$ and $\cbaseW \to \ccatD$ be \effectfulCategories{}.
  An \emph{effectful functor} is a quadruple  $(F,F_{0},\varepsilon, \mu)$ consisting of a functor $F \colon \ccatC \to \ccatD$ and a functor $F_{0} \colon \cbaseV \to \cbaseW$ making the square commute,
  and two natural and pure isomorphisms $\varepsilon \colon J \cong F(I)$ and $\mu \colon F(A \otimes B) \cong F(A) \otimes F(B)$ such that they make $F_{0}$ a monoidal functor. It is \emph{strict} if these are identities.
  Strict \effectfulCategories{} and functors form a category, $\EffCatStr$.
\end{defi}

When drawing string diagrams in an \effectfulCategory{}, we could use two
different colours to declare if we are depicting either a value or a computation
(\Cref{fig:prehelloworld}).

  \begin{figure}[H]
    \centering

\tikzset{every picture/.style={line width=0.75pt}} %

\begin{tikzpicture}[x=0.75pt,y=0.75pt,yscale=-1,xscale=1]
\draw   (4,4) -- (52,4) -- (52,20) -- (4,20) -- cycle ;
\draw   (56,4) -- (104,4) -- (104,20) -- (56,20) -- cycle ;
\draw  [color={rgb, 255:red, 191; green, 97; blue, 106 }  ,draw opacity=1 ] (4,28) -- (52,28) -- (52,44) -- (4,44) -- cycle ;
\draw  [color={rgb, 255:red, 191; green, 97; blue, 106 }  ,draw opacity=1 ] (56,44) -- (104,44) -- (104,60) -- (56,60) -- cycle ;
\draw    (28,20) -- (28,28) ;
\draw    (80,20) -- (80,44) ;
\draw  [draw opacity=0] (104,24) -- (136,24) -- (136,40) -- (104,40) -- cycle ;
\draw   (136,4) -- (184,4) -- (184,20) -- (136,20) -- cycle ;
\draw   (188,4) -- (236,4) -- (236,20) -- (188,20) -- cycle ;
\draw  [color={rgb, 255:red, 191; green, 97; blue, 106 }  ,draw opacity=1 ] (188,28) -- (236,28) -- (236,44) -- (188,44) -- cycle ;
\draw  [color={rgb, 255:red, 191; green, 97; blue, 106 }  ,draw opacity=1 ] (136,44) -- (184,44) -- (184,60) -- (136,60) -- cycle ;
\draw    (212,20) -- (212,28) ;
\draw    (160,20) -- (160,44) ;

\draw (28,12) node  [font=\scriptsize]  {``$hello$''};
\draw (80,12) node  [font=\scriptsize]  {``$world$''};
\draw (28,36) node  [font=\scriptsize,color={rgb, 255:red, 191; green, 97; blue, 106 }  ,opacity=1 ]  {$print$};
\draw (80,52) node  [font=\scriptsize,color={rgb, 255:red, 191; green, 97; blue, 106 }  ,opacity=1 ]  {$print$};
\draw (120,32) node  [font=\footnotesize]  {$\neq$};
\draw (160,12) node  [font=\scriptsize]  {``$hello$''};
\draw (212,12) node  [font=\scriptsize]  {``$world$''};
\draw (212,36) node  [font=\scriptsize,color={rgb, 255:red, 191; green, 97; blue, 106 }  ,opacity=1 ]  {$print$};
\draw (160,52) node  [font=\scriptsize,color={rgb, 255:red, 191; green, 97; blue, 106 }  ,opacity=1 ]  {$print$};

\end{tikzpicture}
    \caption{``Hello world'' is not ``world hello''.}
    \label{fig:prehelloworld}
  \end{figure}

Here, the values \colorMon{``hello''} and \colorMon{``world''} satisfy the
interchange law as in an ordinary monoidal category. However, the effectful
computation \colorPre{``print''} does not need to satisfy the interchange law.
String diagrams like these can be found in the work of Alan Jeffrey
\cite{jeffrey97}. Jeffrey presents a clever mechanism to graphically depict the
failure of interchange: all effectful morphisms need to have a control wire as
an input and output. This control wire needs to be passed around to all the
computations in order, and it prevents them from interchanging.

  \begin{figure}[H]
    \centering

\tikzset{every picture/.style={line width=0.75pt}} %

\begin{tikzpicture}[x=0.75pt,y=0.75pt,yscale=-1,xscale=1]
\draw    (72,56) -- (72,68) ;
\draw [color={rgb, 255:red, 0; green, 0; blue, 0 }  ,draw opacity=1 ]   (46,28) .. controls (46.17,36.17) and (39.83,31.83) .. (40,40) ;
\draw  [color={rgb, 255:red, 0; green, 0; blue, 0 }  ,draw opacity=1 ] (24,12) -- (68,12) -- (68,28) -- (24,28) -- cycle ;
\draw  [color={rgb, 255:red, 0; green, 0; blue, 0 }  ,draw opacity=1 ] (72,12) -- (116,12) -- (116,28) -- (72,28) -- cycle ;
\draw  [color={rgb, 255:red, 191; green, 97; blue, 106 }  ,draw opacity=1 ] (12,40) -- (52,40) -- (52,56) -- (12,56) -- cycle ;
\draw  [color={rgb, 255:red, 191; green, 97; blue, 106 }  ,draw opacity=1 ] (44,68) -- (84,68) -- (84,84) -- (44,84) -- cycle ;
\draw  [color={rgb, 255:red, 0; green, 0; blue, 0 }  ,draw opacity=1 ] (204,12) -- (248,12) -- (248,28) -- (204,28) -- cycle ;
\draw  [color={rgb, 255:red, 191; green, 97; blue, 106 }  ,draw opacity=1 ] (192,40) -- (232,40) -- (232,56) -- (192,56) -- cycle ;
\draw [color={rgb, 255:red, 191; green, 97; blue, 106 }  ,draw opacity=1 ]   (12,4) -- (12,20) ;
\draw [color={rgb, 255:red, 191; green, 97; blue, 106 }  ,draw opacity=1 ]   (12,20) .. controls (12.17,31.83) and (23.83,27.83) .. (24,40) ;
\draw [color={rgb, 255:red, 191; green, 97; blue, 106 }  ,draw opacity=1 ]   (32,56) .. controls (32.17,69.17) and (56.17,56.5) .. (56,68) ;
\draw [color={rgb, 255:red, 0; green, 0; blue, 0 }  ,draw opacity=1 ]   (94,40) .. controls (94.5,49.17) and (71.83,46.5) .. (72,56) ;
\draw [color={rgb, 255:red, 0; green, 0; blue, 0 }  ,draw opacity=1 ]   (94,28) -- (94,40) ;
\draw [color={rgb, 255:red, 191; green, 97; blue, 106 }  ,draw opacity=1 ]   (64,84) -- (64,96) ;
\draw [color={rgb, 255:red, 191; green, 97; blue, 106 }  ,draw opacity=1 ]   (144,8) -- (144,28) ;
\draw [color={rgb, 255:red, 191; green, 97; blue, 106 }  ,draw opacity=1 ]   (144,28) .. controls (144.17,41.17) and (204.17,28.5) .. (204,40) ;
\draw [color={rgb, 255:red, 191; green, 97; blue, 106 }  ,draw opacity=1 ]   (204,56) .. controls (204,72.07) and (160.2,51.27) .. (160,68) ;
\draw [color={rgb, 255:red, 0; green, 0; blue, 0 }  ,draw opacity=1 ]   (226,28) .. controls (226.17,36.17) and (219.83,31.83) .. (220,40) ;
\draw [color={rgb, 255:red, 0; green, 0; blue, 0 }  ,draw opacity=1 ]   (178,44) .. controls (178.17,52.17) and (175.83,47.83) .. (176,56) ;
\draw [color={rgb, 255:red, 191; green, 97; blue, 106 }  ,draw opacity=1 ]   (168,84) -- (168,96) ;
\draw  [color={rgb, 255:red, 255; green, 255; blue, 255 }  ,draw opacity=1 ][fill={rgb, 255:red, 255; green, 255; blue, 255 }  ,fill opacity=1 ] (176,34.66) .. controls (176,33.56) and (176.9,32.66) .. (178,32.66) .. controls (179.1,32.66) and (180,33.56) .. (180,34.66) .. controls (180,35.77) and (179.1,36.66) .. (178,36.66) .. controls (176.9,36.66) and (176,35.77) .. (176,34.66) -- cycle ;
\draw    (178,28) -- (178,44) ;
\draw  [color={rgb, 255:red, 0; green, 0; blue, 0 }  ,draw opacity=1 ] (156,12) -- (200,12) -- (200,28) -- (156,28) -- cycle ;
\draw  [color={rgb, 255:red, 255; green, 255; blue, 255 }  ,draw opacity=1 ][fill={rgb, 255:red, 255; green, 255; blue, 255 }  ,fill opacity=1 ] (174,61.45) .. controls (174,60.35) and (174.9,59.45) .. (176,59.45) .. controls (177.1,59.45) and (178,60.35) .. (178,61.45) .. controls (178,62.56) and (177.1,63.45) .. (176,63.45) .. controls (174.9,63.45) and (174,62.56) .. (174,61.45) -- cycle ;
\draw    (176,56) -- (176,68) ;
\draw  [color={rgb, 255:red, 191; green, 97; blue, 106 }  ,draw opacity=1 ] (148,68) -- (188,68) -- (188,84) -- (148,84) -- cycle ;

\draw (46,20) node  [font=\scriptsize,color={rgb, 255:red, 0; green, 0; blue, 0 }  ,opacity=1 ]  {``$hello$''};
\draw (94,20) node  [font=\scriptsize,color={rgb, 255:red, 0; green, 0; blue, 0 }  ,opacity=1 ]  {``$world$''};
\draw (32,48) node  [font=\scriptsize,color={rgb, 255:red, 191; green, 97; blue, 106 }  ,opacity=1 ]  {$print$};
\draw (64,76) node  [font=\scriptsize,color={rgb, 255:red, 191; green, 97; blue, 106 }  ,opacity=1 ]  {$print$};
\draw (128,40) node  [font=\footnotesize]  {$\neq $};
\draw (178,20) node  [font=\scriptsize,color={rgb, 255:red, 0; green, 0; blue, 0 }  ,opacity=1 ]  {``$hello$''};
\draw (226,20) node  [font=\scriptsize,color={rgb, 255:red, 0; green, 0; blue, 0 }  ,opacity=1 ]  {``$world$''};
\draw (168,76) node  [font=\scriptsize,color={rgb, 255:red, 191; green, 97; blue, 106 }  ,opacity=1 ]  {$print$};
\draw (212,48) node  [font=\scriptsize,color={rgb, 255:red, 191; green, 97; blue, 106 }  ,opacity=1 ]  {$print$};

\end{tikzpicture}
    \caption{An additional wire prevents interchange.}
    \label{fig:helloworld}
  \end{figure}

A common interpretation of monoidal categories is as theories of resources. We
can interpret premonoidal categories as monoidal categories with an extra
resource---the \colorPre{``runtime''}---that needs to be passed to all
computations. The next section promotes Jeffrey's observation into a theorem.

\begin{rem}[Non-interchanging string diagrams]
  Could we avoid the extra runtime wire and simply keep track of which morphisms must not be interchanged?
  This is an easy solution and we employ it in \Cref{fig:prehelloworld}; unfortunately, it raises an important concern: we lose a ``locality of substitutions'' property that, intuitively, we expect string diagrams to satisfy.

  Explicitly, imagine that we know that a pure morphism arises as the composition of two effectful morphisms, $f = g ⨾ h$.
  We expect to be able to substitute $f$ by the pair of morphisms $g ⨾ h$ at whichever point in a string diagram without regarding the rest of the diagram, as we do with monoidal string diagrams. However, say that $f$ is tensored with an effectful morphism $k$; in this case, the relative order of the morphisms, $g$, $h$ and $k$, is meaningful and forces us to be careful about the position of the rest of the nodes on the diagram (see \Cref{fig:substituting-one}).
  \begin{figure}[ht]
    \centering

\tikzset{every picture/.style={line width=0.75pt}} %

\begin{tikzpicture}[x=0.75pt,y=0.75pt,yscale=-1,xscale=1]
\draw    (40,16) -- (40,36) ;
\draw    (40,48) -- (40,68) ;
\draw   (28,36) -- (52,36) -- (52,48) -- (28,48) -- cycle ;
\draw    (96,16) -- (96,24) ;
\draw    (96,36) -- (96,48) ;
\draw    (96,60) -- (96,68) ;
\draw  [draw opacity=0] (56,20) -- (80,20) -- (80,60) -- (56,60) -- cycle ;
\draw   (84,24) -- (108,24) -- (108,36) -- (84,36) -- cycle ;
\draw   (84,48) -- (108,48) -- (108,60) -- (84,60) -- cycle ;
\draw  [draw opacity=0] (108,20) -- (132,20) -- (132,60) -- (108,60) -- cycle ;
\draw  [draw opacity=0] (128,20) -- (228,20) -- (228,60) -- (128,60) -- cycle ;
\draw    (284,16) -- (284,36) ;
\draw    (284,48) -- (284,68) ;
\draw   (272,36) -- (296,36) -- (296,48) -- (272,48) -- cycle ;
\draw  [draw opacity=0] (296,20) -- (320,20) -- (320,60) -- (296,60) -- cycle ;
\draw  [draw opacity=0] (380,20) -- (404,20) -- (404,60) -- (380,60) -- cycle ;
\draw    (256,16) -- (256,36) ;
\draw    (256,48) -- (256,68) ;
\draw   (244,36) -- (268,36) -- (268,48) -- (244,48) -- cycle ;
\draw    (336,16) -- (336,36) ;
\draw    (336,48) -- (336,68) ;
\draw   (324,36) -- (348,36) -- (348,48) -- (324,48) -- cycle ;
\draw    (364,16) -- (364,24) ;
\draw    (364,36) -- (364,48) ;
\draw   (352,24) -- (376,24) -- (376,36) -- (352,36) -- cycle ;
\draw    (364,60) -- (364,68) ;
\draw   (352,48) -- (376,48) -- (376,60) -- (352,60) -- cycle ;

\draw (40,42) node  [font=\footnotesize]  {$f$};
\draw (96,30) node  [font=\footnotesize]  {$g$};
\draw (96,54) node  [font=\footnotesize]  {$h$};
\draw (68,40) node  [font=\footnotesize]  {$=$};
\draw (120,40) node  [font=\footnotesize]  {};
\draw (178,40) node  [font=\footnotesize]  {$does\ not\ imply$};
\draw (284,42) node  [font=\footnotesize]  {$f$};
\draw (308,40) node  [font=\footnotesize]  {$=$};
\draw (256,42) node  [font=\footnotesize]  {$k$};
\draw (336,42) node  [font=\footnotesize]  {$k$};
\draw (364,30) node  [font=\footnotesize]  {$g$};
\draw (364,54) node  [font=\footnotesize]  {$h$};

\end{tikzpicture}
    \caption{Substitution with non-interchanging string diagrams.}
    \label{fig:substituting-one}
  \end{figure}

  Considering runtime as an extra wire addresses this problem in two ways: \emph{(i)} it becomes obvious that pure and effectful morphisms are topologically different, and \emph{(ii)} it shows how the second substitution is not local and should not follow from the assumption (see \Cref{fig:substituting-two}).
  \begin{figure}[ht]
    \centering

\tikzset{every picture/.style={line width=0.75pt}} %

\begin{tikzpicture}[x=0.75pt,y=0.75pt,yscale=-1,xscale=1]
\draw [color={rgb, 255:red, 191; green, 97; blue, 106 }  ,draw opacity=1 ]   (348,16) -- (348,68) ;
\draw [color={rgb, 255:red, 191; green, 97; blue, 106 }  ,draw opacity=1 ]   (260,16) -- (260,68) ;
\draw [color={rgb, 255:red, 191; green, 97; blue, 106 }  ,draw opacity=1 ]   (92,16) -- (92,68) ;
\draw    (40,16) -- (40,36) ;
\draw    (40,48) -- (40,68) ;
\draw   (28,36) -- (52,36) -- (52,48) -- (28,48) -- cycle ;
\draw    (108,16) -- (108,24) ;
\draw    (108,36) -- (108,48) ;
\draw    (108,60) -- (108,68) ;
\draw  [draw opacity=0] (56,20) -- (80,20) -- (80,60) -- (56,60) -- cycle ;
\draw  [fill={rgb, 255:red, 255; green, 255; blue, 255 }  ,fill opacity=1 ] (84,24) -- (116,24) -- (116,36) -- (84,36) -- cycle ;
\draw  [fill={rgb, 255:red, 255; green, 255; blue, 255 }  ,fill opacity=1 ] (84,48) -- (116,48) -- (116,60) -- (84,60) -- cycle ;
\draw  [draw opacity=0] (128,20) -- (228,20) -- (228,60) -- (128,60) -- cycle ;
\draw    (284,16) -- (284,36) ;
\draw    (284,48) -- (284,68) ;
\draw   (272,36) -- (296,36) -- (296,48) -- (272,48) -- cycle ;
\draw  [draw opacity=0] (296,20) -- (320,20) -- (320,60) -- (296,60) -- cycle ;
\draw    (244,16) -- (244,36) ;
\draw    (244,48) -- (244,68) ;
\draw  [fill={rgb, 255:red, 255; green, 255; blue, 255 }  ,fill opacity=1 ] (236,36) -- (268,36) -- (268,48) -- (236,48) -- cycle ;
\draw    (332,16) -- (332,36) ;
\draw    (332,48) -- (332,68) ;
\draw  [fill={rgb, 255:red, 255; green, 255; blue, 255 }  ,fill opacity=1 ] (324,36) -- (356,36) -- (356,48) -- (324,48) -- cycle ;
\draw    (364,16) -- (364,20) ;
\draw    (364,32) -- (364,52) ;
\draw  [fill={rgb, 255:red, 255; green, 255; blue, 255 }  ,fill opacity=1 ] (340,20) -- (372,20) -- (372,32) -- (340,32) -- cycle ;
\draw    (364,64) -- (364,68) ;
\draw  [fill={rgb, 255:red, 255; green, 255; blue, 255 }  ,fill opacity=1 ] (340,52) -- (372,52) -- (372,64) -- (340,64) -- cycle ;
\draw [color={rgb, 255:red, 191; green, 97; blue, 106 }  ,draw opacity=1 ]   (20,16) -- (20,68) ;

\draw (40,42) node  [font=\footnotesize]  {$f$};
\draw (100,30) node  [font=\footnotesize]  {$g$};
\draw (100,54) node  [font=\footnotesize]  {$h$};
\draw (68,40) node  [font=\footnotesize]  {$=$};
\draw (178,40) node  [font=\footnotesize]  {$does\ not\ imply$};
\draw (284,42) node  [font=\footnotesize]  {$f$};
\draw (308,40) node  [font=\footnotesize]  {$=$};
\draw (252,42) node  [font=\footnotesize]  {$k$};
\draw (340,42) node  [font=\footnotesize]  {$k$};
\draw (356,26) node  [font=\footnotesize]  {$g$};
\draw (356,58) node  [font=\footnotesize]  {$h$};
\end{tikzpicture}     %
    \caption{Substitution with runtime string diagrams.}
    \label{fig:substituting-two}
  \end{figure}
\end{rem}

\section{String Diagrams for Effectful Categories}
\label{sec:runtime}

\subsection{String Diagrams for Monoidal Categories}
Soundness and completeness of string diagrams can be proven by showing that the morphisms of the monoidal category freely generated over a \polygraph{} of generators are string diagrams on these generators, quotiented by topological deformations \cite{joyal91}.
We will justify string diagrams for \premonoidalCategories{} by proving that the freely generated effectful category over a pair of polygraphs (for pure and effectful generators, respectively) can be constructed as the freely generated monoidal category over a particular polygraph that includes an extra wire. Let us start by recalling string diagrams for monoidal categories, following the work of Joyal and Street.

\begin{defi}
  \defining{linkpolygraph}{}\defining{linkPolygraph}{} A \emph{\polygraph{}}
  $\hyG$ (analogue of a \emph{multigraph} \cite{shulman2016categorical}) is
  given by a set of objects, $\hyGobj$, and a set of generators
  $\hyG(A_{0},\dots,A_{n};B_{0},\dots,B_{m})$ for any two sequences of objects
  $A_{0},\dots,A_{n}$ and $B_{0},\dots,B_{m}$. A morphism of \polygraphs{} $f
  \colon \hyG \to \hyH$ is a function between their object sets,
  $f_{\mathrm{obj}} \colon \hyGobj \to \hyHobj$, and a function between their
  corresponding morphism sets,
  \begin{align*}
     f_{A_{0},\dots,A_{n}; B_{0},\dots,B_{n}} & \colon  \hyG(A_{0},\dots,A_{n};B_{0},\dots,B_{m}) \\ & \to \hyH(f_{\mathrm{obj}}(A_{0}),\dots,f_{\mathrm{obj}}(A_{n});f_{\mathrm{obj}}(B_{0}),\dots,f_{\mathrm{obj}}(B_{m})).
  \end{align*}
  Polygraphs and polygraph morphisms form a category, $\PolyGraph$.
\end{defi}

  There exists an adjunction between \polygraphs{} and strict monoidal
  categories. Any monoidal category $\catC$ can be seen as a \polygraph{},
  $\Forget(\catC)$; the objects are those of the monoidal category; the edges,
  $\Forget(\catC)(A_{0},\dots, A_{n};B_{0}, \dots, B_{m})$, are the morphisms
  $\catC(A_{0} \tensor \dots \tensor A_{n},B_{0} \tensor \dots \tensor B_{m})$;
  we forget about composition and tensoring. Given a \polygraph{} $\hyG$, the
  free strict monoidal category $\MON( \hyG)$ is the strict monoidal category
  that has as morphisms the string diagrams over the generators in the
  \polygraph{}.

\begin{defi}
  \defining{linkStringDiagram}{}
  A \emph{string diagram} over a \emph{polygraph} $𝓖$ (or \emph{progressive
  plane graph} in the work of Joyal and Street \cite[Definition 1.1]{joyal91})
  is a graph $Γ$ embedded in the squared interval, up to planar isotopy, and
  such that
  \begin{enumerate}
    \item the boundary of the graph touches only the top and the bottom of the
    square, $δ Γ \subseteq \{0,1\} × [0,1]$;
    \item and the second projection is injective on each component of the graph
    without its vertices, $Γ - Γ₀$; this makes it acyclic and progressive.
  \end{enumerate}
  We call the components of $Γ - Γ₀$ \emph{wires}, $W$; we call the vertices of
  the graph \emph{nodes}, $Γ₀$. Wires must be labeled by the objects of the
  \polygraph{}, $o ፡ W → 𝓖_{obj}$, nodes must be labeled by the generators of
  the \polygraph{}, $m ፡ Γ₀ → 𝓖$; and each node must be connected to wires
  exactly typed by the objects of its generator---a string diagram must be
  well-typed.
\end{defi}

\begin{rem}[The symmetric case]
  Compare these with the string diagrams for symmetric monoidal categories \cite{selinger2010survey}. String diagrams for symmetric monoidal categories have a practical combinatorial characterization in terms of acyclic hypergraphs \cite{bonchi22:rewritingStrings}. This characterization would simplify our results, but we prefer to work for the full generality of (non-symmetric) monoidal categories.
\end{rem}

\begin{lem}
  String diagrams over a \polygraph{} $𝓖$ form a \monoidalCategory{}, which we call $\StringMon(𝓖)$. This determines a functor,
  $$\StringMon ፡ \PolyGraph → \MonCatStr.$$
\end{lem}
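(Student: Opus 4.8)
The plan is to adapt Joyal and Street's development of progressive plane graphs \cite{joyal91} to the present notion of \polygraph{}; the only difference is that here the top and bottom boundaries of a diagram carry \emph{ordered, typed} lists of wires rather than a single unstructured interface, so the bookkeeping is slightly heavier but the geometry is identical.

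First I would exhibit the strict monoidal structure on $\StringMon(\mathcal{G})$. Given string diagrams $\Gamma$ and $\Delta$ over $\mathcal{G}$ such that the bottom boundary of $\Gamma$ and the top boundary of $\Delta$ carry the same list of objects $B_{0},\dots,B_{m}$, their composite is obtained by rescaling $\Gamma$ into $[0,1] \times [1/2, 1]$, rescaling $\Delta$ into $[0,1] \times [0, 1/2]$, and gluing the $B_{i}$-wires along the line of height $1/2$; the tensor $\Gamma \tensor \Delta$ is obtained by rescaling $\Gamma$ into $[0, 1/2] \times [0,1]$ and $\Delta$ into $[1/2, 1] \times [0,1]$ and concatenating the boundary lists. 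The unit object is the empty list, and the identity on a list $A_{0},\dots,A_{n}$ is the node-free diagram of $n+1$ parallel wires. One checks directly that each output is again a graph embedded in the square meeting the boundary only in $\{0,1\} \times [0,1]$, acyclic and progressive (the rescalings preserve injectivity of the second projection on each wire), and well-typed (labels are copied verbatim and the glued wires match by hypothesis).

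Second I would verify that these operations descend to planar isotopy classes and satisfy the strict monoidal axioms. Independence of the chosen placement, and compatibility with isotopies of the factors, hold because the rescalings are themselves planar isotopies and isotopies fixing the boundary can be glued; this is the step that rests on Joyal and Street's deformation theorems, which I would cite rather than reprove. Associativity and unitality of composition are then vertical reparametrisations of the square (stacking three diagrams in thirds is isotopic to either bracketing), and likewise for $\tensor$ horizontally; the associators and unitors are \emph{literal} identities, because concatenation of object lists and side-by-side juxtaposition of diagrams are strictly associative and strictly unital for the empty list. The interchange law $(\Gamma \tensor \id) \comp (\id \tensor \Delta) = \Gamma \tensor \Delta = (\id \tensor \Delta) \comp (\Gamma \tensor \id)$ holds because all three diagrams differ only by sliding the nodes of $\Gamma$ vertically past those of $\Delta$ within disjoint vertical strips of the square --- a planar isotopy.

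Finally, for the functor: a morphism of \polygraphs{} $f \colon \mathcal{G} \to \mathcal{H}$ acts on a string diagram by keeping the underlying embedded graph and relabelling each wire $w$ by $f_{\mathrm{obj}}(o(w))$ and each node $v$ by $f(m(v))$; since $f$ respects the boundary types of generators the result is well-typed, and the construction visibly commutes with the rescale-and-glue operations above and fixes identities and the empty diagram, so it is a strict monoidal functor $\StringMon(f) \colon \StringMon(\mathcal{G}) \to \StringMon(\mathcal{H})$. Relabelling is functorial in $f$ (the identity polygraph morphism acts trivially, composites compose), which gives the functor $\StringMon \colon \PolyGraph \to \MonCatStr$. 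The one genuine difficulty is making the ``up to planar isotopy'' quotient rigorous --- that the operations are well-defined on isotopy classes and that the axioms, especially interchange and strict associativity, survive the quotient; this is precisely the content of the geometry of tensor calculus \cite{joyal91}, and the work specific to this lemma is only the routine check that those hypotheses still apply once the single interface is replaced by ordered input/output lists.
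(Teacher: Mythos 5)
Your proposal is correct and follows essentially the same route as the paper's own proof sketch: objects are lists of polygraph objects, morphisms are string diagrams with typed boundaries, composition and tensor are vertical and horizontal juxtaposition, and the isotopy-invariance subtleties are deferred to Joyal and Street. You additionally spell out the action on polygraph morphisms (relabelling) and the verification of the strict monoidal axioms, which the paper leaves implicit, but this is elaboration rather than a different argument.
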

\begin{proof}[Proof sketch]
  The objects of the category are lists of objects of the \polygraph{}, which we write as $[X₀ , \dots , Xₙ]$, for $Xᵢ ∈ 𝓖_{obj}$. These form a (free) monoid with concatenation and the empty list.

  Morphisms $[X₀ , \dots , Xₙ] → [Y₀ , \dots , Yₘ]$ are \stringDiagrams{} over the \polygraph{} $𝓖$ such that \emph{(i)} the ordered list of wires that touches the upper boundary is typed by $[X₀ , \dots , Xₙ]$, and \emph{(ii)} the ordered list of wires that touches the lower boundary is typed by $[Y₀ , \dots , Yₘ]$.
  \begin{figure}[ht]
    \centering

\tikzset{every picture/.style={line width=0.75pt}} %

\begin{tikzpicture}[x=0.75pt,y=0.75pt,yscale=-1,xscale=1]
\draw    (56,20) -- (56,32) ;
\draw    (56,48) -- (56,60) ;
\draw   (28,32) -- (60,32) -- (60,48) -- (28,48) -- cycle ;
\draw    (32,20) -- (32,32) ;
\draw    (32,48) -- (32,60) ;
\draw    (128,16) -- (128,24) ;
\draw    (128,56) -- (128,64) ;
\draw   (100,24) -- (132,24) -- (132,56) -- (100,56) -- cycle ;
\draw    (104,16) -- (104,24) ;
\draw    (104,56) -- (104,64) ;
\draw    (168,16) -- (168,24) ;
\draw    (168,56) -- (168,64) ;
\draw   (140,24) -- (172,24) -- (172,56) -- (140,56) -- cycle ;
\draw    (144,16) -- (144,24) ;
\draw    (144,56) -- (144,64) ;
\draw    (240,16) -- (240,20) ;
\draw   (212,20) -- (244,20) -- (244,36) -- (212,36) -- cycle ;
\draw    (216,16) -- (216,20) ;
\draw    (240,36) -- (240,44) ;
\draw   (212,44) -- (244,44) -- (244,60) -- (212,60) -- cycle ;
\draw    (216,36) -- (216,44) ;
\draw    (240,60) -- (240,64) ;
\draw    (216,60) -- (216,64) ;
\draw    (304,28) -- (304,52) ;
\draw    (284,28) -- (284,52) ;
\draw  [draw opacity=0] (96,4) -- (180,4) -- (180,80) -- (96,80) -- cycle ;
\draw  [draw opacity=0] (72,20) -- (100,20) -- (100,60) -- (72,60) -- cycle ;
\draw  [draw opacity=0] (208,4) -- (248,4) -- (248,80) -- (208,80) -- cycle ;
\draw  [draw opacity=0] (276,4) -- (316,4) -- (316,80) -- (276,80) -- cycle ;
\draw  [draw opacity=0] (184,20) -- (212,20) -- (212,60) -- (184,60) -- cycle ;
\draw  [draw opacity=0] (256,20) -- (284,20) -- (284,60) -- (256,60) -- cycle ;
\draw  [draw opacity=0] (304,20) -- (332,20) -- (332,60) -- (304,60) -- cycle ;
\draw  [draw opacity=0] (28,88) -- (60,88) -- (60,104) -- (28,104) -- cycle ;
\draw  [draw opacity=0] (116,88) -- (156,88) -- (156,104) -- (116,104) -- cycle ;
\draw  [draw opacity=0] (208,88) -- (248,88) -- (248,104) -- (208,104) -- cycle ;
\draw  [draw opacity=0] (276,88) -- (316,88) -- (316,104) -- (276,104) -- cycle ;

\draw (44,40) node  [font=\footnotesize]  {$f$};
\draw (116,40) node  [font=\footnotesize]  {$\alpha\phantom{'}$};
\draw (156,40) node  [font=\footnotesize]  {$\alpha'$};
\draw (228,28) node  [font=\footnotesize]  {$\alpha$};
\draw (228,52) node  [font=\footnotesize]  {$\beta $};
\draw (32,16.6) node [anchor=south] [inner sep=0.75pt]  [font=\tiny]  {$A_{0}$};
\draw (56,16.6) node [anchor=south] [inner sep=0.75pt]  [font=\tiny]  {$A_{n}$};
\draw (32,63.4) node [anchor=north] [inner sep=0.75pt]  [font=\tiny]  {$B_{0}$};
\draw (56,63.4) node [anchor=north] [inner sep=0.75pt]  [font=\tiny]  {$B_{m}$};
\draw (104,12.6) node [anchor=south] [inner sep=0.75pt]  [font=\tiny]  {$A_{0}$};
\draw (128,12.6) node [anchor=south] [inner sep=0.75pt]  [font=\tiny]  {$A_{n}$};
\draw (144,12.6) node [anchor=south] [inner sep=0.75pt]  [font=\tiny]  {$A'_{0}$};
\draw (168,12.6) node [anchor=south] [inner sep=0.75pt]  [font=\tiny]  {$A'_{n}$};
\draw (104,67.4) node [anchor=north] [inner sep=0.75pt]  [font=\tiny]  {$B_{0}$};
\draw (128,67.4) node [anchor=north] [inner sep=0.75pt]  [font=\tiny]  {$B_{m}$};
\draw (144,67.4) node [anchor=north] [inner sep=0.75pt]  [font=\tiny]  {$B'_{0}$};
\draw (168,67.4) node [anchor=north] [inner sep=0.75pt]  [font=\tiny]  {$B'_{m}$};
\draw (216,67.4) node [anchor=north] [inner sep=0.75pt]  [font=\tiny]  {$B_{0}$};
\draw (240,67.4) node [anchor=north] [inner sep=0.75pt]  [font=\tiny]  {$B_{m}$};
\draw (216,12.6) node [anchor=south] [inner sep=0.75pt]  [font=\tiny]  {$A_{0}$};
\draw (240,12.6) node [anchor=south] [inner sep=0.75pt]  [font=\tiny]  {$A_{n}$};
\draw (304,24.6) node [anchor=south] [inner sep=0.75pt]  [font=\tiny]  {$A_{n}$};
\draw (284,24.6) node [anchor=south] [inner sep=0.75pt]  [font=\tiny]  {$A_{0}$};
\draw (284,55.4) node [anchor=north] [inner sep=0.75pt]  [font=\tiny]  {$A_{0}$};
\draw (304,55.4) node [anchor=north] [inner sep=0.75pt]  [font=\tiny]  {$A_{n}$};
\draw (74,40) node  [font=\footnotesize]  {$;$};
\draw (186,40) node  [font=\footnotesize]  {$;$};
\draw (258,40) node  [font=\footnotesize]  {$;$};
\draw (318,40) node  [font=\footnotesize]  {$;$};
\draw (44,96) node  [font=\footnotesize]  {$f$};
\draw (136,96) node  [font=\footnotesize]  {$\alpha \otimes \alpha'$};
\draw (228,96) node  [font=\footnotesize]  {$\alpha ⨾ \beta $};
\draw (296,96) node  [font=\footnotesize]  {$id$};

\end{tikzpicture}

    \caption{Strict monoidal category of string diagrams.}
    \label{fig:string-diagrams-mon-cat}
  \end{figure}

  \Cref{fig:string-diagrams-mon-cat} describes the operations of the category.
  The parallel composition of two diagrams $α ፡ [X₀, … , Xₙ] → [Y₀, … , Yₘ]$ and $α' ፡ [X'_0, … , X'_{n'}] → [Y'_0, … , Y'_{m'}]$ is their horizontal juxtaposition.
  The sequential composition of two diagrams $α ፡ [X₀, …, Xₙ] → [Y₀, …, Yₘ]$ and $β ፡ [Y₀, …, Yₘ] → [Z₀, …, Zₖ]$ is the diagram obtained by vertical juxtaposition linking the outputs of the first to the inputs of the second.
  The identity on the object $[X₀, …, Xₙ]$ is given by a diagram containing $n$ identity wires labeled by these objects.
\end{proof}

\defining{linkForgetMon}{}
\begin{lem}
  Forgetting about the sequential and parallel composition defines a functor from \monoidalCategories{} to \polygraphs{}, $$\ForgetMon{} ፡ \MonCatStr → \PolyGraph.$$
\end{lem}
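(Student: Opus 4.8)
The plan is to unwind the two assignments—on objects and on morphisms—and then check the two functor axioms; everything here is bookkeeping, and the single point that needs attention is that strictness (of both the categories and the functors) is exactly what makes the construction land in $\PolyGraph$. The object assignment is the one already described in the prose preceding the lemma, so I would just restate it: a strict monoidal category $\catC$ is sent to the \polygraph{} $\ForgetMon(\catC)$ whose objects are the objects of $\catC$, and whose generator set $\ForgetMon(\catC)(A_0, \dots, A_n; B_0, \dots, B_m)$ is the hom-set $\catC(A_0 \tensor \dots \tensor A_n, \, B_0 \tensor \dots \tensor B_m)$, with the empty list read as the monoidal unit $I$. No structure is demanded of a \polygraph{}, so there is nothing to verify here: we simply discard the composition and the tensoring of morphisms.

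For the morphism part, given a strict monoidal functor $F \colon \catC \to \catD$ I would let $\ForgetMon(F)$ act as $F$ itself on objects and, on each generator set, act by $f \mapsto F(f)$. The one thing to check is that this typechecks: an element $f \in \ForgetMon(\catC)(A_0, \dots, A_n; B_0, \dots, B_m)$ is a morphism $A_0 \tensor \dots \tensor A_n \to B_0 \tensor \dots \tensor B_m$, so $F(f)$ is a morphism $F(A_0 \tensor \dots \tensor A_n) \to F(B_0 \tensor \dots \tensor B_m)$; and because $F$ preserves $\tensor$ and $I$ on the nose, these are literally $F(A_0) \tensor \dots \tensor F(A_n)$ and $F(B_0) \tensor \dots \tensor F(B_m)$, so $F(f)$ is an element of $\ForgetMon(\catD)(F(A_0), \dots, F(A_n); F(B_0), \dots, F(B_m))$, as the definition of a \polygraph{} morphism requires. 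This is the only place strictness is used, and it explains the choice of domain $\MonCatStr$: for a merely strong monoidal functor we would instead be handed coherence isomorphisms $F(A_0 \tensor \dots \tensor A_n) \cong F(A_0) \tensor \dots \tensor F(A_n)$, and $f \mapsto F(f)$ would no longer define a morphism of bare \polygraphs{}.

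Finally, functoriality: since $\id_\catC$ is the identity on objects and on morphisms, $\ForgetMon(\id_\catC)$ is the identity on objects and on every generator set, hence the identity \polygraph{} morphism; and since \polygraph{} morphisms compose componentwise—by composing the underlying object functions and the underlying generator functions—the fact that, for composable strict monoidal functors $F$ and $G$, the composite $G \comp F$ acts as $x \mapsto G(F(x))$ on both objects and morphisms gives $\ForgetMon(G \comp F) = \ForgetMon(G) \comp \ForgetMon(F)$ (and $G \comp F$ is again strict monoidal, so this is a legitimate composite in $\MonCatStr$). I do not expect a genuine obstacle; the proof is entirely routine. The only things that merit a sentence of care are the degenerate empty-list case—handled because strict monoidal functors send $I$ to $I$—and, as noted, the reliance on strictness so that no coherence data intrudes into the generator sets.
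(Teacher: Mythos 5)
Your proposal is correct and follows essentially the same route as the paper, which simply defines $\ForgetMon(\mathbb{C})(A_0,\dots,A_n;B_0,\dots,B_m) = \mathbb{C}(A_0 \otimes \dots \otimes A_n, B_0 \otimes \dots \otimes B_m)$ and notes that any strict monoidal functor induces a polygraph morphism. You fill in the details the paper leaves to the reader—in particular the observation that strictness of the functors is exactly what makes the induced map on generator sets typecheck—which is a worthwhile elaboration but not a different argument.
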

\begin{proof}
  Any \monoidalCategory{} $ℂ$ can be seen as a \polygraph{}, $\ForgetMon(ℂ)$, where the edges are determined by the morphisms,
  $$\ForgetMon(ℂ)(A_{0},\dots, A_{n};B_{0}, \dots, B_{m}) = ℂ(A₀ ⊗ … ⊗ Aₙ, B₀ ⊗ … ⊗ Bₘ),$$
  and we forget about composition and tensoring. It can be checked, by its definition, that any strict monoidal functor induces a homomorphism on the underlying \polygraphs{}.
\end{proof}

\begin{thm}[Joyal and Street, {{\cite[Theorem 2.3]{joyal91}}}]
  There exists an adjunction between \polygraphs{} and \strictMonoidalCategories{}, $\StringMon ⊣ \ForgetMon$.
  Given a \polygraph{} $𝓖$, the free \strictMonoidalCategory{}, $\StringMon(𝓖)$, is the \strictMonoidalCategory{} that has as morphisms the \stringDiagrams{} over the generators of the \polygraph{}; the underlying \polygraph{} determines the right adjoint.
\end{thm}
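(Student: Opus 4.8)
The plan is to exhibit the unit of the adjunction explicitly and then verify the universal property directly, deferring the single genuinely topological step to Joyal and Street.

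\emph{The unit.} For a \polygraph{} $𝓖$, I would define $\eta_{𝓖} \colon 𝓖 \to \ForgetMon(\StringMon(𝓖))$ to send each object $X$ to the singleton list $[X]$, and each generator $\gamma \in 𝓖(A_{0},\dots,A_{n};B_{0},\dots,B_{m})$ to the \stringDiagram{} consisting of a single node labelled by $\gamma$, carrying its prescribed input and output wires. This is visibly a morphism of \polygraphs{}, and it is natural in $𝓖$ because $\StringMon$ acts on a \polygraph{} morphism just by relabelling nodes and wires.

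\emph{The extension.} Given a \strictMonoidalCategory{} $\catC$ and a \polygraph{} morphism $F \colon 𝓖 \to \ForgetMon(\catC)$, I would build a strict monoidal functor $\widehat{F} \colon \StringMon(𝓖) \to \catC$ as follows. On objects there is no freedom: the list $[X_{0},\dots,X_{n}]$ must go to $F(X_{0}) \tensor \dots \tensor F(X_{n})$ and the empty list to the monoidal unit, which is functorial and strictly monoidal on objects because $\catC$ is strict. On a morphism, pick any presentation of the \stringDiagram{} as an iterated sequential and parallel composite of \emph{elementary} diagrams---single-node diagrams and identity wires, as in \Cref{fig:string-diagrams-mon-cat}---and send it to the corresponding composite of the morphisms $F(\gamma)$ and identities in $\catC$, formed with $\comp$ and $\tensor$.

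\emph{Well-definedness and universality.} The crux is that $\widehat{F}$ is independent of the chosen presentation, and this is exactly the content of \cite[Theorem 2.3]{joyal91}: two presentations of the same progressive plane graph are connected by a finite sequence of planar isotopies together with the elementary moves witnessing functoriality of whiskering and the exchange of whiskered morphisms, and each of these moves is an equation valid in any \strictMonoidalCategory{}. Equivalently, $\StringMon(𝓖)$ \emph{is} the free \strictMonoidalCategory{} on $𝓖$, so an assignment on generators extends uniquely along $\eta_{𝓖}$; I would invoke this deformation result rather than reprove the topology. Granting it, $\widehat{F}$ is a well-defined functor, it is strict monoidal by construction, and $\ForgetMon(\widehat{F}) \circ \eta_{𝓖} = F$ since a one-node diagram is its own elementary presentation. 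Uniqueness is then forced: any strict monoidal $G$ with $\ForgetMon(G) \circ \eta_{𝓖} = F$ agrees with $\widehat{F}$ on objects by strictness and on elementary diagrams by hypothesis, hence on all of $\StringMon(𝓖)$ because $G$ preserves $\comp$ and $\tensor$. This produces the natural bijection $\MonCatStr(\StringMon(𝓖),\catC) \cong \PolyGraph(𝓖,\ForgetMon(\catC))$, that is, $\StringMon \dashv \ForgetMon$; naturality in both arguments follows from the naturality of $\eta$. The entire difficulty sits in this well-definedness step, which is the Joyal--Street coherence and deformation theorem; everything else is routine bookkeeping with free constructions.
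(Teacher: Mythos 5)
The paper does not prove this statement at all: it is an attributed citation to Joyal and Street, and the text only records the shape of the adjunction. Your sketch is the standard argument and is essentially correct: unit by one-node diagrams, extension by decomposing a diagram into elementary composites, well-definedness and uniqueness deferred to the Joyal--Street deformation theorem. The only point worth flagging is that you write ``pick any presentation'' as if the existence of such a decomposition were free; in fact the statement that every progressive plane diagram \emph{admits} a presentation as an iterated sequential and parallel composite of one-node diagrams and identities is itself a nontrivial part of \cite[Theorem 2.3]{joyal91}, on a par with the invariance under isotopy that you do single out. Since you are deferring to that theorem anyway this is not a gap, but the deferral covers both existence and uniqueness of presentations, not only the latter.
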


\subsection{String Diagrams for Effectful Categories}
String diagrams \emph{for monoidal categories} arise from the adjunction between \polygraphs{} and monoidal categories. Let us translate this story to the effectful case:
we will construct a similar adjunction between \polygraphCouples{} and \effectfulCategories{}. We start by formally adding the runtime to a free monoidal category.

\begin{defi}[Effectful polygraph]
  \defining{linkpolygraphcouple}{}
  An \emph{\polygraphCouple{}} is a pair of \polygraphs{} $(\hyV,\hyG)$ sharing the same objects, $\hyV_{\mathrm{obj}} = \hyG_{\mathrm{obj}}$.
  A morphism of \polygraphCouples{} $(u,f) \colon (\hyV,\hyG) \to (\hyW,\hyH)$ is a pair of morphisms of \polygraphs{}, $u \colon \hyV \to \hyW$ and $f \colon \hyG \to \hyH$, such that they coincide on objects, $f_{\mathrm{obj}} = u_{\mathrm{obj}}$. Effectful polygraphs form a category, $\EffPolyGraph$.
\end{defi}

\begin{defi}[Runtime monoidal category]
  \defining{linkruntimepolygraph}{}
  Let $(\hyV,\hyG)$ be an \polygraphCouple{}. Its \emph{runtime monoidal
  category}, $\MONRUN(\hyV,\hyG)$, is the monoidal category freely generated
  from adding an extra object---the runtime, $\R$---to the input and output of
  every effectful generator in $\hyG$ (but not to those in $\hyV$), and letting
  that extra object be braided with respect to every other object of the
  category.

  In other words, it is the monoidal category freely generated by the following \polygraph{}, $\Run(\hyV,\hyG)$,
  (\Cref{fig:rungen}),
  assuming $A_{0},\dots,A_{n}$ and $B_{0},\dots,B_{m}$ are distinct from $\R$,
  \begin{itemize}
    \item $\obj{\Run(\hyV,\hyG)} = \obj{\hyG} + \{ \R \} = \obj{\hyV} + \{ \R \}$,
    \item $\Run(\hyV,\hyG)(\R,A_{0},\dots,A_{n};\R, B_{0},\dots,B_{n}) = \hyG(A_{0},\dots,A_{n}; B_{0},\dots,B_{n})$,
    \item $\Run(\hyV,\hyG)(A_{0},\dots,A_{n}; B_{0},\dots,B_{n}) = \hyV(A_{0},\dots,A_{n}; B_{0},\dots,B_{n})$,
    \item $\Run(\hyV,\hyG)(\R,A_{0};A_{0},\R) = \Run(\hyV,\hyG)(A_{0},\R;\R,A_{0}) = \{\sigma\}$,
  \end{itemize}
  with $\Run(\hyV,\hyG)$ empty in any other case, and quotiented by the braiding axioms for $\R$ (\Cref{fig:runaxiom}).
  \begin{figure}[H]
    \centering
    \vspace{-1ex}

\tikzset{every picture/.style={line width=0.75pt}} %

\begin{tikzpicture}[x=0.75pt,y=0.75pt,yscale=-1,xscale=1]
\draw    (28,16) -- (28,32) ;
\draw    (44,16) -- (44,32) ;
\draw [color={rgb, 255:red, 191; green, 97; blue, 106 }  ,draw opacity=1 ]   (16,16) -- (16,32) ;
\draw [color={rgb, 255:red, 191; green, 97; blue, 106 }  ,draw opacity=1 ]   (16,48) -- (16,64) ;
\draw    (28,48) -- (28,64) ;
\draw    (44,48) -- (44,64) ;
\draw    (100,16) -- (100,32) ;
\draw    (116,16) -- (116,32) ;
\draw    (100,48) -- (100,64) ;
\draw    (116,48) -- (116,64) ;
\draw    (172,16) -- (172,32) ;
\draw [color={rgb, 255:red, 191; green, 97; blue, 106 }  ,draw opacity=1 ]   (188,16) -- (188,32) ;
\draw [color={rgb, 255:red, 191; green, 97; blue, 106 }  ,draw opacity=1 ]   (172,48) -- (172,64) ;
\draw    (188,48) -- (188,64) ;
\draw [color={rgb, 255:red, 191; green, 97; blue, 106 }  ,draw opacity=1 ]   (188,32) .. controls (187.72,40.14) and (172,40.43) .. (172,48) ;
\draw  [color={rgb, 255:red, 255; green, 255; blue, 255 }  ,draw opacity=1 ][fill={rgb, 255:red, 255; green, 255; blue, 255 }  ,fill opacity=1 ] (178,40.25) .. controls (178,39.15) and (178.9,38.25) .. (180,38.25) .. controls (181.11,38.25) and (182,39.15) .. (182,40.25) .. controls (182,41.35) and (181.11,42.25) .. (180,42.25) .. controls (178.9,42.25) and (178,41.35) .. (178,40.25) -- cycle ;
\draw [color={rgb, 255:red, 0; green, 0; blue, 0 }  ,draw opacity=1 ]   (172,32) .. controls (171.72,40.14) and (188,40.43) .. (188,48) ;
\draw    (252,16) -- (252,32) ;
\draw [color={rgb, 255:red, 191; green, 97; blue, 106 }  ,draw opacity=1 ]   (236,16) -- (236,32) ;
\draw [color={rgb, 255:red, 191; green, 97; blue, 106 }  ,draw opacity=1 ]   (252,48) -- (252,64) ;
\draw    (236,48) -- (236,64) ;
\draw [color={rgb, 255:red, 191; green, 97; blue, 106 }  ,draw opacity=1 ]   (236,32) .. controls (236.29,40.14) and (252,40.43) .. (252,48) ;
\draw  [color={rgb, 255:red, 255; green, 255; blue, 255 }  ,draw opacity=1 ][fill={rgb, 255:red, 255; green, 255; blue, 255 }  ,fill opacity=1 ] (246,40.25) .. controls (246,39.15) and (245.1,38.25) .. (244,38.25) .. controls (242.9,38.25) and (242,39.15) .. (242,40.25) .. controls (242,41.35) and (242.9,42.25) .. (244,42.25) .. controls (245.1,42.25) and (246,41.35) .. (246,40.25) -- cycle ;
\draw [color={rgb, 255:red, 0; green, 0; blue, 0 }  ,draw opacity=1 ]   (252,32) .. controls (252.28,40.14) and (236,40.43) .. (236,48) ;
\draw   (8,32) -- (52,32) -- (52,48) -- (8,48) -- cycle ;
\draw   (92,32) -- (124,32) -- (124,48) -- (92,48) -- cycle ;
\draw  [draw opacity=0] (12,6) -- (20,6) -- (20,16) -- (12,16) -- cycle ;
\draw  [draw opacity=0] (24,6) -- (32,6) -- (32,16) -- (24,16) -- cycle ;
\draw  [draw opacity=0] (40,6) -- (48,6) -- (48,16) -- (40,16) -- cycle ;
\draw  [draw opacity=0] (12,64) -- (20,64) -- (20,74) -- (12,74) -- cycle ;
\draw  [draw opacity=0] (24,64) -- (32,64) -- (32,74) -- (24,74) -- cycle ;
\draw  [draw opacity=0] (40,64) -- (48,64) -- (48,74) -- (40,74) -- cycle ;
\draw  [draw opacity=0] (28,48) -- (48,48) -- (48,62) -- (28,62) -- cycle ;
\draw  [draw opacity=0] (104,82) -- (124,82) -- (124,96) -- (104,96) -- cycle ;
\draw  [draw opacity=0] (100,16) -- (120,16) -- (120,30) -- (100,30) -- cycle ;
\draw  [draw opacity=0] (28,16) -- (48,16) -- (48,30) -- (28,30) -- cycle ;
\draw  [draw opacity=0] (97.5,6) -- (105.5,6) -- (105.5,16) -- (97.5,16) -- cycle ;
\draw  [draw opacity=0] (113.5,6) -- (121.5,6) -- (121.5,16) -- (113.5,16) -- cycle ;
\draw  [draw opacity=0] (97,64) -- (105,64) -- (105,74) -- (97,74) -- cycle ;
\draw  [draw opacity=0] (113,64) -- (121,64) -- (121,74) -- (113,74) -- cycle ;
\draw  [draw opacity=0] (164,6) -- (180,6) -- (180,16) -- (164,16) -- cycle ;
\draw  [draw opacity=0] (180,6) -- (196,6) -- (196,16) -- (180,16) -- cycle ;
\draw  [draw opacity=0] (228,64) -- (244,64) -- (244,74) -- (228,74) -- cycle ;
\draw  [draw opacity=0] (244,64) -- (260,64) -- (260,74) -- (244,74) -- cycle ;
\draw  [draw opacity=0] (180,64) -- (196,64) -- (196,74) -- (180,74) -- cycle ;
\draw  [draw opacity=0] (164,64) -- (180,64) -- (180,74) -- (164,74) -- cycle ;
\draw  [draw opacity=0] (244,6) -- (260,6) -- (260,16) -- (244,16) -- cycle ;
\draw  [draw opacity=0] (228,6) -- (244,6) -- (244,16) -- (228,16) -- cycle ;

\draw (30,40) node  [font=\footnotesize]  {$f$};
\draw (72,40) node  [font=\footnotesize]  {$;$};
\draw (108,40) node  [font=\footnotesize]  {$v$};
\draw (148,40) node  [font=\footnotesize]  {$;$};
\draw (212,40) node  [font=\footnotesize]  {$;$};
\draw (16,11) node  [font=\tiny,color={rgb, 255:red, 191; green, 97; blue, 106 }  ,opacity=1 ]  {$R$};
\draw (28,11) node  [font=\tiny]  {$A_{0}$};
\draw (44,11) node  [font=\tiny]  {$A_{n}$};
\draw (16,69) node  [font=\tiny,color={rgb, 255:red, 191; green, 97; blue, 106 }  ,opacity=1 ]  {$R$};
\draw (28,69) node  [font=\tiny]  {$B_{0}$};
\draw (44,69) node  [font=\tiny]  {$B_{m}$};
\draw (36.5,55.5) node  [font=\tiny]  {$\dotsc $};
\draw (108.5,56.5) node  [font=\tiny]  {$\dotsc $};
\draw (108.5,24.5) node  [font=\tiny]  {$\dotsc $};
\draw (35.5,23.5) node  [font=\tiny]  {$\dotsc $};
\draw (101.5,11) node  [font=\tiny]  {$A_{0}$};
\draw (117.5,11) node  [font=\tiny]  {$A_{n}$};
\draw (101,69) node  [font=\tiny]  {$B_{0}$};
\draw (117,69) node  [font=\tiny]  {$B_{m}$};
\draw (172,11) node  [font=\tiny]  {$A_{0}$};
\draw (188,11) node  [font=\tiny,color={rgb, 255:red, 191; green, 97; blue, 106 }  ,opacity=1 ]  {$R$};
\draw (236,69) node  [font=\tiny]  {$A_{0}$};
\draw (252,69) node  [font=\tiny,color={rgb, 255:red, 191; green, 97; blue, 106 }  ,opacity=1 ]  {$R$};
\draw (188,69) node  [font=\tiny]  {$A_{0}$};
\draw (172,69) node  [font=\tiny,color={rgb, 255:red, 191; green, 97; blue, 106 }  ,opacity=1 ]  {$R$};
\draw (252,11) node  [font=\tiny]  {$A_{0}$};
\draw (236,11) node  [font=\tiny,color={rgb, 255:red, 191; green, 97; blue, 106 }  ,opacity=1 ]  {$R$};
\end{tikzpicture} %
\newline{}
    For each $f \in \hyG(A_0,\dots,A_n;B_0,\dots,B_m)$ and each $v \in \hyV(A_0,\dots,A_n;B_0,\dots,B_m)$.
    \caption{Generators for the runtime monoidal category.}
    \label{fig:rungen}
  \end{figure}
  \begin{figure}[H]
    \centering

\tikzset{every picture/.style={line width=0.75pt}} %

\begin{tikzpicture}[x=0.75pt,y=0.75pt,yscale=-1,xscale=1]
\draw    (212,12) -- (212,20) ;
\draw    (228,12) -- (228,20) ;
\draw    (212,36) -- (212,44) ;
\draw    (228,36) -- (228,44) ;
\draw   (204,20) -- (236,20) -- (236,36) -- (204,36) -- cycle ;
\draw  [draw opacity=0] (16,12) -- (44,12) -- (44,60) -- (16,60) -- cycle ;
\draw    (20,12) -- (20,20) ;
\draw [color={rgb, 255:red, 191; green, 97; blue, 106 }  ,draw opacity=1 ]   (4,12) -- (4,20) ;
\draw [color={rgb, 255:red, 191; green, 97; blue, 106 }  ,draw opacity=1 ]   (4,20) .. controls (4.29,28.14) and (20,28.43) .. (20,36) ;
\draw  [color={rgb, 255:red, 255; green, 255; blue, 255 }  ,draw opacity=1 ][fill={rgb, 255:red, 255; green, 255; blue, 255 }  ,fill opacity=1 ] (14,28.25) .. controls (14,27.15) and (13.1,26.25) .. (12,26.25) .. controls (10.9,26.25) and (10,27.15) .. (10,28.25) .. controls (10,29.35) and (10.9,30.25) .. (12,30.25) .. controls (13.1,30.25) and (14,29.35) .. (14,28.25) -- cycle ;
\draw [color={rgb, 255:red, 0; green, 0; blue, 0 }  ,draw opacity=1 ]   (20,20) .. controls (20.28,28.14) and (4,28.43) .. (4,36) ;
\draw [color={rgb, 255:red, 191; green, 97; blue, 106 }  ,draw opacity=1 ]   (4,52) -- (4,60) ;
\draw    (20,52) -- (20,60) ;
\draw [color={rgb, 255:red, 191; green, 97; blue, 106 }  ,draw opacity=1 ]   (20,36) .. controls (19.71,44.14) and (4,44.43) .. (4,52) ;
\draw  [color={rgb, 255:red, 255; green, 255; blue, 255 }  ,draw opacity=1 ][fill={rgb, 255:red, 255; green, 255; blue, 255 }  ,fill opacity=1 ] (10,44.25) .. controls (10,43.15) and (10.9,42.25) .. (12,42.25) .. controls (13.1,42.25) and (14,43.15) .. (14,44.25) .. controls (14,45.35) and (13.1,46.25) .. (12,46.25) .. controls (10.9,46.25) and (10,45.35) .. (10,44.25) -- cycle ;
\draw [color={rgb, 255:red, 0; green, 0; blue, 0 }  ,draw opacity=1 ]   (4,36) .. controls (3.72,44.14) and (20,44.43) .. (20,52) ;
\draw    (56,12) -- (56,60) ;
\draw [color={rgb, 255:red, 191; green, 97; blue, 106 }  ,draw opacity=1 ]   (40,12) -- (40,60) ;
\draw    (100,12) -- (100.01,20) ;
\draw [color={rgb, 255:red, 191; green, 97; blue, 106 }  ,draw opacity=1 ]   (116,12) -- (116,20) ;
\draw [color={rgb, 255:red, 191; green, 97; blue, 106 }  ,draw opacity=1 ]   (116,20) .. controls (115.72,28.14) and (100.01,28.43) .. (100.01,36) ;
\draw  [color={rgb, 255:red, 255; green, 255; blue, 255 }  ,draw opacity=1 ][fill={rgb, 255:red, 255; green, 255; blue, 255 }  ,fill opacity=1 ] (106.01,28.25) .. controls (106.01,27.15) and (106.9,26.25) .. (108,26.25) .. controls (109.11,26.25) and (110,27.15) .. (110,28.25) .. controls (110,29.35) and (109.11,30.25) .. (108,30.25) .. controls (106.9,30.25) and (106.01,29.35) .. (106.01,28.25) -- cycle ;
\draw [color={rgb, 255:red, 0; green, 0; blue, 0 }  ,draw opacity=1 ]   (100.01,20) .. controls (99.72,28.14) and (116,28.43) .. (116,36) ;
\draw [color={rgb, 255:red, 191; green, 97; blue, 106 }  ,draw opacity=1 ]   (116,52) -- (116,60) ;
\draw    (100.01,52) -- (100,60) ;
\draw [color={rgb, 255:red, 191; green, 97; blue, 106 }  ,draw opacity=1 ]   (100.01,36) .. controls (100.29,44.14) and (116,44.43) .. (116,52) ;
\draw  [color={rgb, 255:red, 255; green, 255; blue, 255 }  ,draw opacity=1 ][fill={rgb, 255:red, 255; green, 255; blue, 255 }  ,fill opacity=1 ] (110,44.25) .. controls (110,43.15) and (109.11,42.25) .. (108,42.25) .. controls (106.9,42.25) and (106.01,43.15) .. (106.01,44.25) .. controls (106.01,45.35) and (106.9,46.25) .. (108,46.25) .. controls (109.11,46.25) and (110,45.35) .. (110,44.25) -- cycle ;
\draw [color={rgb, 255:red, 191; green, 97; blue, 106 }  ,draw opacity=1 ]   (152,12) -- (152,60) ;
\draw [color={rgb, 255:red, 0; green, 0; blue, 0 }  ,draw opacity=1 ]   (136,12) -- (136,60) ;
\draw [color={rgb, 255:red, 191; green, 97; blue, 106 }  ,draw opacity=1 ]   (196,12) -- (196,36) ;
\draw [color={rgb, 255:red, 191; green, 97; blue, 106 }  ,draw opacity=1 ]   (196,36) .. controls (196.2,52.4) and (228.2,44.4) .. (228,60) ;
\draw  [color={rgb, 255:red, 255; green, 255; blue, 255 }  ,draw opacity=1 ][fill={rgb, 255:red, 255; green, 255; blue, 255 }  ,fill opacity=1 ] (212.37,48.13) .. controls (212.37,47.02) and (211.48,46.13) .. (210.38,46.13) .. controls (209.27,46.13) and (208.38,47.02) .. (208.38,48.13) .. controls (208.38,49.23) and (209.27,50.13) .. (210.38,50.13) .. controls (211.48,50.13) and (212.37,49.23) .. (212.37,48.13) -- cycle ;
\draw  [color={rgb, 255:red, 255; green, 255; blue, 255 }  ,draw opacity=1 ][fill={rgb, 255:red, 255; green, 255; blue, 255 }  ,fill opacity=1 ] (223.62,51.88) .. controls (223.62,50.77) and (222.73,49.88) .. (221.63,49.88) .. controls (220.52,49.88) and (219.63,50.77) .. (219.63,51.88) .. controls (219.63,52.98) and (220.52,53.88) .. (221.63,53.88) .. controls (222.73,53.88) and (223.62,52.98) .. (223.62,51.88) -- cycle ;
\draw [color={rgb, 255:red, 0; green, 0; blue, 0 }  ,draw opacity=1 ]   (228,44) .. controls (228.29,52.14) and (212.01,52.43) .. (212.01,60) ;
\draw    (263.99,52) -- (263.99,60) ;
\draw    (279.99,52) -- (280,60) ;
\draw    (263.99,28) -- (264,36) ;
\draw    (280,28) -- (280,36) ;
\draw   (255.99,36) -- (288,36) -- (288,52) -- (255.99,52) -- cycle ;
\draw [color={rgb, 255:red, 191; green, 97; blue, 106 }  ,draw opacity=1 ]   (296,36) -- (296,60) ;
\draw [color={rgb, 255:red, 191; green, 97; blue, 106 }  ,draw opacity=1 ]   (264,12) .. controls (264.2,28.4) and (296.2,20.4) .. (296,36) ;
\draw  [color={rgb, 255:red, 255; green, 255; blue, 255 }  ,draw opacity=1 ][fill={rgb, 255:red, 255; green, 255; blue, 255 }  ,fill opacity=1 ] (272.56,20.86) .. controls (272.56,19.75) and (271.67,18.86) .. (270.57,18.86) .. controls (269.46,18.86) and (268.57,19.75) .. (268.57,20.86) .. controls (268.57,21.96) and (269.46,22.86) .. (270.57,22.86) .. controls (271.67,22.86) and (272.56,21.96) .. (272.56,20.86) -- cycle ;
\draw  [color={rgb, 255:red, 255; green, 255; blue, 255 }  ,draw opacity=1 ][fill={rgb, 255:red, 255; green, 255; blue, 255 }  ,fill opacity=1 ] (282.85,24.29) .. controls (282.85,23.18) and (281.96,22.29) .. (280.85,22.29) .. controls (279.75,22.29) and (278.85,23.18) .. (278.85,24.29) .. controls (278.85,25.39) and (279.75,26.29) .. (280.85,26.29) .. controls (281.96,26.29) and (282.85,25.39) .. (282.85,24.29) -- cycle ;
\draw [color={rgb, 255:red, 0; green, 0; blue, 0 }  ,draw opacity=1 ]   (295.99,12) .. controls (296.27,20.14) and (280,20.43) .. (280,28) ;
\draw [color={rgb, 255:red, 0; green, 0; blue, 0 }  ,draw opacity=1 ]   (279.99,12) .. controls (280.27,20.14) and (264,20.43) .. (264,28) ;
\draw [color={rgb, 255:red, 0; green, 0; blue, 0 }  ,draw opacity=1 ]   (116,36) .. controls (116.29,44.14) and (100.01,44.43) .. (100.01,52) ;
\draw    (364,12) -- (364,20) ;
\draw    (348,12) -- (348,20) ;
\draw    (364,36) -- (364,44) ;
\draw    (348,36) -- (348,44) ;
\draw   (372,20) -- (340,20) -- (340,36) -- (372,36) -- cycle ;
\draw [color={rgb, 255:red, 191; green, 97; blue, 106 }  ,draw opacity=1 ]   (380,12) -- (380,36) ;
\draw [color={rgb, 255:red, 191; green, 97; blue, 106 }  ,draw opacity=1 ]   (380,36) .. controls (379.8,52.4) and (347.8,44.4) .. (348,60) ;
\draw  [color={rgb, 255:red, 255; green, 255; blue, 255 }  ,draw opacity=1 ][fill={rgb, 255:red, 255; green, 255; blue, 255 }  ,fill opacity=1 ] (363.63,48.13) .. controls (363.63,47.02) and (364.52,46.13) .. (365.62,46.13) .. controls (366.73,46.13) and (367.62,47.02) .. (367.62,48.13) .. controls (367.62,49.23) and (366.73,50.13) .. (365.62,50.13) .. controls (364.52,50.13) and (363.63,49.23) .. (363.63,48.13) -- cycle ;
\draw  [color={rgb, 255:red, 255; green, 255; blue, 255 }  ,draw opacity=1 ][fill={rgb, 255:red, 255; green, 255; blue, 255 }  ,fill opacity=1 ] (352.38,51.88) .. controls (352.38,50.77) and (353.27,49.88) .. (354.37,49.88) .. controls (355.48,49.88) and (356.37,50.77) .. (356.37,51.88) .. controls (356.37,52.98) and (355.48,53.88) .. (354.37,53.88) .. controls (353.27,53.88) and (352.38,52.98) .. (352.38,51.88) -- cycle ;
\draw [color={rgb, 255:red, 0; green, 0; blue, 0 }  ,draw opacity=1 ]   (348,44) .. controls (347.71,52.14) and (363.99,52.43) .. (363.99,60) ;
\draw [color={rgb, 255:red, 0; green, 0; blue, 0 }  ,draw opacity=1 ]   (364,44) .. controls (363.71,52.14) and (379.99,52.43) .. (379.99,60) ;
\draw    (432,52) -- (432,60) ;
\draw    (416,52) -- (416,60) ;
\draw    (432,28) -- (431.99,36) ;
\draw    (416,28) -- (416,36) ;
\draw   (440,36) -- (408,36) -- (408,52) -- (440,52) -- cycle ;
\draw [color={rgb, 255:red, 191; green, 97; blue, 106 }  ,draw opacity=1 ]   (400,36) -- (400,60) ;
\draw [color={rgb, 255:red, 191; green, 97; blue, 106 }  ,draw opacity=1 ]   (431.99,12) .. controls (431.79,28.4) and (399.8,20.4) .. (400,36) ;
\draw  [color={rgb, 255:red, 255; green, 255; blue, 255 }  ,draw opacity=1 ][fill={rgb, 255:red, 255; green, 255; blue, 255 }  ,fill opacity=1 ] (423.43,20.86) .. controls (423.43,19.75) and (424.32,18.86) .. (425.42,18.86) .. controls (426.53,18.86) and (427.42,19.75) .. (427.42,20.86) .. controls (427.42,21.96) and (426.53,22.86) .. (425.42,22.86) .. controls (424.32,22.86) and (423.43,21.96) .. (423.43,20.86) -- cycle ;
\draw  [color={rgb, 255:red, 255; green, 255; blue, 255 }  ,draw opacity=1 ][fill={rgb, 255:red, 255; green, 255; blue, 255 }  ,fill opacity=1 ] (413.14,24.29) .. controls (413.14,23.18) and (414.04,22.29) .. (415.14,22.29) .. controls (416.24,22.29) and (417.14,23.18) .. (417.14,24.29) .. controls (417.14,25.39) and (416.24,26.29) .. (415.14,26.29) .. controls (414.04,26.29) and (413.14,25.39) .. (413.14,24.29) -- cycle ;
\draw [color={rgb, 255:red, 0; green, 0; blue, 0 }  ,draw opacity=1 ]   (400.01,12) .. controls (399.72,20.14) and (416,20.43) .. (416,28) ;
\draw [color={rgb, 255:red, 0; green, 0; blue, 0 }  ,draw opacity=1 ]   (416,12) .. controls (415.72,20.14) and (431.99,20.43) .. (431.99,28) ;
\draw [color={rgb, 255:red, 0; green, 0; blue, 0 }  ,draw opacity=1 ]   (212,44) .. controls (212.29,52.14) and (196.01,52.43) .. (196.01,60) ;
\draw  [draw opacity=0] (112,12) -- (140,12) -- (140,60) -- (112,60) -- cycle ;
\draw  [draw opacity=0] (232,12) -- (260,12) -- (260,60) -- (232,60) -- cycle ;
\draw  [draw opacity=0] (376,12) -- (404,12) -- (404,60) -- (376,60) -- cycle ;
\draw  [draw opacity=0] (64,12) -- (92,12) -- (92,60) -- (64,60) -- cycle ;
\draw  [draw opacity=0] (160,12) -- (188,12) -- (188,60) -- (160,60) -- cycle ;
\draw  [draw opacity=0] (304,12) -- (332,12) -- (332,60) -- (304,60) -- cycle ;
\draw  [draw opacity=0] (448,12) -- (476,12) -- (476,60) -- (448,60) -- cycle ;

\draw (220,28) node  [font=\footnotesize]  {$v$};
\draw (220.5,40.5) node  [font=\tiny]  {$\dots$};
\draw (220.5,12.5) node  [font=\tiny]  {$\dots$};
\draw (271.99,44) node  [font=\footnotesize]  {$v$};
\draw (272.5,28.5) node  [font=\tiny]  {$\dots$};
\draw (272.49,52.5) node  [font=\tiny]  {$\dots$};
\draw (355.5,40.5) node  [font=\tiny,xscale=-1]  {$\dots$};
\draw (355.5,12.5) node  [font=\tiny,xscale=-1]  {$\dots$};
\draw (356,28) node  [font=\footnotesize]  {$v$};
\draw (423.5,28.5) node  [font=\tiny,xscale=-1]  {$\dots$};
\draw (423.5,52.5) node  [font=\tiny,xscale=-1]  {$\dots$};
\draw (424,44) node  [font=\footnotesize]  {$v$};
\draw (30,36) node  [font=\footnotesize]  {$\pmb{=}$};
\draw (126,36) node  [font=\footnotesize]  {$\pmb{=}$};
\draw (246,36) node  [font=\footnotesize]  {$\pmb{=}$};
\draw (390,36) node  [font=\footnotesize]  {$\pmb{=}$};
\draw (78,36) node  [font=\footnotesize]  {$\pmb{;}$};
\draw (174,36) node  [font=\footnotesize]  {$\pmb{;}$};
\draw (318,36) node  [font=\footnotesize]  {$\pmb{;}$};
\draw (462,36) node  [font=\footnotesize]  {$\pmb{;}$};

\end{tikzpicture}
     \caption{Axioms for the runtime monoidal category.}
    \label{fig:runaxiom}
  \end{figure}
\end{defi}

Some readers may prefer to understand this as asking the runtime $\R$ to be in
the Drinfeld centre \cite{drinfeld10} of the monoidal category.\footnote{The
Drinfeld centre of a monoidal category $(ℂ,⊗,I)$ is braided monoidal category
that has as objects the pairs $(R,σ)$, where $σ ፡ R ⊗ • → • ⊗ R$ is a natural
isomorphism satisfying the braiding axioms; morphisms in the Drinfeld centre are
those that preserve these braidings \cite{drinfeld10}.} The extra wire that $\R$
provides is only used to prevent interchange, and so it does not really matter
where it is placed in the input and the output. We can choose to always place it
on the left, for instance---and indeed we will be able to do so---but a better
solution is to just consider objects ``up to some runtime braidings''. This is
formalized by the notion of \emph{braid clique}.

\begin{defi}[Braid clique]
  Given any list of objects $A_{0},\dots,A_{n}$ in $\obj{\hyV} = \obj{\hyG}$,
  we construct a \emph{clique}  \cite{trimble:coherence,shulman20182} in the category $\MONRUN(\hyV,\hyG)$: we consider the objects, $A_{0} \tensor \dots \tensor \R_{(i)} \tensor \dots \tensor A_{n}$, created by inserting the runtime $\R$ in all of the possible $0 \leq i \leq n+1$ positions; and we consider the family of commuting isomorphisms constructed by braiding the runtime,
  \[ \sigma_{i,j} \colon A_{0} \tensor \dots \tensor \R_{(i)} \tensor \dots \tensor A_{n} \to  A_{0} \tensor \dots \tensor \R_{(j)} \tensor \dots \tensor A_{n}.\]
  We call this family of isomorphisms the \emph{braid clique}, $\Braid(A_{0},\dots,A_{n})$, on that list.
\end{defi}

\begin{defi}
  A \emph{braid clique morphism}, $$f \colon \Braid(A_{0},\dots,A_{n}) \to \Braid(B_{0},\dots,B_{m}),$$ is a family of morphisms in the runtime monoidal category, $\MONRUN(\hyV,\hyG)$, from each of the objects of first clique to each of the objects of the second clique,
  \[f_{ik} \colon A_{0} \tensor \dots \tensor \R_{(i)} \tensor \dots \tensor A_{n} \to
    B_{0} \tensor \dots \tensor \R_{(k)} \tensor \dots \tensor B_{m},\]
  that moreover commutes with all braiding isomorphisms, $f_{ij} \comp \sigma_{jk} = \sigma_{il} \comp f_{lk}$.
\end{defi}

A braid clique morphism  $f \colon \Braid(A_{0},\dots,A_{n}) \to \Braid(B_{0},\dots,B_{m})$ is fully determined by \emph{any} of its components: any component can be obtained uniquely from any other, simply by pre/post-composing it with braidings.
In particular, a braid clique morphism is always fully determined by its leftmost component, $$f_{00} \colon \R \tensor A_{0} \tensor \dots \tensor A_{n} \to \R \tensor B_{0} \tensor \dots \tensor B_{m}.$$

For the following two lemmas, we choose to always deal with the leftmost component of the braid clique morphism.
Given any clique $\Braid(A_{0},\dots,A_{n})$ we let $A = \tensordot{A}{n}$; so
clique morphisms $\Braid(A_{0},\dots,A_{n}) \to \Braid(B_{0},\dots,B_{m})$ are represented by morphisms $\R \tensor A \to \R \tensor B$.

We now use braid morphisms to construct a \premonoidalCategory{}. Braid cliques are the objects
of a category; however, it is interesting to note they it is \emph{not} a monoidal
category. Indeed, the tensor of morphisms cannot be defined in the naive way: a morphism $\R
\tensor A \to \R \tensor B$ cannot be tensored with a morphism $\R \tensor A'
\to \R \tensor B'$ to obtain a morphism $\R \tensor A \tensor A' \to \R \tensor
B \tensor B'$ (as the runtime would appear twice); they only form a premonoidal category.

\begin{lem}\label{ax:lemma:eff-is-premonoidal}\label{lemma:eff-is-premonoidal}
  Let $(\hyV,\hyG)$ be an \polygraphCouple{}.
  There exists a \premonoidalCategory{}, $\EFF(\hyV,\hyG)$, that has as objects the braid cliques, $\Braid(A_{0},\dots,A_{n})$, in $\MONRUN(\hyV,\hyG)$, and as morphisms the braid clique morphisms between them.
\end{lem}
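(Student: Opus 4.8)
The plan is to construct $\EFF(\hyV,\hyG)$ directly from the strict monoidal category $\MONRUN(\hyV,\hyG)$, exploiting the identification recorded just above: a braid clique morphism $\Braid(A_0,\dots,A_n) \to \Braid(B_0,\dots,B_m)$ is uniquely determined by, and can be freely chosen as, its leftmost component $\hat f \colon \R \tensor A \to \R \tensor B$ in $\MONRUN(\hyV,\hyG)$, where $A = A_0 \tensor \dots \tensor A_n$ and $B = B_0 \tensor \dots \tensor B_m$. Working throughout with leftmost components, I would first verify the category axioms: the identity on $\Braid(A_0,\dots,A_n)$ is the clique of runtime braidings, whose leftmost component is $\id_{\R \tensor A}$; the composite of two clique morphisms is the componentwise composite, whose leftmost component is the composite of leftmost components in $\MONRUN(\hyV,\hyG)$; one checks that this componentwise composite again commutes with all runtime braidings (immediate from the commutation conditions in the two families) and is independent of the intermediate component chosen, after which associativity and unitality are inherited directly from $\MONRUN(\hyV,\hyG)$.

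Next I would equip $\EFF(\hyV,\hyG)$ with a binoidal structure. On objects, set $\Braid(\vec A) \tensor \Braid(\vec B) := \Braid(\vec A, \vec B)$, the clique on the concatenated list, with unit the singleton clique $\Braid()$ on the object $\R$; this is a strict monoid on objects because $\MONRUN(\hyV,\hyG)$ is a strict monoidal category. For right whiskering, I would let $f \tensor \Braid(\vec C)$ have leftmost component $\hat f \tensor \id_C$ --- the new wires never meet the runtime, so nothing beyond $\MONRUN(\hyV,\hyG)$ is needed. For left whiskering, I would describe $\Braid(\vec C) \tensor f$ through the component of $\Braid(\vec C, \vec A)$ that has the runtime inserted just after $\vec C$: in that presentation it is simply $\id_C \tensor \hat f$, and its leftmost component is recovered by conjugating with the runtime braidings relating the two presentations. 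Functoriality of each whiskering then reduces to functoriality of $\tensor$ in $\MONRUN(\hyV,\hyG)$ together with the cancellation of the conjugating braidings inside composites, and the two whiskerings agree on the shared object $\Braid(\vec A) \tensor \Braid(\vec C)$ by construction.

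Finally, since $\MONRUN(\hyV,\hyG)$ is a strict monoidal category, list concatenation is strictly associative and strictly unital, so I would take every premonoidal coherence isomorphism of $\EFF(\hyV,\hyG)$ --- associator, left and right unitors --- to be an identity. Identities are central once whiskering is known to be functorial, naturality separately in each component is automatic, and the pentagon and triangle equations hold trivially; hence $\EFF(\hyV,\hyG)$ is in fact a strict \premonoidalCategory{}, and in particular a \premonoidalCategory{}.

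The only step requiring genuine care --- and the only place the runtime braiding axioms of $\MONRUN(\hyV,\hyG)$ are actually used --- is the well-definedness of left whiskering: one must check that transporting $\id_C \tensor \hat f$ along runtime braidings yields a morphism that again commutes with every braiding of $\Braid(\vec C, \vec A)$ and $\Braid(\vec C, \vec B)$, i.e. a legitimate braid clique morphism $\Braid(\vec C, \vec A) \to \Braid(\vec C, \vec B)$. This follows from the fact that braid clique morphisms are, by definition, closed under pre- and post-composition with braidings, together with the runtime braiding axioms; everything else is bookkeeping transported from $\MONRUN(\hyV,\hyG)$. It is also worth recording, as the discussion preceding the lemma already notes, that the resulting structure is genuinely only premonoidal and not monoidal: two clique morphisms cannot be tensored in general, since their runtime wires would collide.
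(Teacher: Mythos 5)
Your proposal is correct and follows essentially the same route as the paper's own proof: represent clique morphisms by their leftmost components, take composition and right whiskering directly from $\MONRUN(\hyV,\hyG)$, define left whiskering by conjugating with the runtime braidings, and observe that all coherence isomorphisms are identities, hence central and natural. The extra care you devote to well-definedness of left whiskering under the clique identifications is implicit in the paper's diagrammatic presentation but is the same argument.
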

\begin{proof}
  Let us first give $\EFF(\hyV,\hyG)$ category structure.
  The identity on $\Braid(A_{0},\dots,A_{n})$ is the identity on $\R \tensor A$.
  The composition of a morphism $\R  \tensor A \to \R \tensor B$ with a morphism $\R \tensor B \to \R \tensor C$ is their plain composition as string diagrams, in $\MONRUN(\hyV,\hyG)$.

  Let us now check that it is moreover a \premonoidalCategory{}. Tensoring of braid cliques is given by concatenation of lists, which coincides with the tensor of objects in $\MONRUN(\hyV,\hyG)$.
  Whiskering of a morphism $f \colon \R  \tensor A \to \R \tensor B$ is defined with braidings in the left case, $\R \tensor C \tensor A \to \R \tensor C \tensor B$, and by plain whiskering in the right case, $\R \tensor A \tensor C \to \R \tensor B \tensor C$, as depicted in \Cref{fig:whiskering}.
   \begin{figure}[H]
    \centering

\tikzset{every picture/.style={line width=0.75pt}} %

\begin{tikzpicture}[x=0.75pt,y=0.75pt,yscale=-1,xscale=1]
\draw    (60,20) -- (60,36) ;
\draw    (76,20) -- (76,36) ;
\draw [color={rgb, 255:red, 0; green, 0; blue, 0 }  ,draw opacity=1 ]   (44.01,8) -- (44.01,20) ;
\draw    (60,52) -- (60,80) ;
\draw    (76,52) -- (76,80) ;
\draw  [draw opacity=0] (60,20) -- (80,20) -- (80,34) -- (60,34) -- cycle ;
\draw [color={rgb, 255:red, 191; green, 97; blue, 106 }  ,draw opacity=1 ]   (20.01,20) .. controls (20.21,32.4) and (47.8,23.2) .. (48,36) ;
\draw [color={rgb, 255:red, 0; green, 0; blue, 0 }  ,draw opacity=1 ]   (32.01,8) -- (32.01,20) ;
\draw [color={rgb, 255:red, 191; green, 97; blue, 106 }  ,draw opacity=1 ]   (20.01,8) -- (20.01,20) ;
\draw [color={rgb, 255:red, 0; green, 0; blue, 0 }  ,draw opacity=1 ]   (32.01,20) .. controls (32.21,32.4) and (19.81,23.2) .. (20.01,36) ;
\draw [color={rgb, 255:red, 0; green, 0; blue, 0 }  ,draw opacity=1 ]   (44.01,20) .. controls (44.21,32.4) and (31.81,23.2) .. (32.01,36) ;
\draw [color={rgb, 255:red, 0; green, 0; blue, 0 }  ,draw opacity=1 ]   (76,8) -- (76,20) ;
\draw [color={rgb, 255:red, 0; green, 0; blue, 0 }  ,draw opacity=1 ]   (60,8) -- (60,20) ;
\draw [color={rgb, 255:red, 0; green, 0; blue, 0 }  ,draw opacity=1 ]   (32,36) -- (32,52) ;
\draw [color={rgb, 255:red, 0; green, 0; blue, 0 }  ,draw opacity=1 ]   (20,36) -- (20,52) ;
\draw [color={rgb, 255:red, 191; green, 97; blue, 106 }  ,draw opacity=1 ]   (48,52) .. controls (48.2,64.4) and (19.8,55.2) .. (20,68) ;
\draw [color={rgb, 255:red, 0; green, 0; blue, 0 }  ,draw opacity=1 ]   (20,52) .. controls (20.2,64.4) and (31.8,55.2) .. (32,68) ;
\draw [color={rgb, 255:red, 0; green, 0; blue, 0 }  ,draw opacity=1 ]   (32,52) .. controls (32.2,64.4) and (43.8,55.2) .. (44,68) ;
\draw   (40,36) -- (84,36) -- (84,52) -- (40,52) -- cycle ;
\draw [color={rgb, 255:red, 0; green, 0; blue, 0 }  ,draw opacity=1 ]   (44,68) -- (44,80) ;
\draw [color={rgb, 255:red, 0; green, 0; blue, 0 }  ,draw opacity=1 ]   (32,68) -- (32,80) ;
\draw [color={rgb, 255:red, 191; green, 97; blue, 106 }  ,draw opacity=1 ]   (20,68) -- (20,80) ;
\draw    (148,20) -- (148,36) ;
\draw    (164,20) -- (164,36) ;
\draw    (148,52) -- (148,80) ;
\draw    (164,52) -- (164,80) ;
\draw  [draw opacity=0] (148,20) -- (168,20) -- (168,34) -- (148,34) -- cycle ;
\draw [color={rgb, 255:red, 191; green, 97; blue, 106 }  ,draw opacity=1 ]   (136,8) -- (136,36) ;
\draw [color={rgb, 255:red, 0; green, 0; blue, 0 }  ,draw opacity=1 ]   (164,8) -- (164,20) ;
\draw [color={rgb, 255:red, 0; green, 0; blue, 0 }  ,draw opacity=1 ]   (148,8) -- (148,20) ;
\draw   (128,36) -- (172,36) -- (172,52) -- (128,52) -- cycle ;
\draw [color={rgb, 255:red, 191; green, 97; blue, 106 }  ,draw opacity=1 ]   (136,52) -- (136,80) ;
\draw [color={rgb, 255:red, 0; green, 0; blue, 0 }  ,draw opacity=1 ]   (192,8) -- (192,80) ;
\draw [color={rgb, 255:red, 0; green, 0; blue, 0 }  ,draw opacity=1 ]   (180,8) -- (180,80) ;
\draw  [draw opacity=0] (92,20) -- (120,20) -- (120,68) -- (92,68) -- cycle ;

\draw (62,44) node  [font=\footnotesize]  {$f$};
\draw (68.5,68.5) node  [font=\tiny]  {$\dots$};
\draw (38.5,68.5) node  [font=\tiny]  {$\dots$};
\draw (37.5,18.5) node  [font=\tiny]  {$\dots$};
\draw (150,44) node  [font=\footnotesize]  {$f$};
\draw (156.5,69.5) node  [font=\tiny]  {$\dots$};

\draw (186.5,69.5) node  [font=\tiny]  {$\dots$};

\draw (106,44) node  [font=\footnotesize]  {$\pmb{;}$};

\draw (67.5,18.5) node  [font=\tiny]  {$\dots$};
\draw (156.5,18.5) node  [font=\tiny]  {$\dots$};
\draw (186.5,18.5) node  [font=\tiny]  {$\dots$};

\end{tikzpicture}
     \caption{Whiskering in the runtime premonoidal category.}
    \label{fig:whiskering}
  \end{figure}

  Finally, the associators and unitors are identities, which are natural and central.
\end{proof}

\begin{lem}
  \label{ax:lemma:identity-mon-eff}\label{lemma:identity-mon-eff}
  Let $(\hyV,\hyG)$ be an \polygraphCouple{}.
  There exists an identity-on-objects functor \(\MON(\hyV) \to \EFF(\hyV,\hyG)\) that strictly preserves the premonoidal structure and whose image is central. This determines an \effectfulCategory{}.
\end{lem}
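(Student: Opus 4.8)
The plan is to exhibit the functor explicitly and then check its three required properties: identity-on-objects, strict preservation of the premonoidal structure, and centrality of the image. On objects, since $\obj{\hyV} = \obj{\hyG}$, a list $[A_0,\dots,A_n]$ of objects of $\hyV$ is exactly the data of the braid clique $\Braid(A_0,\dots,A_n)$, so the functor is declared to be this identification. On morphisms, I would first note that there is an evident inclusion of polygraphs $\hyV \hookrightarrow \Run(\hyV,\hyG)$ --- the identity on objects, sending each pure generator $v$ to the runtime generator that carries no $\R$-wire --- and that, by the Joyal--Street adjunction followed by the quotient defining $\MONRUN(\hyV,\hyG)$, this induces a strict monoidal functor $\iota \colon \MON(\hyV) \to \MONRUN(\hyV,\hyG)$. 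For $w \colon A \to B$ in $\MON(\hyV)$ I would then set $F(w)$ to be the braid clique morphism whose leftmost component is $\id_{\R} \tensor \iota(w) \colon \R \tensor A \to \R \tensor B$; by the remark preceding \Cref{lemma:eff-is-premonoidal} a braid clique morphism is pinned down by any one of its components, so this is unambiguous.

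That $F$ is a functor would follow by assembling known functorialities: $\iota$ is a functor, left-whiskering by $\R$ is a functor on $\MONRUN(\hyV,\hyG)$, and --- by the composition law in the proof of \Cref{lemma:eff-is-premonoidal} --- passing from a braid clique morphism to its leftmost component is a bijection that preserves identities and composites; $F$ is the resulting composite. It is identity-on-objects by construction, sends the empty list to $\Braid()$ (the unit of $\EFF(\hyV,\hyG)$) and list concatenation to list concatenation, so it preserves the tensor of objects and the unit strictly, and the associators and unitors are identities on both sides. Right whiskering in $\EFF(\hyV,\hyG)$ is plain whiskering in $\MONRUN(\hyV,\hyG)$, hence preserved since $\iota$ is strict monoidal. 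The only delicate coherence is left whiskering: $F(\id_C \tensor w)$ has leftmost component $\id_{\R \tensor C} \tensor \iota(w)$, while $C \tensor F(w)$ is defined by braiding $\R$ past $C$, applying $\id_\R \tensor \iota(w)$, and braiding back; I would check these agree using that $\iota(w)$ sits on a tensor factor disjoint from the braiding (so ordinary interchange in the monoidal category $\MONRUN(\hyV,\hyG)$ lets it commute past) together with $\sigma_{C,\R} \comp \sigma_{\R,C} = \id$ from the braiding axioms of \Cref{fig:runaxiom}.

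The substantive step is centrality of the image. Fixing a pure $w$ and an arbitrary braid clique morphism $g$, represented in $\MONRUN(\hyV,\hyG)$ by $\iota(w) \colon A \to B$ and by $g_{00} \colon \R \tensor A' \to \R \tensor B'$, I would unwind both sides of the interchange equation $(F(w) \tensor \id) \comp (\id \tensor g) = (\id \tensor g) \comp (F(w) \tensor \id)$ at the level of leftmost components: the right-whiskerings of $F(w)$ contribute the plain morphism $\id_\R \tensor \iota(w) \tensor \id$, whereas the left-whiskerings of $g$ contribute braiding conjugates of $\id \tensor g_{00}$. Since $\iota(w)$ never touches the runtime wire, it slides past every $\sigma$ by naturality of the $\R$-braiding with respect to pure morphisms (an instance of the braiding axioms, \Cref{fig:runaxiom}), and it commutes with $g_{00}$ because the two occupy disjoint tensor factors of the genuine monoidal category $\MONRUN(\hyV,\hyG)$; propagating these two moves through each composite makes them literally equal, and the second interchange equation (with $F(w)$ on the right) is symmetric. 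Together with the previous paragraph this exhibits $F \colon \MON(\hyV) \to \EFF(\hyV,\hyG)$ as an identity-on-objects functor from a (strict) monoidal category to a \premonoidalCategory{}, strictly preserving the premonoidal structure and with central image --- that is, an \effectfulCategory{}.

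I expect the centrality computation, together with the closely related check that left whiskering is transported correctly, to be the only genuine obstacle. Conceptually it is transparent --- a morphism that leaves the runtime wire alone can be slid past anything --- but turning ``slid past'' into equalities forces one to be explicit about the braiding axioms and about which component of each braid clique is being manipulated; I would keep the latter under control by working systematically with leftmost components.
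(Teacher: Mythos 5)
Your proposal is correct and follows essentially the same route as the paper: the functor is defined by $v \mapsto \id_{\R} \tensor v$ read as the leftmost component of a braid clique morphism, right whiskering is preserved on the nose, left whiskering and centrality both reduce to the braiding axioms of \Cref{fig:runaxiom} (naturality of the $\R$-braiding past pure morphisms plus ordinary interchange in the genuine monoidal category $\MONRUN(\hyV,\hyG)$). The only difference is that you make explicit the polygraph inclusion $\hyV \hookrightarrow \Run(\hyV,\hyG)$ and the induced functor $\iota$, which the paper leaves implicit; the substance of the argument is identical.
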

\begin{proof}
  A morphism $v \in \MON(\hyV)(A,B)$ induces a morphism $(\im_{\R} \tensor v) \in \MONRUN(\hyV,\hyG)(\R \tensor A,\R \tensor B)$, which can be read as a morphism of cliques $(\im_{\R} \tensor v) \in \EFF(\hyV,\hyG)(A,B)$.
  This is tensoring with an identity, which is indeed functorial.

  Let us now show that this functor strictly preserves the premonoidal structure.
  The fact that it preserves right whiskerings is immediate.
  The fact that it preserves left whiskerings follows from the axioms of symmetry (\Cref{fig:whiskeringeq}, left).
  Associators and unitors are identities, which are preserved by tensoring with an identity.
  \begin{figure}[H]
    \centering

\tikzset{every picture/.style={line width=0.75pt}} %

\begin{tikzpicture}[x=0.75pt,y=0.75pt,yscale=-1,xscale=1]
\draw    (516,64) -- (516,80) ;
\draw    (532,64) -- (532,80) ;
\draw [color={rgb, 255:red, 191; green, 97; blue, 106 }  ,draw opacity=1 ]   (500,64) -- (500,80) ;
\draw    (48,20) -- (48,36) ;
\draw    (64,20) -- (64,36) ;
\draw [color={rgb, 255:red, 0; green, 0; blue, 0 }  ,draw opacity=1 ]   (32.01,8) -- (32.01,20) ;
\draw    (48,52) -- (48,80) ;
\draw    (64,52) -- (64,80) ;
\draw  [draw opacity=0] (48,20) -- (68,20) -- (68,34) -- (48,34) -- cycle ;
\draw [color={rgb, 255:red, 191; green, 97; blue, 106 }  ,draw opacity=1 ]   (8.01,20) .. controls (8.21,32.4) and (31.8,23.2) .. (32,36) ;
\draw [color={rgb, 255:red, 0; green, 0; blue, 0 }  ,draw opacity=1 ]   (20.01,8) -- (20.01,20) ;
\draw [color={rgb, 255:red, 191; green, 97; blue, 106 }  ,draw opacity=1 ]   (8.01,8) -- (8.01,20) ;
\draw [color={rgb, 255:red, 0; green, 0; blue, 0 }  ,draw opacity=1 ]   (20.01,20) .. controls (20.21,32.4) and (7.81,23.2) .. (8.01,36) ;
\draw [color={rgb, 255:red, 0; green, 0; blue, 0 }  ,draw opacity=1 ]   (32.01,20) .. controls (32.21,32.4) and (19.81,23.2) .. (20.01,36) ;
\draw [color={rgb, 255:red, 0; green, 0; blue, 0 }  ,draw opacity=1 ]   (64,8) -- (64,20) ;
\draw [color={rgb, 255:red, 0; green, 0; blue, 0 }  ,draw opacity=1 ]   (48,8) -- (48,20) ;
\draw [color={rgb, 255:red, 0; green, 0; blue, 0 }  ,draw opacity=1 ]   (20,36) -- (20,52) ;
\draw [color={rgb, 255:red, 0; green, 0; blue, 0 }  ,draw opacity=1 ]   (8,36) -- (8,52) ;
\draw [color={rgb, 255:red, 191; green, 97; blue, 106 }  ,draw opacity=1 ]   (32,52) .. controls (32.2,64.4) and (7.8,55.2) .. (8,68) ;
\draw [color={rgb, 255:red, 0; green, 0; blue, 0 }  ,draw opacity=1 ]   (8,52) .. controls (8.2,64.4) and (19.8,55.2) .. (20,68) ;
\draw [color={rgb, 255:red, 0; green, 0; blue, 0 }  ,draw opacity=1 ]   (20,52) .. controls (20.2,64.4) and (31.8,55.2) .. (32,68) ;
\draw   (40,36) -- (72,36) -- (72,52) -- (40,52) -- cycle ;
\draw [color={rgb, 255:red, 0; green, 0; blue, 0 }  ,draw opacity=1 ]   (32,68) -- (32,80) ;
\draw [color={rgb, 255:red, 0; green, 0; blue, 0 }  ,draw opacity=1 ]   (20,68) -- (20,80) ;
\draw [color={rgb, 255:red, 191; green, 97; blue, 106 }  ,draw opacity=1 ]   (8,68) -- (8,80) ;
\draw [color={rgb, 255:red, 191; green, 97; blue, 106 }  ,draw opacity=1 ]   (32,36) -- (32,52) ;
\draw    (132,20) -- (132,36) ;
\draw    (148,20) -- (148,36) ;
\draw    (132,52) -- (132,80) ;
\draw    (148,52) -- (148,80) ;
\draw  [draw opacity=0] (132,20) -- (152,20) -- (152,34) -- (132,34) -- cycle ;
\draw [color={rgb, 255:red, 0; green, 0; blue, 0 }  ,draw opacity=1 ]   (104.01,8) -- (104,80) ;
\draw [color={rgb, 255:red, 0; green, 0; blue, 0 }  ,draw opacity=1 ]   (148,8) -- (148,20) ;
\draw [color={rgb, 255:red, 0; green, 0; blue, 0 }  ,draw opacity=1 ]   (132,8) -- (132,20) ;
\draw   (124,36) -- (156,36) -- (156,52) -- (124,52) -- cycle ;
\draw  [draw opacity=0] (68,20) -- (96,20) -- (96,68) -- (68,68) -- cycle ;
\draw [color={rgb, 255:red, 0; green, 0; blue, 0 }  ,draw opacity=1 ]   (116.01,8) -- (116,80) ;
\draw [color={rgb, 255:red, 191; green, 97; blue, 106 }  ,draw opacity=1 ]   (92.01,8) -- (92,80) ;
\draw  [draw opacity=0] (160,24) -- (188,24) -- (188,72) -- (160,72) -- cycle ;
\draw    (240,8) -- (240,48) ;
\draw    (256,8) -- (256,48) ;
\draw    (240,64) -- (240,80) ;
\draw    (256,64) -- (256,80) ;
\draw [color={rgb, 255:red, 191; green, 97; blue, 106 }  ,draw opacity=1 ]   (192,32) .. controls (192.2,44.4) and (227.8,35.2) .. (228,48) ;
\draw [color={rgb, 255:red, 0; green, 0; blue, 0 }  ,draw opacity=1 ]   (208,48) -- (208,64) ;
\draw [color={rgb, 255:red, 0; green, 0; blue, 0 }  ,draw opacity=1 ]   (192,48) -- (192,64) ;
\draw [color={rgb, 255:red, 191; green, 97; blue, 106 }  ,draw opacity=1 ]   (228,64) .. controls (228.2,76.4) and (191.8,67.2) .. (192,80) ;
\draw   (220,48) -- (264,48) -- (264,64) -- (220,64) -- cycle ;
\draw   (200,16) -- (232,16) -- (232,32) -- (200,32) -- cycle ;
\draw    (208,8) -- (208,16) ;
\draw    (224,8) -- (224,16) ;
\draw [color={rgb, 255:red, 191; green, 97; blue, 106 }  ,draw opacity=1 ]   (192.01,8) -- (192,32) ;
\draw  [color={rgb, 255:red, 255; green, 255; blue, 255 }  ,draw opacity=1 ][fill={rgb, 255:red, 255; green, 255; blue, 255 }  ,fill opacity=1 ] (205,39) .. controls (205,37.9) and (204.1,37) .. (203,37) .. controls (201.9,37) and (201,37.9) .. (201,39) .. controls (201,40.1) and (201.9,41) .. (203,41) .. controls (204.1,41) and (205,40.1) .. (205,39) -- cycle ;
\draw  [color={rgb, 255:red, 255; green, 255; blue, 255 }  ,draw opacity=1 ][fill={rgb, 255:red, 255; green, 255; blue, 255 }  ,fill opacity=1 ] (216,40.55) .. controls (216,39.45) and (215.1,38.55) .. (214,38.55) .. controls (212.9,38.55) and (212,39.45) .. (212,40.55) .. controls (212,41.66) and (212.9,42.55) .. (214,42.55) .. controls (215.1,42.55) and (216,41.66) .. (216,40.55) -- cycle ;
\draw  [color={rgb, 255:red, 255; green, 255; blue, 255 }  ,draw opacity=1 ][fill={rgb, 255:red, 255; green, 255; blue, 255 }  ,fill opacity=1 ] (216,71.43) .. controls (216,70.32) and (215.1,69.43) .. (214,69.43) .. controls (212.9,69.43) and (212,70.32) .. (212,71.43) .. controls (212,72.53) and (212.9,73.43) .. (214,73.43) .. controls (215.1,73.43) and (216,72.53) .. (216,71.43) -- cycle ;
\draw  [color={rgb, 255:red, 255; green, 255; blue, 255 }  ,draw opacity=1 ][fill={rgb, 255:red, 255; green, 255; blue, 255 }  ,fill opacity=1 ] (204,72.29) .. controls (204,71.18) and (203.1,70.29) .. (202,70.29) .. controls (200.9,70.29) and (200,71.18) .. (200,72.29) .. controls (200,73.39) and (200.9,74.29) .. (202,74.29) .. controls (203.1,74.29) and (204,73.39) .. (204,72.29) -- cycle ;
\draw [color={rgb, 255:red, 0; green, 0; blue, 0 }  ,draw opacity=1 ]   (208,32) .. controls (208.2,44.4) and (191.8,35.2) .. (192,48) ;
\draw [color={rgb, 255:red, 0; green, 0; blue, 0 }  ,draw opacity=1 ]   (224,32) .. controls (224.2,44.4) and (207.8,35.2) .. (208,48) ;
\draw [color={rgb, 255:red, 0; green, 0; blue, 0 }  ,draw opacity=1 ]   (192,64) .. controls (192.2,76.4) and (207.8,67.2) .. (208,80) ;
\draw [color={rgb, 255:red, 0; green, 0; blue, 0 }  ,draw opacity=1 ]   (208,64) .. controls (208.2,76.4) and (223.8,67.2) .. (224,80) ;
\draw    (332,8) -- (332,24) ;
\draw    (348,8) -- (348,24) ;
\draw    (332,40) -- (332,56) ;
\draw    (348,40) -- (348,56) ;
\draw [color={rgb, 255:red, 191; green, 97; blue, 106 }  ,draw opacity=1 ]   (284,8) .. controls (284.2,20.4) and (319.8,11.2) .. (320,24) ;
\draw [color={rgb, 255:red, 0; green, 0; blue, 0 }  ,draw opacity=1 ]   (300,24) -- (300,40) ;
\draw [color={rgb, 255:red, 0; green, 0; blue, 0 }  ,draw opacity=1 ]   (284,24) -- (284,40) ;
\draw [color={rgb, 255:red, 191; green, 97; blue, 106 }  ,draw opacity=1 ]   (320,40) .. controls (320.2,52.4) and (283.8,43.2) .. (284,56) ;
\draw   (312,24) -- (356,24) -- (356,40) -- (312,40) -- cycle ;
\draw  [color={rgb, 255:red, 255; green, 255; blue, 255 }  ,draw opacity=1 ][fill={rgb, 255:red, 255; green, 255; blue, 255 }  ,fill opacity=1 ] (297,15) .. controls (297,13.9) and (296.11,13) .. (295,13) .. controls (293.9,13) and (293.01,13.9) .. (293.01,15) .. controls (293.01,16.1) and (293.9,17) .. (295,17) .. controls (296.11,17) and (297,16.1) .. (297,15) -- cycle ;
\draw  [color={rgb, 255:red, 255; green, 255; blue, 255 }  ,draw opacity=1 ][fill={rgb, 255:red, 255; green, 255; blue, 255 }  ,fill opacity=1 ] (308,16.55) .. controls (308,15.45) and (307.11,14.55) .. (306,14.55) .. controls (304.9,14.55) and (304.01,15.45) .. (304.01,16.55) .. controls (304.01,17.66) and (304.9,18.55) .. (306,18.55) .. controls (307.11,18.55) and (308,17.66) .. (308,16.55) -- cycle ;
\draw  [color={rgb, 255:red, 255; green, 255; blue, 255 }  ,draw opacity=1 ][fill={rgb, 255:red, 255; green, 255; blue, 255 }  ,fill opacity=1 ] (308,47.43) .. controls (308,46.32) and (307.11,45.43) .. (306,45.43) .. controls (304.9,45.43) and (304.01,46.32) .. (304.01,47.43) .. controls (304.01,48.53) and (304.9,49.43) .. (306,49.43) .. controls (307.11,49.43) and (308,48.53) .. (308,47.43) -- cycle ;
\draw  [color={rgb, 255:red, 255; green, 255; blue, 255 }  ,draw opacity=1 ][fill={rgb, 255:red, 255; green, 255; blue, 255 }  ,fill opacity=1 ] (296,48.29) .. controls (296,47.18) and (295.11,46.29) .. (294,46.29) .. controls (292.9,46.29) and (292.01,47.18) .. (292.01,48.29) .. controls (292.01,49.39) and (292.9,50.29) .. (294,50.29) .. controls (295.11,50.29) and (296,49.39) .. (296,48.29) -- cycle ;
\draw [color={rgb, 255:red, 0; green, 0; blue, 0 }  ,draw opacity=1 ]   (300,8) .. controls (300.2,20.4) and (283.8,11.2) .. (284,24) ;
\draw [color={rgb, 255:red, 0; green, 0; blue, 0 }  ,draw opacity=1 ]   (316.01,8) .. controls (316.21,20.4) and (299.8,11.2) .. (300,24) ;
\draw [color={rgb, 255:red, 0; green, 0; blue, 0 }  ,draw opacity=1 ]   (284,40) .. controls (284.2,52.4) and (299.8,43.2) .. (300,56) ;
\draw [color={rgb, 255:red, 0; green, 0; blue, 0 }  ,draw opacity=1 ]   (300,40) .. controls (300.2,52.4) and (315.8,43.2) .. (316,56) ;
\draw    (332,56) -- (332,80) ;
\draw    (348,56) -- (348,80) ;
\draw   (292,56) -- (324,56) -- (324,72) -- (292,72) -- cycle ;
\draw    (300,72) -- (300,80) ;
\draw    (316,72) -- (316,80) ;
\draw [color={rgb, 255:red, 191; green, 97; blue, 106 }  ,draw opacity=1 ]   (284.01,56) -- (284,80) ;
\draw [color={rgb, 255:red, 191; green, 97; blue, 106 }  ,draw opacity=1 ]   (400,40) .. controls (400.2,52.4) and (435.8,43.2) .. (436,56) ;
\draw  [color={rgb, 255:red, 255; green, 255; blue, 255 }  ,draw opacity=1 ][fill={rgb, 255:red, 255; green, 255; blue, 255 }  ,fill opacity=1 ] (413,47) .. controls (413,45.9) and (412.11,45) .. (411,45) .. controls (409.9,45) and (409.01,45.9) .. (409.01,47) .. controls (409.01,48.1) and (409.9,49) .. (411,49) .. controls (412.11,49) and (413,48.1) .. (413,47) -- cycle ;
\draw  [color={rgb, 255:red, 255; green, 255; blue, 255 }  ,draw opacity=1 ][fill={rgb, 255:red, 255; green, 255; blue, 255 }  ,fill opacity=1 ] (424,48.55) .. controls (424,47.45) and (423.11,46.55) .. (422,46.55) .. controls (420.9,46.55) and (420.01,47.45) .. (420.01,48.55) .. controls (420.01,49.66) and (420.9,50.55) .. (422,50.55) .. controls (423.11,50.55) and (424,49.66) .. (424,48.55) -- cycle ;
\draw [color={rgb, 255:red, 0; green, 0; blue, 0 }  ,draw opacity=1 ]   (416,40) .. controls (416.2,52.4) and (399.8,43.2) .. (400,56) ;
\draw [color={rgb, 255:red, 0; green, 0; blue, 0 }  ,draw opacity=1 ]   (432.01,40) .. controls (432.21,52.4) and (415.8,43.2) .. (416,56) ;
\draw    (416,8) -- (416,24) ;
\draw    (432,8) -- (432,24) ;
\draw   (392,24) -- (440,24) -- (440,40) -- (392,40) -- cycle ;
\draw [color={rgb, 255:red, 191; green, 97; blue, 106 }  ,draw opacity=1 ]   (400,8) -- (400,24) ;
\draw [color={rgb, 255:red, 191; green, 97; blue, 106 }  ,draw opacity=1 ]   (436,56) .. controls (435.8,68.4) and (400.2,59.2) .. (400,72) ;
\draw  [color={rgb, 255:red, 255; green, 255; blue, 255 }  ,draw opacity=1 ][fill={rgb, 255:red, 255; green, 255; blue, 255 }  ,fill opacity=1 ] (420,63.33) .. controls (420,62.23) and (420.9,61.33) .. (422,61.33) .. controls (423.11,61.33) and (424,62.23) .. (424,63.33) .. controls (424,64.44) and (423.11,65.33) .. (422,65.33) .. controls (420.9,65.33) and (420,64.44) .. (420,63.33) -- cycle ;
\draw  [color={rgb, 255:red, 255; green, 255; blue, 255 }  ,draw opacity=1 ][fill={rgb, 255:red, 255; green, 255; blue, 255 }  ,fill opacity=1 ] (409,64.67) .. controls (409,63.56) and (409.9,62.67) .. (411,62.67) .. controls (412.11,62.67) and (413,63.56) .. (413,64.67) .. controls (413,65.77) and (412.11,66.67) .. (411,66.67) .. controls (409.9,66.67) and (409,65.77) .. (409,64.67) -- cycle ;
\draw [color={rgb, 255:red, 0; green, 0; blue, 0 }  ,draw opacity=1 ]   (400,56) .. controls (399.8,68.4) and (416.2,59.2) .. (416,72) ;
\draw [color={rgb, 255:red, 0; green, 0; blue, 0 }  ,draw opacity=1 ]   (416,56) .. controls (415.8,68.4) and (432.2,59.2) .. (432,72) ;
\draw   (440,48) -- (472,48) -- (472,64) -- (440,64) -- cycle ;
\draw    (448,64) -- (448,72) ;
\draw    (464,64) -- (464,72) ;
\draw    (448,8) -- (448,48) ;
\draw    (464,8) -- (464,48) ;
\draw    (416,72) -- (416,80) ;
\draw    (432,72) -- (432,80) ;
\draw [color={rgb, 255:red, 191; green, 97; blue, 106 }  ,draw opacity=1 ]   (400,72) -- (400,80) ;
\draw    (448,72) -- (448,80) ;
\draw    (464,72) -- (464,80) ;
\draw [color={rgb, 255:red, 191; green, 97; blue, 106 }  ,draw opacity=1 ]   (500,16) .. controls (500.2,28.4) and (535.8,19.2) .. (536,32) ;
\draw  [color={rgb, 255:red, 255; green, 255; blue, 255 }  ,draw opacity=1 ][fill={rgb, 255:red, 255; green, 255; blue, 255 }  ,fill opacity=1 ] (513,23) .. controls (513,21.9) and (512.11,21) .. (511,21) .. controls (509.9,21) and (509.01,21.9) .. (509.01,23) .. controls (509.01,24.1) and (509.9,25) .. (511,25) .. controls (512.11,25) and (513,24.1) .. (513,23) -- cycle ;
\draw  [color={rgb, 255:red, 255; green, 255; blue, 255 }  ,draw opacity=1 ][fill={rgb, 255:red, 255; green, 255; blue, 255 }  ,fill opacity=1 ] (524,24.55) .. controls (524,23.45) and (523.11,22.55) .. (522,22.55) .. controls (520.9,22.55) and (520.01,23.45) .. (520.01,24.55) .. controls (520.01,25.66) and (520.9,26.55) .. (522,26.55) .. controls (523.11,26.55) and (524,25.66) .. (524,24.55) -- cycle ;
\draw [color={rgb, 255:red, 0; green, 0; blue, 0 }  ,draw opacity=1 ]   (516,16) .. controls (516.2,28.4) and (499.8,19.2) .. (500,32) ;
\draw [color={rgb, 255:red, 0; green, 0; blue, 0 }  ,draw opacity=1 ]   (532.01,16) .. controls (532.21,28.4) and (515.8,19.2) .. (516,32) ;
\draw [color={rgb, 255:red, 191; green, 97; blue, 106 }  ,draw opacity=1 ]   (536,32) .. controls (535.8,44.4) and (500.2,35.2) .. (500,48) ;
\draw  [color={rgb, 255:red, 255; green, 255; blue, 255 }  ,draw opacity=1 ][fill={rgb, 255:red, 255; green, 255; blue, 255 }  ,fill opacity=1 ] (520,39.33) .. controls (520,38.23) and (520.9,37.33) .. (522,37.33) .. controls (523.11,37.33) and (524,38.23) .. (524,39.33) .. controls (524,40.44) and (523.11,41.33) .. (522,41.33) .. controls (520.9,41.33) and (520,40.44) .. (520,39.33) -- cycle ;
\draw  [color={rgb, 255:red, 255; green, 255; blue, 255 }  ,draw opacity=1 ][fill={rgb, 255:red, 255; green, 255; blue, 255 }  ,fill opacity=1 ] (509,40.67) .. controls (509,39.56) and (509.9,38.67) .. (511,38.67) .. controls (512.11,38.67) and (513,39.56) .. (513,40.67) .. controls (513,41.77) and (512.11,42.67) .. (511,42.67) .. controls (509.9,42.67) and (509,41.77) .. (509,40.67) -- cycle ;
\draw [color={rgb, 255:red, 0; green, 0; blue, 0 }  ,draw opacity=1 ]   (500,32) .. controls (499.8,44.4) and (516.2,35.2) .. (516,48) ;
\draw [color={rgb, 255:red, 0; green, 0; blue, 0 }  ,draw opacity=1 ]   (516,32) .. controls (515.8,44.4) and (532.2,35.2) .. (532,48) ;
\draw   (540,24) -- (572,24) -- (572,40) -- (540,40) -- cycle ;
\draw    (548,40) -- (548,48) ;
\draw    (564,40) -- (564,48) ;
\draw    (548,8) -- (548,24) ;
\draw    (564,8) -- (564,24) ;
\draw    (548,48) -- (548,80) ;
\draw    (564,48) -- (564,80) ;
\draw    (516,8) -- (516,16) ;
\draw    (532,8) -- (532,16) ;
\draw [color={rgb, 255:red, 191; green, 97; blue, 106 }  ,draw opacity=1 ]   (500,8) -- (500,16) ;
\draw   (492,48) -- (540,48) -- (540,64) -- (492,64) -- cycle ;
\draw  [draw opacity=0] (260,20) -- (288,20) -- (288,68) -- (260,68) -- cycle ;
\draw  [draw opacity=0] (360,20) -- (388,20) -- (388,68) -- (360,68) -- cycle ;
\draw  [draw opacity=0] (468,20) -- (496,20) -- (496,68) -- (468,68) -- cycle ;

\draw (56,44) node  [font=\footnotesize]  {$v$};
\draw (56.5,68.5) node  [font=\tiny]  {$\dotsc $};
\draw (26.5,68.5) node  [font=\tiny]  {$\dotsc $};
\draw (25.5,18.5) node  [font=\tiny]  {$\dotsc $};
\draw (55.5,18.5) node  [font=\tiny]  {$\dotsc $};
\draw (140,44) node  [font=\footnotesize]  {$v$};
\draw (140.5,68.5) node  [font=\tiny]  {$\dotsc $};
\draw (110.5,68.5) node  [font=\tiny]  {$\dotsc $};
\draw (109.5,18.5) node  [font=\tiny]  {$\dotsc $};
\draw (139.5,18.5) node  [font=\tiny]  {$\dotsc $};
\draw (82,44) node  [font=\footnotesize]  {$=$};
\draw (174,44) node  [font=\footnotesize]  {$;$};
\draw (242,56) node  [font=\footnotesize]  {$x$};
\draw (216,24) node  [font=\footnotesize]  {$v$};
\draw (334,32) node  [font=\footnotesize]  {$x$};
\draw (308,64) node  [font=\footnotesize]  {$v$};
\draw (418,32) node  [font=\footnotesize]  {$x$};
\draw (456,56) node  [font=\footnotesize]  {$v$};
\draw (556,32) node  [font=\footnotesize]  {$v$};
\draw (518,56) node  [font=\footnotesize]  {$x$};
\draw (274,44) node  [font=\footnotesize]  {$=$};
\draw (374,44) node  [font=\footnotesize]  {$;$};
\draw (482,44) node  [font=\footnotesize]  {$=$};

\end{tikzpicture}

    \caption{Preservation of whiskerings, and centrality.}
    \label{fig:centrality}    \label{fig:whiskeringeq}
  \end{figure}
  Finally, we can check by string diagrams that the image of this functor is central, interchanging with any given $x \colon \R \tensor C \to \R \tensor D$ (\Cref{fig:centrality}, center and right).
\end{proof}

\begin{lem}
  \label{lemma:freeness}
  Let $(\hyV,\hyG)$ be an \polygraphCouple{} and consider the \effectfulCategory{} determined by \(\MON(\hyV) \to \EFF(\hyV,\hyG)\).
  Let $\cbaseV \to \ccatC$ be a strict effectful category endowed with an \polygraphCouple{} morphism $F \colon (\hyV,\hyG) \to \mathcal{U}(\cbaseV,\ccatC)$.
  There exists a unique strict effectful functor from $(\MON(\hyV) \to \EFF(\hyV,\hyG))$ to $(\cbaseV \to \ccatC)$ commuting with $F$ as an \polygraphCouple{} morphism.
\end{lem}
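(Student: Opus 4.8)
The plan is to reduce everything to the Joyal--Street universal property of $\MON$ together with the ``runtime on the left'' normal form for morphisms of $\MONRUN(\hyV,\hyG)$ that the braid clique formalism makes available. Write the target as $\cbaseV \to \ccatC$ and split $F \colon (\hyV,\hyG) \to \mathcal{U}(\cbaseV,\ccatC)$ into its two polygraph-morphism components, $u$ on values and $f$ on computations, agreeing on objects. By the Joyal--Street theorem $\StringMon \dashv \ForgetMon$, the polygraph morphism $u$ extends uniquely to a strict monoidal functor $G_0 \colon \MON(\hyV) \to \cbaseV$; this is forced to be the value component of any strict effectful functor commuting with $F$, so the only real work is to construct the effectful component $G \colon \EFF(\hyV,\hyG) \to \ccatC$.

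On objects, set $G(\Braid(A_0,\dots,A_n)) = FA_0 \tensor \cdots \tensor FA_n$, which is unambiguous because the tensor of objects is well-defined in a premonoidal category. A morphism of $\EFF(\hyV,\hyG)$ is, as explained before the lemma, its leftmost component: a morphism $\R \tensor A \to \R \tensor B$ of $\MONRUN(\hyV,\hyG)$ with $A = A_0 \tensor \cdots \tensor A_n$. Since $\R$ braids past every object, such a string diagram can be put into a normal form with the runtime always on the left: a finite vertical composite whose layers are alternately of the shape $\im_{\R} \tensor p$ for a morphism $p$ of $\MON(\hyV)$ (all internal runtime braidings being absorbed by bringing $\R$ to the left), and a single effectful generator $g \in \hyG$ whiskered by identities. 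I then define $G$ on this normal form by replacing each pure layer $\im_{\R} \tensor p$ with the (whiskered) image in $\ccatC$ of $G_0(p)$, replacing each whiskered generator with the correspondingly whiskered $f(g) \in \ccatC$, and composing in $\ccatC$; the whiskerings and composites are legitimate because $\ccatC$ is premonoidal and the image of $\cbaseV$ is central (Lemma \ref{lemma:identity-mon-eff}).

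The crux is showing $G$ is well-defined: independent of the chosen string-diagram representative (planar isotopy and interchange within a pure layer), of the braiding axioms used to slide $\R$ leftwards, and of the particular normal form. It suffices to check invariance under the elementary moves relating any two such presentations --- isotopy inside a pure layer, sliding a pure node or wire across a runtime braiding, and merging or splitting adjacent pure layers. The first is handled by the uniqueness clause of the Joyal--Street theorem applied to $\MON(\hyV)$ (two string diagrams for the same $\MON(\hyV)$-morphism have the same image under $G_0$); the second by centrality of the image of $\cbaseV$ in $\ccatC$; and the third by functoriality of $G_0$ and of whiskering in $\ccatC$. Once $G$ is well-defined, functoriality is immediate, since the normal form of a composite is the concatenation of the normal forms of the factors.

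Finally, $(G_0,G)$ is a strict effectful functor: it strictly preserves the premonoidal structure (associators and unitors are identities on both sides by Lemma \ref{lemma:eff-is-premonoidal}, and whiskering is preserved because $G$ was defined layerwise), and the square with the identity-on-objects functors commutes on generators, hence everywhere; moreover it commutes with $F$ as a \polygraphCouple{} morphism by construction. Uniqueness follows because any strict effectful functor commuting with $F$ must agree with $G_0$ on $\MON(\hyV)$ (Joyal--Street), must send each generator $g \in \hyG$ to $f(g)$ (forced by $F$), and hence agrees with $G$ on every morphism of $\EFF(\hyV,\hyG)$, each of which is obtained from generators and embedded pure morphisms by composition and whiskering via the normal form. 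I expect the well-definedness of $G$ in the third paragraph to be the only genuinely delicate point; everything else is bookkeeping with the premonoidal axioms.
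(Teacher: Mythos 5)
Your overall strategy---obtain $G_0$ from the Joyal--Street adjunction, define the effectful component on a ``runtime-on-the-left'' normal form, and check invariance under elementary moves---is viable and close in spirit to the paper's proof, which instead defines the assignment by structural induction on monoidal \emph{terms} and then verifies it against the complete list of relations presenting $\MONRUN(\hyV,\hyG)$ (functoriality and monoidality of the tensor, the category axioms, and the braiding axioms of \Cref{fig:runaxiom}). The gap in your version sits exactly where you suspect, and it is twofold. First, the existence of your normal form (effectful generators totally ordered along the runtime wire, with everything between two consecutive ones a whiskered morphism of $\MON(\hyV)$) is asserted rather than argued; the fact that makes it work is that every generator preserves the number of occurrences of $\R$, so any composite with one $\R$ in its boundary decomposes along intermediate boundaries containing exactly one $\R$, and the effectful generators are linearly ordered along that wire. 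Second, and more seriously, you never establish that your three elementary moves generate \emph{all} equalities between normal forms of the same morphism of $\MONRUN(\hyV,\hyG)$; that completeness claim is the entire content of well-definedness, and without pinning down a presentation of $\MONRUN(\hyV,\hyG)$ by generators and relations you have no handle on it. As stated the list is in fact incomplete: it omits the interchange of a pure node with an effectful generator on disjoint wires (an instance of functoriality of $\tensor$ in $\MONRUN(\hyV,\hyG)$), which moves a pure node between the layers above and below an effectful generator without crossing any runtime braiding and without being an isotopy ``inside a pure layer''. Its image in $\ccatC$ is again handled by centrality of the image of $\cbaseV$, so the repair is routine, but the move must be on the list before ``it suffices to check'' means anything.

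The clean way to close the gap is the paper's: since $\MONRUN(\hyV,\hyG)$ is \emph{presented} by the generators of \Cref{fig:rungen} modulo the monoidal-category axioms and the relations of \Cref{fig:runaxiom}, one defines the assignment clause by clause on terms (each clause being forced, which simultaneously yields uniqueness) and then checks that it respects each relation of the presentation---a finite, known checklist that replaces your unproven sufficiency-of-moves claim. Your uniqueness argument and the verification that $(G_0,G)$ is a strict effectful functor commuting with $F$ are fine modulo the above.
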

\begin{proof}
  By the same freeness result for monoidal categories, there already exists a unique strict monoidal functor $H_{0} \colon \MON(\hyV) \to \cbaseV$ that sends any object $A \in \obj{\hyV}$ to $F_{obj}(A)$.

  We will show there is a unique way to extend this functor together with the hypergraph assignment $\hyG \to \catC$ into a functor $H \colon \EFF(\hyV,\hyG) \to \ccatC$. Even when we know that the runtime can appear always on the left, we choose here to define the functor independently of the position of the runtime and later check that it preserves braidings. That is, giving such a functor amounts to give some mapping of morphisms containing the runtime $\R$ in some position in their input and output,
  \[f \colon A_{0} \tensor \dots \tensor \R \tensor \dots \tensor A_{n} \to B_{0} \tensor \dots \tensor \R \tensor \dots \tensor B_{m}\]
  to morphisms $H(f) \colon FA_{0} \tensor \dots  \tensor FA_{n} \to FB_{0} \tensor \dots  \tensor FB_{n}$ in $\ccatC$, in a way that preserves composition, whiskerings, inclusions from $\MON(\hyV)$, and that is invariant to composition with braidings.
  In order to define this mapping, we will perform structural induction over the monoidal terms of the runtime monoidal category of the form
  $\MONRUN(\hyV,\hyG)(A_{0} \tensor \dots \tensor \R^{(i)} \tensor \dots \tensor A_{n}, \R \tensor B_{0} \tensor \dots  \tensor \R^{(j)} \tensor  \dots \tensor B_{m})$ and show that it is the only mapping with these properties (\Cref{fig:assignment}).

  Monoidal terms in a strict, freely presented, monoidal category are formed by identities ($\im$), composition $(\comp)$, tensoring $(\otimes)$, and some generators (in this case, in \Cref{fig:rungen}). Here is where we apply the main result of Joyal and Street \cite{joyal91}: reasoning about monoidal terms is the same as reasoning about string diagrams; thus, we can prove a result about string diagrams reasoning inductively on monoidal terms. Monoidal terms are subject to \emph{(i)} functoriality of the tensor, $\im \tensor \im = \im$ and $(f \comp g) \tensor (h \comp k) = (f \tensor h) \comp (g \tensor k)$; \emph{(ii)} associativity and unitality of the tensor, $f \tensor \im_{I} = f$ and $f \tensor (g \tensor h) = (f \tensor g) \tensor h$; \emph{(iii)} the usual unitality, $f \comp \im = f$ and $\im \comp f = f$ and associativity $f \comp (g \comp h) = (f \comp g) \comp h$; \emph{(iv)} the axioms of our presentation (in this case, in \Cref{fig:runaxiom}).
  \begin{figure}[H]
    \centering
    \begin{align*}
      & H\left( \IconId{A} \right) = \mathrm{id}_A; \quad
      H\left( \IconRun{} \right) = \mathrm{id}_I; \quad
      H\left( \EmptyBox{} \right) = \mathrm{id}_I; \quad
      H\left( \IconComp{x}{y} \right) = H\left( \IconEff{x} \right) \comp H\left( \IconEff{y} \right);
      \\ & H\left( \IconEff{x} \IconPure{u}\right) =
      (\mathrm{id} \otimes H_0\left(\IconPure{u}\right)) \comp (H\left(\IconEff{x}\right) \otimes \mathrm{id}) =
      (H\left(\IconEff{x}\right) \otimes \mathrm{id}) \comp (\mathrm{id} \otimes H_0\left(\IconPure{u}\right)); \\
    & H\left(\IconPure{u} \IconEff{x}\right) =
    (H_0\left(\IconPure{u}\right) \otimes \mathrm{id}) \comp (\mathrm{id} \otimes H\left(\IconEff{x}\right)) =
    (\mathrm{id} \otimes H\left(\IconEff{x}\right)) \comp (H_0\left(\IconPure{u}\right) \otimes \mathrm{id}); \\
    & H\left(\IconEff{f}\right) = F(f); \quad
    H\left(\IconPure{v}\right) = F_0(v)^\circ; \quad
    H\left(\IconSwapOne{}\right) =
    H\left(\IconSwapTwo{}\right) = \mathrm{id};
    \end{align*}
    \caption{Assignment on morphisms, defined by structural induction on terms.}
    \label{fig:assignment}
  \end{figure}

  \begin{itemize}
    \item
      If the term is an identity, it can be \emph{(i)} an identity on an object of the \effectfulPolygraph{}, $A \in \obj{(\hyV,\hyG)}$, in which case it must be mapped to the same identity by functoriality, $H(\im_{A}) = \im_{A}$; \emph{(ii)} an identity on the runtime, in which case it must be mapped to the identity on the unit object, $H(\im_{\R}) = \im_{I}$; or \emph{(iii)} an identity on the unit object, in which case it must be mapped to the identity on the unit, $H(\im_{I}) = \im_{I}$.

    \item
      If the term is a composition, $(f \comp g) \colon A_{0} \tensor \dots \tensor \R \tensor \dots \tensor A_{n} \to C_{0} \tensor \dots \tensor \R \tensor \dots \tensor C_{k}$, it must be along a boundary of the form $B_{0} \tensor \dots \tensor \R \tensor \dots \tensor B_{m}$: this is because every generator leaves the number of runtimes, $\R$, invariant.
      Thus, each one of the components determines itself a braid clique morphism.
      We must preserve composition of braid clique morphisms, so we must map $H(f \comp g) = H(f) \comp H(g)$.

    \item
      If the term is a tensor of two terms,  $(x \tensor u) \colon A_{0} \tensor \dots \tensor \R \tensor \dots \tensor A_{n} \to B_{0} \tensor \dots \tensor \R \tensor \dots \tensor B_{m}$, then only one of them was a term taking $\R$ as input and output (without loss of generality, assume it to be the first one) and the other was not: again, by construction, there are no morphisms taking one $\R$ as input and producing none, or viceversa.
      We split this morphism into $x \colon A_{0} \tensor\dots\tensor \R \tensor \dots \tensor A_{i-1} \to B_{0} \tensor \dots \tensor \R \tensor \dots \tensor B_{j-1}$ and $u \colon A_{i} \tensor \dots \tensor A_{n} \to B_{j} \tensor \dots \tensor B_{m}$.

      Again by structural induction, this time over terms $u \colon A_{i} \tensor \dots \tensor A_{n} \to B_{j} \tensor \dots \tensor B_{m}$,
      we know that the morphism must be either a generator in $\hyV(A_{i},\dots,A_{n};B_{j},\dots,B_{n})$ or a composition and tensoring of them. That is, $u$ is a morphism in the image of $\MON(\hyV)$, and it must be mapped according to the functor $H_{0} \colon \MON(\hyV) \to \cbaseV$.

      By induction hypothesis, we know how to map the morphism $x \colon A_{0} \tensor \dots\tensor  \R \tensor \dots \tensor A_{i-1} \to B_{0} \tensor \dots \tensor \R \tensor \dots \tensor B_{j-1}$.
      This means that, given any tensoring $x \tensor u$, we must map it to $H(x \tensor u) = (H(x) \tensor \im) \comp (\im \tensor H_{0}(u)) =  (\im \tensor H_{0}(u)) \comp (H(x) \tensor \im)$, where $H_{0}(u)$ is central.

    \item
      If the string diagram consists of a single generator, $f \colon \R \tensor A \to \R \tensor B$, it can only come from a generator $$f \in \Run(\hyV,\hyG)(\R,A_{0},\dots,A_{n};\R, B_{0},\dots,B_{m}) = \hyG(A_{0},\dots,A_{n}; B_{0},\dots,B_{m}),$$
      which must be mapped to $H(f) = F(f) \in \ccatC(A_{0} \tensor \dots \tensor A_{n}, B_{0} \tensor \dots \tensor B_{m})$.
      If the string diagram consists of a single braiding, it must be mapped to the identity, because the want the assignment to be invariant to braidings.

  \end{itemize}

  Now, we need to prove that this assignment is well-defined with respect to the axioms of these monoidal terms. Our reasoning follows \Cref{fig:assignmentwell}.

  \begin{figure}
    \centering
    \textsc{Functoriality of the Tensor.}
    \begin{align*}
      & (H\left(\EmptyBox{}\right) \otimes \mathrm{id}) \comp (\mathrm{id} \otimes H_0\left(\EmptyBox{}\right)) =
      H\left(\EmptyBox{}\right); \\
      & (H\left(\IconEff{x} \otimes \mathrm{id}\right)) \comp
      (\mathrm{id} \otimes H_0\left( \IconPure{u} \right)) \comp
      (H\left(\IconEff{y} \right) \otimes \mathrm{id}) \comp
      (\mathrm{id} \otimes H_0\left( \IconPure{v} \right)) =\\
      & ((H\left(\IconEff{x}\right) \comp H\left(\IconEff{y}\right)) \otimes \mathrm{id}) \comp
      (\mathrm{id} \otimes (H_0\left( \IconPure{u} \right) \comp H_0\left( \IconPure{v} \right)));
    \end{align*}
    \textsc{Monoidality of the Tensor.}
    \begin{align*}
      & (H\left(\IconEff{x}\right) \otimes \mathrm{id}_X) \comp (\mathrm{id} \otimes H_0\left(\EmptyBox{}\right)) =
      H\left(\IconEff{x}\right); \\
      & (((H\left(\IconEff{x}\right) \otimes \mathrm{id}_X) \comp (\mathrm{id} \otimes H_0\left(\IconPure{u}\right))) \otimes \mathrm{id}) \comp (\mathrm{id} \otimes H_0\left(\IconPure{v}\right)) = \\
      & (H\left(\IconEff{x}\right) \otimes \mathrm{id}_X) \comp (\mathrm{id} \otimes H_0\left(\IconPure{u}\right) \otimes H_0\left(\IconPure{v}\right));
    \end{align*}
    \textsc{Category Axioms.}
    \begin{align*}
      & (H\left(\IconEff{x}\right) \comp
         H\left(\IconEff{y}\right) )  \comp H\left(\IconEff{z}\right) =
         H\left(\IconEff{x}\right) \comp (H\left(\IconEff{y}\right) \comp H\left(\IconEff{z}\right)); \\
      & H\left(\IconEff{x}\right) \comp H\left(\IconThree{}\right) =
      H\left(\IconThree{}\right) \comp H\left(\IconEff{x}\right) =
      H\left(\IconEff{x}\right);
    \end{align*}
    \textsc{Runtime Axioms.}
    \begin{align*}
      & H\left(\IconSwapOne{}\right) \comp H\left(\IconSwapTwo{}\right) = \mathrm{id} ; \quad
      H\left(\IconSwapTwo{}\right) \comp H\left(\IconSwapOne{}\right) = \mathrm{id} ; \\
      & (H\left( \IconRun{}\right) \otimes \mathrm{id}) \comp
      (\mathrm{id} \otimes H_0\left(\IconPure{v}\right)) \comp
      H\left(\IconOneToThree{}\right) =
      H\left(\IconOneToThree{}\right) \comp
      (\mathrm{id} \otimes H_0\left(\IconPure{v}\right)) \comp
      (H\left( \IconRun{}\right) \otimes \mathrm{id}) ; \\
      & (\mathrm{id} \otimes H\left( \IconRun{}\right)) \comp
      (H_0\left(\IconPure{v}\right) \otimes \mathrm{id})  \comp
      H\left(\IconThreeToOne{}\right) =
      H\left(\IconThreeToOne{}\right) \comp
      (\mathrm{id} \otimes H_0\left(\IconPure{v}\right)) \comp
      (H\left( \IconRun{}\right)\otimes \mathrm{id}) ;
    \end{align*}
    \caption{The assignment is well defined.}
    \label{fig:assignmentwell}
  \end{figure}

  \begin{itemize}
    \item
    The tensor is functorial. We know that $H(\im \tensor \im) = H(\im)$, both are identities and that can be formally proven by induction on the number of wires.
    Now, for the interchange law, consider a quartet of morphisms that can be composed or tensored first and such that, without loss of generality, we assume the runtime to be on the left side. Then, we can use centrality to argue that
    \begin{align*}
      H((x \tensor u) \comp (y \tensor v)) &= (H(x) \tensor \im) \comp (\im \tensor H_{0}(u)) \comp  (H(y) \tensor \im) \comp (\im \tensor H_{0}(v))
      \\&= ((H(x)\comp H(y)) \tensor \im) \comp (\im \tensor (H_{0}(u) \comp H_{0}(v)))
      \\&= H((x \comp y) \tensor (u \comp v)).
    \end{align*}

    \item
    The tensor is monoidal. We know that $H(x \tensor \im_{I}) = (H(x) \tensor \im_{I}) \comp (\im \tensor \im_{I}) = H(x)$.
    Now, for associativity, consider a triple of morphisms that can be tensored in two ways and such that, without loss of generality, we assume the runtime to be on the left side.
    Then, we can use centrality to argue that
    \begin{align*}
      H((x \tensor u) \tensor v)
      &=  (((H(x) \tensor \im)  \comp (\im \tensor H_{0}(u))) \tensor \im) \comp \im \tensor H_{0}(v)
      \\&= (H(x) \tensor \im) \comp (\im \tensor H_{0}(u) \tensor H_{0}(v))
      \\&= H(x \tensor (u \tensor v)).
    \end{align*}

    \item
    The terms form a category. And indeed, it is true by construction that $H(x \comp (y \comp z)) = H((x \comp y) \comp z)$ and also that $H(x \comp \im) = H(x)$, because $H$ preserves composition.

    \item
    The runtime category enforces some axioms. The composition of two braidings is mapped to the identity by the fact that $H$ preserves composition and sends both to the identity.  Both sides of the braid naturality over a morphism $v$ are mapped to $H_{0}(v)$; with the multiple braidings being mapped again to the identity.
  \end{itemize}
  Thus, $H$ is well-defined and it defines the only possible assignment and the only possible strict premonoidal functor.
\end{proof}

\begin{thm}[Runtime as a resource]
  \label{theorem:runtime-as-a-resource}
  The free strict \effectfulCategory{} over an \polygraphCouple{} $(\hyV,\hyG)$ is $\MON(\hyV) \to \EFF(\hyV,\hyG)$. Its morphisms $A \to B$ are in bijection with the morphisms $\R \tensor A \to \R \tensor B$ of the runtime monoidal category,
  \[\EFF(\hyV,\hyG)(A,B) \cong \MONRUN(\hyV,\hyG)(\R \tensor A, \R \tensor B).\]
\end{thm}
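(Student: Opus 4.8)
The plan is to assemble the three preceding lemmas and then read the hom-set bijection directly off the construction of $\EFF(\hyV,\hyG)$. By \Cref{lemma:eff-is-premonoidal} and \Cref{lemma:identity-mon-eff}, the identity-on-objects functor $\MON(\hyV) \to \EFF(\hyV,\hyG)$ is a strict \effectfulCategory{}. I would then exhibit the canonical unit: an \polygraphCouple{} morphism $\eta \colon (\hyV,\hyG) \to \mathcal{U}(\MON(\hyV) \to \EFF(\hyV,\hyG))$, which on the \pure{} part $\hyV$ is the unit of the free monoidal category (the inclusion of generators into $\MON(\hyV)$) and on the \effectful{} part $\hyG$ sends a generator $f \in \hyG(A_0,\dots,A_n;B_0,\dots,B_m)$ to the braid clique morphism represented by the corresponding generator $f \colon \R \tensor A \to \R \tensor B$ of $\MONRUN(\hyV,\hyG)$ from \Cref{fig:rungen}; that $\eta$ is a well-typed \polygraphCouple{} morphism is immediate, as it is identity-on-objects and respects arities by construction. \Cref{lemma:freeness} is then precisely the universal property: every \polygraphCouple{} morphism from $(\hyV,\hyG)$ into the underlying \polygraphCouple{} of a strict \effectfulCategory{} factors uniquely through $\eta$ by a strict effectful functor. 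Hence $(\MON(\hyV) \to \EFF(\hyV,\hyG),\eta)$ is the free strict \effectfulCategory{} on $(\hyV,\hyG)$; equivalently, it is the value at $(\hyV,\hyG)$ of a left adjoint to $\mathcal{U} \colon \EffCatStr \to \EffPolyGraph$, yielding the adjunction announced in the introduction.

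For the hom-set bijection, I would unfold the definition of $\EFF(\hyV,\hyG)$. An object is a braid clique $\Braid(A_0,\dots,A_n)$, and a morphism into $\Braid(B_0,\dots,B_m)$ is a braid clique morphism, i.e.\ a family $\{f_{ik}\}$ of morphisms of $\MONRUN(\hyV,\hyG)$ commuting with all runtime braidings. As noted after the definition of braid clique morphism, such a family is fully determined by its leftmost component $f_{00} \colon \R \tensor A \to \R \tensor B$, where $A = A_0 \tensor \dots \tensor A_n$. Conversely, any morphism $g \colon \R \tensor A \to \R \tensor B$ of $\MONRUN(\hyV,\hyG)$ extends to a braid clique morphism by pre- and post-composition with the braidings that reposition $\R$, and this extension satisfies the required coherence $f_{ij} \comp \sigma_{jk} = \sigma_{il} \comp f_{lk}$ precisely because the braidings within a braid clique all commute --- a consequence of the braiding axioms imposed on $\R$ in \Cref{fig:runaxiom}. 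These two assignments are mutually inverse, so $\EFF(\hyV,\hyG)(A,B) \cong \MONRUN(\hyV,\hyG)(\R \tensor A, \R \tensor B)$; moreover this bijection respects composition and identities, since those were defined in the proof of \Cref{lemma:eff-is-premonoidal} as the composition and identities of leftmost components in $\MONRUN(\hyV,\hyG)$.

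The substantive work --- existence and uniqueness of the extending functor --- is already discharged by \Cref{lemma:freeness}, so the points that remain are essentially bookkeeping. The main one is that the passage to the leftmost component really is a bijection: one must check that extending $g$ back to a full braid clique morphism does not depend on which sequence of runtime braidings is used, and this is exactly where the coherence of the braid clique (hence the braiding axioms on $\R$ in $\MONRUN(\hyV,\hyG)$) is essential. The only other thing to confirm is that $\eta$ together with \Cref{lemma:freeness} matches the categorical definition of a free object and assembles into a genuine adjunction $\EffCatStr \rightleftarrows \EffPolyGraph$.
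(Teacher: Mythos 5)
Your proposal is correct and follows essentially the same route as the paper: it assembles \Cref{lemma:eff-is-premonoidal}, \Cref{lemma:identity-mon-eff}, and \Cref{lemma:freeness} to establish the universal property, and reads the hom-set bijection off the fact that a braid clique morphism is determined by its leftmost component. The extra details you supply (the explicit unit $\eta$ and the check that the extension of a leftmost component to a braid clique morphism is well defined) are consistent with, and slightly more explicit than, what the paper records.
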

\begin{proof}
  We must first show that $\MON(\hyV) \to \EFF(\hyV,\hyG)$ is an \effectfulCategory{}.
  The first step is to see that $\EFF(\hyV,\hyG)$ forms a \premonoidalCategory{}; we apply \Cref{lemma:eff-is-premonoidal}.
  We also know that $\MON(\hyV)$ is a monoidal category: in fact, a strict, freely generated one.
  There exists an identity on objects functor, \(\MON(\hyV) \to \EFF(\hyV,\hyG)\), that strictly preserves the premonoidal structure and centrality, by \Cref{lemma:identity-mon-eff}.

  Let us now show that it is the free one over the \polygraphCouple{} $(\hyV,\hyG)$.
  Let $\cbaseV \to \ccatC$ be an effectful category, with an \polygraphCouple{} map $F \colon (\hyV,\hyG) \to \mathcal{U}(\cbaseV,\ccatC)$.
  We can construct a unique effectful functor from $(\MON(\hyV) \to \EFF(\hyV,\hyG))$ to $(\cbaseV \to \ccatC)$ giving its universal property; this is \Cref{lemma:freeness}, which concludes the proof.
\end{proof}

\begin{cor}[String diagrams for effectful categories]
  \label{cor:string-effectfuls}
  We can use string diagrams for \effectfulCategories{}, quotiented under the same isotopy as for monoidal categories, provided that we do represent the runtime as an extra wire that needs to be the input and output of every effectful morphism.
\end{cor}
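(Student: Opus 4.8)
The plan is to read this off Theorem~\ref{theorem:runtime-as-a-resource} by combining it with the Joyal--Street presentation of monoidal categories by string diagrams. First I would observe that $\MONRUN(\hyV,\hyG)$ is itself a category of string diagrams: by definition it is the strict monoidal category freely generated by the \polygraph{} $\Run(\hyV,\hyG)$, quotiented by the braiding axioms of \Cref{fig:runaxiom}. By the Joyal--Street theorem, the free strict monoidal category over a \polygraph{} has as morphisms exactly the \stringDiagrams{} over that \polygraph{}, up to planar isotopy. Hence the morphisms of $\MONRUN(\hyV,\hyG)$ are precisely those string diagrams over $\Run(\hyV,\hyG)$ -- the ones in which every effectful generator carries an extra runtime wire in its input and output, while pure generators do not -- taken up to planar isotopy together with the further relations allowing the runtime wire to slide past any node and to braid with any other wire.

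Next I would compose this description with the hom-set bijection of Theorem~\ref{theorem:runtime-as-a-resource}: for any \polygraphCouple{} $(\hyV,\hyG)$ we have $\EFF(\hyV,\hyG)(A,B)\cong\MONRUN(\hyV,\hyG)(\R\tensor A,\R\tensor B)$, so every morphism of the free strict \effectfulCategory{} is represented by a monoidal \stringDiagram{} over $\Run(\hyV,\hyG)$ whose upper and lower boundaries each carry exactly one runtime wire. Soundness -- that isotopic (and braiding-related) diagrams denote equal morphisms in \emph{any} \effectfulCategory{} -- then follows from the universal property: the unique strict effectful functor out of $\MON(\hyV)\to\EFF(\hyV,\hyG)$ provided by Theorem~\ref{theorem:runtime-as-a-resource} must send diagrams equal in the free category to equal morphisms in the target. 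Completeness is immediate because, by the constructions in Lemmas~\ref{lemma:eff-is-premonoidal} and~\ref{lemma:identity-mon-eff}, the free effectful category is literally built out of these diagrams.

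The one point that needs care -- and the main obstacle, modest as it is for a corollary -- is confirming that the extra quotient is exactly ``the same isotopy as for monoidal categories'' with nothing more: the runtime being a braided object forces us also to allow it to be slid freely over nodes and wires, and we must check that these braiding moves do not introduce identifications beyond what one intuitively expects of such a wire, in particular that the placement of the runtime on the boundary is immaterial. This is precisely what the braid clique bookkeeping and the invariance-under-braidings arguments of Lemmas~\ref{lemma:eff-is-premonoidal}--\ref{lemma:freeness} establish; granting those, the corollary follows without further work.
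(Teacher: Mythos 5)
Your derivation is correct and is essentially the argument the paper intends: the corollary is read off directly from Theorem~\ref{theorem:runtime-as-a-resource} together with the Joyal--Street description of the free monoidal category as string diagrams, and the paper itself offers no further proof. Your closing caveat---that the quotient is planar isotopy \emph{plus} the braiding axioms for the runtime wire, discharged by the braid-clique arguments of Lemmas~\ref{lemma:eff-is-premonoidal}--\ref{lemma:freeness}---is exactly the right point to flag and is consistent with how the paper handles it.
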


\section{Example: Global State}%
\label{sec:exampleGlobalState}%

Let us provide an example of reasoning that uses string diagrams for
\effectfulCategories{}. Imperative programs are characterized by the presence of
a global state that can be mutated. Reading or writing to this global state
constitutes an effectful computation: the order of operations that affect some
global state cannot be changed. Let us propose a simple theory of global state
(\Cref{fig:axioms-noninterchange-state}) and let us show that it is enough to
capture the phenomenon of \emph{race conditions}.
These are an adaptation of the \emph{lens laws}
\cite{foster07combinators}.

\begin{figure}[!ht]

\tikzset{every picture/.style={line width=0.75pt}} %

\begin{tikzpicture}[x=0.75pt,y=0.75pt,yscale=-1,xscale=1]
\draw [color={rgb, 255:red, 0; green, 0; blue, 0 }  ,draw opacity=1 ]   (180,84) .. controls (192.22,83.71) and (191.56,86.38) .. (192,96) ;
\draw    (80,80) -- (80.01,96) ;
\draw    (236,12) -- (236,20) ;
\draw  [color={rgb, 255:red, 0; green, 0; blue, 0 }  ,draw opacity=1 ][fill={rgb, 255:red, 0; green, 0; blue, 0 }  ,fill opacity=1 ] (22,32) .. controls (22,30.9) and (22.9,30) .. (24,30) .. controls (25.11,30) and (26,30.9) .. (26,32) .. controls (26,33.1) and (25.11,34) .. (24,34) .. controls (22.9,34) and (22,33.1) .. (22,32) -- cycle ;
\draw    (24,32) -- (24.01,40) ;
\draw    (40,24) -- (40.01,40) ;
\draw  [color={rgb, 255:red, 0; green, 0; blue, 0 }  ,draw opacity=1 ][fill={rgb, 255:red, 0; green, 0; blue, 0 }  ,fill opacity=1 ] (38,24) .. controls (38,22.9) and (38.9,22) .. (40,22) .. controls (41.1,22) and (42,22.9) .. (42,24) .. controls (42,25.1) and (41.1,26) .. (40,26) .. controls (38.9,26) and (38,25.1) .. (38,24) -- cycle ;
\draw  [draw opacity=0] (44,4) -- (68,4) -- (68,44) -- (44,44) -- cycle ;
\draw  [color={rgb, 255:red, 0; green, 0; blue, 0 }  ,draw opacity=1 ][fill={rgb, 255:red, 0; green, 0; blue, 0 }  ,fill opacity=1 ] (78,20) .. controls (78,18.9) and (78.9,18) .. (80,18) .. controls (81.1,18) and (82,18.9) .. (82,20) .. controls (82,21.1) and (81.1,22) .. (80,22) .. controls (78.9,22) and (78,21.1) .. (78,20) -- cycle ;
\draw    (80,20) -- (80,28) ;
\draw  [color={rgb, 255:red, 0; green, 0; blue, 0 }  ,draw opacity=1 ][fill={rgb, 255:red, 0; green, 0; blue, 0 }  ,fill opacity=1 ] (78,28) .. controls (78,26.9) and (78.9,26) .. (80,26) .. controls (81.11,26) and (82,26.9) .. (82,28) .. controls (82,29.1) and (81.11,30) .. (80,30) .. controls (78.9,30) and (78,29.1) .. (78,28) -- cycle ;
\draw [color={rgb, 255:red, 0; green, 0; blue, 0 }  ,draw opacity=1 ]   (80,28) .. controls (91.71,28.79) and (91.43,28.79) .. (92,40) ;
\draw [color={rgb, 255:red, 0; green, 0; blue, 0 }  ,draw opacity=1 ]   (80,28) .. controls (68.57,27.65) and (68.29,29.36) .. (68,40) ;
\draw  [color={rgb, 255:red, 0; green, 0; blue, 0 }  ,draw opacity=1 ][fill={rgb, 255:red, 0; green, 0; blue, 0 }  ,fill opacity=1 ] (134.01,32) .. controls (134.01,30.9) and (134.9,30) .. (136.01,30) .. controls (137.11,30) and (138.01,30.9) .. (138.01,32) .. controls (138.01,33.1) and (137.11,34) .. (136.01,34) .. controls (134.9,34) and (134.01,33.1) .. (134.01,32) -- cycle ;
\draw    (136,16) -- (136.01,32) ;
\draw  [color={rgb, 255:red, 0; green, 0; blue, 0 }  ,draw opacity=1 ][fill={rgb, 255:red, 0; green, 0; blue, 0 }  ,fill opacity=1 ] (134,16) .. controls (134,14.9) and (134.9,14) .. (136,14) .. controls (137.1,14) and (138,14.9) .. (138,16) .. controls (138,17.1) and (137.1,18) .. (136,18) .. controls (134.9,18) and (134,17.1) .. (134,16) -- cycle ;
\draw  [draw opacity=0] (144,4) -- (168,4) -- (168,44) -- (144,44) -- cycle ;
\draw  [dash pattern={on 4.5pt off 4.5pt}] (168,12) -- (188,12) -- (188,36) -- (168,36) -- cycle ;
\draw  [color={rgb, 255:red, 0; green, 0; blue, 0 }  ,draw opacity=1 ][fill={rgb, 255:red, 255; green, 255; blue, 255 }  ,fill opacity=1 ] (234,20) .. controls (234,18.9) and (234.9,18) .. (236,18) .. controls (237.11,18) and (238,18.9) .. (238,20) .. controls (238,21.1) and (237.11,22) .. (236,22) .. controls (234.9,22) and (234,21.1) .. (234,20) -- cycle ;
\draw    (252,12) -- (252.01,28) ;
\draw  [color={rgb, 255:red, 0; green, 0; blue, 0 }  ,draw opacity=1 ][fill={rgb, 255:red, 255; green, 255; blue, 255 }  ,fill opacity=1 ] (250.01,28) .. controls (250.01,26.9) and (250.9,26) .. (252.01,26) .. controls (253.11,26) and (254.01,26.9) .. (254.01,28) .. controls (254.01,29.1) and (253.11,30) .. (252.01,30) .. controls (250.9,30) and (250.01,29.1) .. (250.01,28) -- cycle ;
\draw  [color={rgb, 255:red, 0; green, 0; blue, 0 }  ,draw opacity=1 ][fill={rgb, 255:red, 0; green, 0; blue, 0 }  ,fill opacity=1 ] (282,20) .. controls (282,18.9) and (282.9,18) .. (284,18) .. controls (285.11,18) and (286,18.9) .. (286,20) .. controls (286,21.1) and (285.11,22) .. (284,22) .. controls (282.9,22) and (282,21.1) .. (282,20) -- cycle ;
\draw    (284,12) -- (284,20) ;
\draw    (300,12) -- (300.01,28) ;
\draw  [color={rgb, 255:red, 0; green, 0; blue, 0 }  ,draw opacity=1 ][fill={rgb, 255:red, 255; green, 255; blue, 255 }  ,fill opacity=1 ] (298.01,28) .. controls (298.01,26.9) and (298.91,26) .. (300.01,26) .. controls (301.12,26) and (302.01,26.9) .. (302.01,28) .. controls (302.01,29.1) and (301.12,30) .. (300.01,30) .. controls (298.91,30) and (298.01,29.1) .. (298.01,28) -- cycle ;
\draw  [draw opacity=0] (256,4) -- (280,4) -- (280,44) -- (256,44) -- cycle ;
\draw  [color={rgb, 255:red, 0; green, 0; blue, 0 }  ,draw opacity=1 ][fill={rgb, 255:red, 255; green, 255; blue, 255 }  ,fill opacity=1 ] (78.01,96) .. controls (78.01,94.9) and (78.9,94) .. (80.01,94) .. controls (81.11,94) and (82.01,94.9) .. (82.01,96) .. controls (82.01,97.1) and (81.11,98) .. (80.01,98) .. controls (78.9,98) and (78.01,97.1) .. (78.01,96) -- cycle ;
\draw  [color={rgb, 255:red, 0; green, 0; blue, 0 }  ,draw opacity=1 ][fill={rgb, 255:red, 0; green, 0; blue, 0 }  ,fill opacity=1 ] (78,80) .. controls (78,78.9) and (78.9,78) .. (80,78) .. controls (81.1,78) and (82,78.9) .. (82,80) .. controls (82,81.1) and (81.1,82) .. (80,82) .. controls (78.9,82) and (78,81.1) .. (78,80) -- cycle ;
\draw  [draw opacity=0] (88,68) -- (112,68) -- (112,108) -- (88,108) -- cycle ;
\draw  [dash pattern={on 4.5pt off 4.5pt}] (112,76) -- (132,76) -- (132,100) -- (112,100) -- cycle ;
\draw    (220,76) -- (220,84) ;
\draw  [color={rgb, 255:red, 0; green, 0; blue, 0 }  ,draw opacity=1 ][fill={rgb, 255:red, 255; green, 255; blue, 255 }  ,fill opacity=1 ] (218,84) .. controls (218,82.9) and (218.9,82) .. (220,82) .. controls (221.1,82) and (222,82.9) .. (222,84) .. controls (222,85.1) and (221.1,86) .. (220,86) .. controls (218.9,86) and (218,85.1) .. (218,84) -- cycle ;
\draw  [color={rgb, 255:red, 0; green, 0; blue, 0 }  ,draw opacity=1 ][fill={rgb, 255:red, 0; green, 0; blue, 0 }  ,fill opacity=1 ] (218,96) .. controls (218,94.9) and (218.9,94) .. (220,94) .. controls (221.1,94) and (222,94.9) .. (222,96) .. controls (222,97.1) and (221.1,98) .. (220,98) .. controls (218.9,98) and (218,97.1) .. (218,96) -- cycle ;
\draw    (220,96) -- (220,104) ;
\draw    (180,76) -- (180,84) ;
\draw  [draw opacity=0] (192,68) -- (216,68) -- (216,108) -- (192,108) -- cycle ;
\draw [color={rgb, 255:red, 0; green, 0; blue, 0 }  ,draw opacity=1 ]   (180,84) .. controls (168.57,83.65) and (167.55,86.38) .. (168,96) ;
\draw  [color={rgb, 255:red, 0; green, 0; blue, 0 }  ,draw opacity=1 ][fill={rgb, 255:red, 255; green, 255; blue, 255 }  ,fill opacity=1 ] (166,96) .. controls (166,94.9) and (166.89,94) .. (168,94) .. controls (169.1,94) and (170,94.9) .. (170,96) .. controls (170,97.1) and (169.1,98) .. (168,98) .. controls (166.89,98) and (166,97.1) .. (166,96) -- cycle ;
\draw  [color={rgb, 255:red, 0; green, 0; blue, 0 }  ,draw opacity=1 ][fill={rgb, 255:red, 0; green, 0; blue, 0 }  ,fill opacity=1 ] (178,84) .. controls (178,82.9) and (178.9,82) .. (180,82) .. controls (181.1,82) and (182,82.9) .. (182,84) .. controls (182,85.1) and (181.1,86) .. (180,86) .. controls (178.9,86) and (178,85.1) .. (178,84) -- cycle ;
\draw    (192,96) -- (192,104) ;
\draw  [draw opacity=0] (96,4) -- (120,4) -- (120,44) -- (96,44) -- cycle ;
\draw  [draw opacity=0] (188,4) -- (212,4) -- (212,44) -- (188,44) -- cycle ;
\draw  [draw opacity=0] (304,4) -- (328,4) -- (328,44) -- (304,44) -- cycle ;
\draw  [draw opacity=0] (224,68) -- (248,68) -- (248,108) -- (224,108) -- cycle ;
\draw  [draw opacity=0] (132,68) -- (156,68) -- (156,108) -- (132,108) -- cycle ;

\draw (56,24) node  [font=\footnotesize]  {$\overset{(i)}{=}$};
\draw (29,46.4) node [anchor=north west][inner sep=0.75pt]  [font=\small]  {$get-get$};
\draw (156,24) node  [font=\footnotesize]  {$\overset{(ii)}{=}$};
\draw (117,46.4) node [anchor=north west][inner sep=0.75pt]  [font=\small]  {$get-discard$};
\draw (268,24) node  [font=\footnotesize]  {$\overset{(iii)}{=}$};
\draw (236,45.4) node [anchor=north west][inner sep=0.75pt]  [font=\small]  {$put-put$};
\draw (100,88) node  [font=\footnotesize]  {$\overset{(iv)}{=}$};
\draw (73,110.4) node [anchor=north west][inner sep=0.75pt]  [font=\small]  {$get-put$};
\draw (204,88) node  [font=\footnotesize]  {$\overset{(v)}{=}$};
\draw (177,109.4) node [anchor=north west][inner sep=0.75pt]  [font=\small]  {$put-get$};
\draw (104,24) node  [font=\footnotesize]  {$;$};
\draw (200,24) node  [font=\footnotesize]  {$;$};
\draw (316,24) node  [font=\footnotesize]  {$;$};
\draw (236,88) node  [font=\footnotesize]  {$;$};
\draw (144,88) node  [font=\footnotesize]  {$;$};

\end{tikzpicture}
   \caption{Non-interchanging diagrams for the axioms of global state.}%
  \label{fig:axioms-noninterchange-state}
\end{figure}

\begin{defi}
  The theory of \emph{global state} is generated by the \effectfulPolygraph{}
  with a single object $X$; two pure generators, $(\blackComultiplication{}) ፡ X
  → X ⊗ X$ and $(\blackComonoidUnit{}) ፡ X → I$, representing $\mathsf{copy}$
  and $\mathsf{discard}$, quotiented by the comonoid axioms; and two effectful
  generators, $(\blackUnit{}) ፡ I → X$ and  $(\whiteComonoidUnit) ፡ X → I$,
  representing $\Get$ and $\Put$ (in \Cref{fig:generators-global-state}),
  quotiented by the equations in \Cref{fig:axioms-global-state}.
\end{defi}

\begin{figure}[!ht]
  \centering

\tikzset{every picture/.style={line width=0.75pt}} %

\begin{tikzpicture}[x=0.75pt,y=0.75pt,yscale=-1,xscale=1]
\draw [color={rgb, 255:red, 191; green, 97; blue, 106 }  ,draw opacity=1 ]   (184,24) -- (184,36) ;
\draw [color={rgb, 255:red, 191; green, 97; blue, 106 }  ,draw opacity=1 ]   (128,24) .. controls (116.57,23.65) and (116.28,25.36) .. (116,36) ;
\draw [color={rgb, 255:red, 191; green, 97; blue, 106 }  ,draw opacity=1 ]   (128,12) -- (128,24) ;
\draw  [color={rgb, 255:red, 0; green, 0; blue, 0 }  ,draw opacity=1 ][fill={rgb, 255:red, 0; green, 0; blue, 0 }  ,fill opacity=1 ] (126,24) .. controls (126,22.9) and (126.9,22) .. (128,22) .. controls (129.1,22) and (130,22.9) .. (130,24) .. controls (130,25.1) and (129.1,26) .. (128,26) .. controls (126.9,26) and (126,25.1) .. (126,24) -- cycle ;
\draw  [draw opacity=0] (144,4) -- (168,4) -- (168,44) -- (144,44) -- cycle ;
\draw [color={rgb, 255:red, 0; green, 0; blue, 0 }  ,draw opacity=1 ]   (128,24) .. controls (139.71,24.79) and (139.43,24.79) .. (140,36) ;
\draw [color={rgb, 255:red, 0; green, 0; blue, 0 }  ,draw opacity=1 ]   (184,24) .. controls (195.71,23.21) and (195.43,23.21) .. (196,12) ;
\draw [color={rgb, 255:red, 191; green, 97; blue, 106 }  ,draw opacity=1 ]   (184,24) .. controls (172.57,24.35) and (172.29,22.64) .. (172,12) ;
\draw  [color={rgb, 255:red, 0; green, 0; blue, 0 }  ,draw opacity=1 ][fill={rgb, 255:red, 255; green, 255; blue, 255 }  ,fill opacity=1 ] (182,24) .. controls (182,25.1) and (182.9,26) .. (184,26) .. controls (185.11,26) and (186,25.1) .. (186,24) .. controls (186,22.9) and (185.11,22) .. (184,22) .. controls (182.9,22) and (182,22.9) .. (182,24) -- cycle ;
\draw  [draw opacity=0] (200,4) -- (224,4) -- (224,44) -- (200,44) -- cycle ;
\draw [color={rgb, 255:red, 0; green, 0; blue, 0 }  ,draw opacity=1 ][fill={rgb, 255:red, 0; green, 0; blue, 0 }  ,fill opacity=1 ]   (80,12) -- (80,24) ;
\draw [color={rgb, 255:red, 0; green, 0; blue, 0 }  ,draw opacity=1 ]   (32,24) .. controls (20.56,23.65) and (20.28,25.36) .. (19.99,36) ;
\draw [color={rgb, 255:red, 0; green, 0; blue, 0 }  ,draw opacity=1 ]   (32,12) -- (32,24) ;
\draw  [color={rgb, 255:red, 0; green, 0; blue, 0 }  ,draw opacity=1 ][fill={rgb, 255:red, 0; green, 0; blue, 0 }  ,fill opacity=1 ] (30,24) .. controls (30,22.9) and (30.89,22) .. (32,22) .. controls (33.1,22) and (34,22.9) .. (34,24) .. controls (34,25.1) and (33.1,26) .. (32,26) .. controls (30.89,26) and (30,25.1) .. (30,24) -- cycle ;
\draw  [draw opacity=0] (48,4) -- (72,4) -- (72,44) -- (48,44) -- cycle ;
\draw [color={rgb, 255:red, 0; green, 0; blue, 0 }  ,draw opacity=1 ]   (32,24) .. controls (43.71,24.79) and (43.42,24.79) .. (44,36) ;
\draw  [color={rgb, 255:red, 0; green, 0; blue, 0 }  ,draw opacity=1 ][fill={rgb, 255:red, 0; green, 0; blue, 0 }  ,fill opacity=1 ] (78,24) .. controls (78,25.1) and (78.9,26) .. (80,26) .. controls (81.11,26) and (82,25.1) .. (82,24) .. controls (82,22.9) and (81.11,22) .. (80,22) .. controls (78.9,22) and (78,22.9) .. (78,24) -- cycle ;
\draw  [draw opacity=0] (96,4) -- (120,4) -- (120,44) -- (96,44) -- cycle ;

\draw (156,24) node  [font=\footnotesize]  {$;$};
\draw (212,24) node  [font=\footnotesize]  {$;$};
\draw (60,24) node  [font=\footnotesize]  {$;$};
\draw (100,24) node  [font=\footnotesize]  {$;$};

\end{tikzpicture}
   \caption{Generators for global state.}
  \label{fig:generators-global-state}
\end{figure}

These two $\Put$ and $\Get$ generators, without extra axioms, represent what
happens when a process can \emph{send} or \emph{receive} resources from an
unrestricted environment. Right now, we are concerned with the theory of a
global state accessed by a single process: we will impose the following axioms,
which assert that the global state was not modified by anyone but this single
process (\Cref{fig:axioms-global-state}).

\begin{figure}[!ht]

\tikzset{every picture/.style={line width=0.75pt}} %

\begin{tikzpicture}[x=0.75pt,y=0.75pt,yscale=-1,xscale=1]
\draw [color={rgb, 255:red, 191; green, 97; blue, 106 }  ,draw opacity=1 ]   (324,80) -- (324,92) ;
\draw [color={rgb, 255:red, 191; green, 97; blue, 106 }  ,draw opacity=1 ]   (248,92) -- (248,104) ;
\draw [color={rgb, 255:red, 191; green, 97; blue, 106 }  ,draw opacity=1 ]   (248,92) .. controls (236.57,92.35) and (236.28,90.64) .. (236,80) ;
\draw [color={rgb, 255:red, 0; green, 0; blue, 0 }  ,draw opacity=1 ]   (248,92) .. controls (259.99,92.23) and (259.99,91.63) .. (259.99,84) ;
\draw [color={rgb, 255:red, 191; green, 97; blue, 106 }  ,draw opacity=1 ]   (100.01,96) -- (100.01,104) ;
\draw [color={rgb, 255:red, 191; green, 97; blue, 106 }  ,draw opacity=1 ]   (100,80) .. controls (88.57,79.65) and (88.4,80.23) .. (88,88) ;
\draw [color={rgb, 255:red, 191; green, 97; blue, 106 }  ,draw opacity=1 ]   (100,72) -- (100,80) ;
\draw    (388,24) -- (388,36) ;
\draw [color={rgb, 255:red, 0; green, 0; blue, 0 }  ,draw opacity=1 ]   (300.01,32) .. controls (311.72,31.21) and (311.43,31.21) .. (312,20) ;
\draw [color={rgb, 255:red, 0; green, 0; blue, 0 }  ,draw opacity=1 ]   (312,20) .. controls (300.57,20.35) and (300.29,18.64) .. (300,8) ;
\draw [color={rgb, 255:red, 0; green, 0; blue, 0 }  ,draw opacity=1 ]   (312,20) .. controls (323.71,19.21) and (323.43,19.21) .. (324,8) ;
\draw [color={rgb, 255:red, 0; green, 0; blue, 0 }  ,draw opacity=1 ]   (252.01,32) .. controls (263.72,31.21) and (263.44,31.21) .. (264.01,20) ;
\draw [color={rgb, 255:red, 0; green, 0; blue, 0 }  ,draw opacity=1 ]   (240,20) .. controls (251.71,19.21) and (251.43,19.21) .. (252,8) ;
\draw [color={rgb, 255:red, 191; green, 97; blue, 106 }  ,draw opacity=1 ]   (82,16) .. controls (70.57,15.65) and (68.29,17.36) .. (68,28) ;
\draw [color={rgb, 255:red, 191; green, 97; blue, 106 }  ,draw opacity=1 ]   (80,8) -- (80,16) ;
\draw [color={rgb, 255:red, 191; green, 97; blue, 106 }  ,draw opacity=1 ]   (20,28) .. controls (8.56,27.65) and (8.28,29.36) .. (7.99,40) ;
\draw [color={rgb, 255:red, 191; green, 97; blue, 106 }  ,draw opacity=1 ]   (32,16) .. controls (20.57,15.65) and (20.28,17.36) .. (20,28) ;
\draw [color={rgb, 255:red, 191; green, 97; blue, 106 }  ,draw opacity=1 ]   (32,8) -- (32,16) ;
\draw [color={rgb, 255:red, 0; green, 0; blue, 0 }  ,draw opacity=1 ]   (272.09,76) .. controls (284.31,75.71) and (283.64,78.38) .. (284.09,88) ;
\draw  [color={rgb, 255:red, 0; green, 0; blue, 0 }  ,draw opacity=1 ][fill={rgb, 255:red, 0; green, 0; blue, 0 }  ,fill opacity=1 ] (18,28) .. controls (18,26.9) and (18.89,26) .. (20,26) .. controls (21.1,26) and (22,26.9) .. (22,28) .. controls (22,29.1) and (21.1,30) .. (20,30) .. controls (18.89,30) and (18,29.1) .. (18,28) -- cycle ;
\draw    (44,28) -- (44,40) ;
\draw  [color={rgb, 255:red, 0; green, 0; blue, 0 }  ,draw opacity=1 ][fill={rgb, 255:red, 0; green, 0; blue, 0 }  ,fill opacity=1 ] (30,16) .. controls (30,14.9) and (30.9,14) .. (32,14) .. controls (33.1,14) and (34,14.9) .. (34,16) .. controls (34,17.1) and (33.1,18) .. (32,18) .. controls (30.9,18) and (30,17.1) .. (30,16) -- cycle ;
\draw  [draw opacity=0] (44,4) -- (68,4) -- (68,44) -- (44,44) -- cycle ;
\draw  [color={rgb, 255:red, 0; green, 0; blue, 0 }  ,draw opacity=1 ][fill={rgb, 255:red, 0; green, 0; blue, 0 }  ,fill opacity=1 ] (78,16) .. controls (78,14.9) and (78.9,14) .. (80,14) .. controls (81.1,14) and (82,14.9) .. (82,16) .. controls (82,17.1) and (81.1,18) .. (80,18) .. controls (78.9,18) and (78,17.1) .. (78,16) -- cycle ;
\draw  [color={rgb, 255:red, 0; green, 0; blue, 0 }  ,draw opacity=1 ][fill={rgb, 255:red, 0; green, 0; blue, 0 }  ,fill opacity=1 ] (90,28) .. controls (90,26.9) and (90.89,26) .. (92,26) .. controls (93.1,26) and (94,26.9) .. (94,28) .. controls (94,29.1) and (93.1,30) .. (92,30) .. controls (90.89,30) and (90,29.1) .. (90,28) -- cycle ;
\draw [color={rgb, 255:red, 0; green, 0; blue, 0 }  ,draw opacity=1 ]   (92,28) .. controls (103.71,28.79) and (103.42,28.79) .. (103.99,40) ;
\draw [color={rgb, 255:red, 0; green, 0; blue, 0 }  ,draw opacity=1 ]   (92,28) .. controls (80.56,27.65) and (80.28,29.36) .. (79.99,40) ;
\draw  [color={rgb, 255:red, 0; green, 0; blue, 0 }  ,draw opacity=1 ][fill={rgb, 255:red, 0; green, 0; blue, 0 }  ,fill opacity=1 ] (154,32) .. controls (154,30.9) and (154.89,30) .. (156,30) .. controls (157.1,30) and (158,30.9) .. (158,32) .. controls (158,33.1) and (157.1,34) .. (156,34) .. controls (154.89,34) and (154,33.1) .. (154,32) -- cycle ;
\draw  [color={rgb, 255:red, 0; green, 0; blue, 0 }  ,draw opacity=1 ][fill={rgb, 255:red, 0; green, 0; blue, 0 }  ,fill opacity=1 ] (142,20) .. controls (142,18.9) and (142.9,18) .. (144,18) .. controls (145.1,18) and (146,18.9) .. (146,20) .. controls (146,21.1) and (145.1,22) .. (144,22) .. controls (142.9,22) and (142,21.1) .. (142,20) -- cycle ;
\draw  [draw opacity=0] (160,4) -- (184,4) -- (184,44) -- (160,44) -- cycle ;
\draw  [color={rgb, 255:red, 0; green, 0; blue, 0 }  ,draw opacity=1 ][fill={rgb, 255:red, 0; green, 0; blue, 0 }  ,fill opacity=1 ] (426,20) .. controls (426,18.9) and (426.9,18) .. (428,18) .. controls (429.11,18) and (430,18.9) .. (430,20) .. controls (430,21.1) and (429.11,22) .. (428,22) .. controls (426.9,22) and (426,21.1) .. (426,20) -- cycle ;
\draw    (428,12) -- (428,20) ;
\draw    (443.99,12) -- (444,16) ;
\draw  [color={rgb, 255:red, 0; green, 0; blue, 0 }  ,draw opacity=1 ][fill={rgb, 255:red, 0; green, 0; blue, 0 }  ,fill opacity=1 ] (98,80) .. controls (98,78.9) and (98.9,78) .. (100,78) .. controls (101.1,78) and (102,78.9) .. (102,80) .. controls (102,81.1) and (101.1,82) .. (100,82) .. controls (98.9,82) and (98,81.1) .. (98,80) -- cycle ;
\draw  [draw opacity=0] (112,68) -- (136,68) -- (136,108) -- (112,108) -- cycle ;
\draw    (272.09,68) -- (272.09,76) ;
\draw  [draw opacity=0] (286,69) -- (310,69) -- (310,109) -- (286,109) -- cycle ;
\draw [color={rgb, 255:red, 0; green, 0; blue, 0 }  ,draw opacity=1 ]   (272.09,76) .. controls (260.65,75.65) and (259.5,77.81) .. (260,84) ;
\draw  [color={rgb, 255:red, 0; green, 0; blue, 0 }  ,draw opacity=1 ][fill={rgb, 255:red, 255; green, 255; blue, 255 }  ,fill opacity=1 ] (246,92) .. controls (246,90.9) and (246.9,90) .. (248,90) .. controls (249.1,90) and (250,90.9) .. (250,92) .. controls (250,93.1) and (249.1,94) .. (248,94) .. controls (246.9,94) and (246,93.1) .. (246,92) -- cycle ;
\draw  [color={rgb, 255:red, 0; green, 0; blue, 0 }  ,draw opacity=1 ][fill={rgb, 255:red, 0; green, 0; blue, 0 }  ,fill opacity=1 ] (270.09,76) .. controls (270.09,74.9) and (270.98,74) .. (272.09,74) .. controls (273.19,74) and (274.09,74.9) .. (274.09,76) .. controls (274.09,77.1) and (273.19,78) .. (272.09,78) .. controls (270.98,78) and (270.09,77.1) .. (270.09,76) -- cycle ;
\draw    (284,88) -- (284,104) ;
\draw  [draw opacity=0] (104,4) -- (128,4) -- (128,44) -- (104,44) -- cycle ;
\draw  [draw opacity=0] (196,4) -- (220,4) -- (220,44) -- (196,44) -- cycle ;
\draw  [draw opacity=0] (344,68) -- (368,68) -- (368,108) -- (344,108) -- cycle ;
\draw  [draw opacity=0] (144,68) -- (168,68) -- (168,108) -- (144,108) -- cycle ;
\draw [color={rgb, 255:red, 0; green, 0; blue, 0 }  ,draw opacity=1 ]   (32,16) .. controls (43.71,16.79) and (43.43,16.79) .. (44,28) ;
\draw [color={rgb, 255:red, 0; green, 0; blue, 0 }  ,draw opacity=1 ]   (20,28) .. controls (31.71,28.79) and (31.42,28.79) .. (31.99,40) ;
\draw [color={rgb, 255:red, 0; green, 0; blue, 0 }  ,draw opacity=1 ]   (80,16) .. controls (91.71,16.79) and (91.42,16.79) .. (92,28) ;
\draw [color={rgb, 255:red, 191; green, 97; blue, 106 }  ,draw opacity=1 ]   (68,28) -- (68,40) ;
\draw [color={rgb, 255:red, 191; green, 97; blue, 106 }  ,draw opacity=1 ]   (146,20) .. controls (134.57,19.65) and (132.29,21.36) .. (132,32) ;
\draw [color={rgb, 255:red, 191; green, 97; blue, 106 }  ,draw opacity=1 ]   (144,8) -- (144,20) ;
\draw  [color={rgb, 255:red, 0; green, 0; blue, 0 }  ,draw opacity=1 ][fill={rgb, 255:red, 0; green, 0; blue, 0 }  ,fill opacity=1 ] (142,20) .. controls (142,18.9) and (142.9,18) .. (144,18) .. controls (145.1,18) and (146,18.9) .. (146,20) .. controls (146,21.1) and (145.1,22) .. (144,22) .. controls (142.9,22) and (142,21.1) .. (142,20) -- cycle ;
\draw [color={rgb, 255:red, 0; green, 0; blue, 0 }  ,draw opacity=1 ]   (144,20) .. controls (155.71,20.79) and (155.42,20.79) .. (156,32) ;
\draw [color={rgb, 255:red, 191; green, 97; blue, 106 }  ,draw opacity=1 ]   (132,32) -- (132,40) ;
\draw [color={rgb, 255:red, 191; green, 97; blue, 106 }  ,draw opacity=1 ]   (192,8) -- (192,40) ;
\draw [color={rgb, 255:red, 191; green, 97; blue, 106 }  ,draw opacity=1 ]   (302.01,32) .. controls (290.58,32.35) and (288.29,30.64) .. (288.01,20) ;
\draw [color={rgb, 255:red, 191; green, 97; blue, 106 }  ,draw opacity=1 ]   (300.01,40) -- (300.01,32) ;
\draw [color={rgb, 255:red, 191; green, 97; blue, 106 }  ,draw opacity=1 ]   (240,20) .. controls (228.57,20.35) and (228.29,18.64) .. (228,8) ;
\draw [color={rgb, 255:red, 191; green, 97; blue, 106 }  ,draw opacity=1 ]   (252.01,32) .. controls (240.58,32.35) and (240.29,30.64) .. (240,20) ;
\draw [color={rgb, 255:red, 191; green, 97; blue, 106 }  ,draw opacity=1 ]   (252.01,40) -- (252.01,32) ;
\draw  [color={rgb, 255:red, 0; green, 0; blue, 0 }  ,draw opacity=1 ][fill={rgb, 255:red, 255; green, 255; blue, 255 }  ,fill opacity=1 ] (238,20) .. controls (238,21.1) and (238.9,22) .. (240,22) .. controls (241.11,22) and (242,21.1) .. (242,20) .. controls (242,18.9) and (241.11,18) .. (240,18) .. controls (238.9,18) and (238,18.9) .. (238,20) -- cycle ;
\draw    (264.01,20) -- (264.01,8) ;
\draw  [color={rgb, 255:red, 0; green, 0; blue, 0 }  ,draw opacity=1 ][fill={rgb, 255:red, 255; green, 255; blue, 255 }  ,fill opacity=1 ] (250.01,32) .. controls (250.01,33.1) and (250.9,34) .. (252.01,34) .. controls (253.11,34) and (254.01,33.1) .. (254.01,32) .. controls (254.01,30.9) and (253.11,30) .. (252.01,30) .. controls (250.9,30) and (250.01,30.9) .. (250.01,32) -- cycle ;
\draw  [draw opacity=0] (264.01,44) -- (288.01,44) -- (288.01,4) -- (264.01,4) -- cycle ;
\draw  [color={rgb, 255:red, 0; green, 0; blue, 0 }  ,draw opacity=1 ][fill={rgb, 255:red, 255; green, 255; blue, 255 }  ,fill opacity=1 ] (298.01,32) .. controls (298.01,33.1) and (298.9,34) .. (300.01,34) .. controls (301.11,34) and (302.01,33.1) .. (302.01,32) .. controls (302.01,30.9) and (301.11,30) .. (300.01,30) .. controls (298.9,30) and (298.01,30.9) .. (298.01,32) -- cycle ;
\draw  [color={rgb, 255:red, 0; green, 0; blue, 0 }  ,draw opacity=1 ][fill={rgb, 255:red, 255; green, 255; blue, 255 }  ,fill opacity=1 ] (310,20) .. controls (310,21.1) and (310.9,22) .. (312,22) .. controls (313.11,22) and (314,21.1) .. (314,20) .. controls (314,18.9) and (313.11,18) .. (312,18) .. controls (310.9,18) and (310,18.9) .. (310,20) -- cycle ;
\draw [color={rgb, 255:red, 191; green, 97; blue, 106 }  ,draw opacity=1 ]   (288.01,20) -- (288.01,8) ;
\draw [color={rgb, 255:red, 0; green, 0; blue, 0 }  ,draw opacity=1 ]   (388,24) .. controls (376.57,24.35) and (376.29,22.64) .. (376,12) ;
\draw [color={rgb, 255:red, 0; green, 0; blue, 0 }  ,draw opacity=1 ]   (388,24) .. controls (399.71,23.21) and (399.43,23.21) .. (400,12) ;
\draw  [color={rgb, 255:red, 0; green, 0; blue, 0 }  ,draw opacity=1 ][fill={rgb, 255:red, 255; green, 255; blue, 255 }  ,fill opacity=1 ] (386,24) .. controls (386,25.1) and (386.9,26) .. (388,26) .. controls (389.11,26) and (390,25.1) .. (390,24) .. controls (390,22.9) and (389.11,22) .. (388,22) .. controls (386.9,22) and (386,22.9) .. (386,24) -- cycle ;
\draw  [draw opacity=0] (400,4) -- (424,4) -- (424,44) -- (400,44) -- cycle ;
\draw    (444,16) .. controls (444.5,27.56) and (436.25,22.31) .. (436,36) ;
\draw  [draw opacity=0] (448,4) -- (472,4) -- (472,44) -- (448,44) -- cycle ;
\draw [color={rgb, 255:red, 0; green, 0; blue, 0 }  ,draw opacity=1 ]   (100,80) .. controls (111.71,80.79) and (111.8,79.43) .. (112,88) ;
\draw [color={rgb, 255:red, 0; green, 0; blue, 0 }  ,draw opacity=1 ]   (100.01,96) .. controls (112,96.23) and (112,95.63) .. (112,88) ;
\draw [color={rgb, 255:red, 191; green, 97; blue, 106 }  ,draw opacity=1 ]   (88,88) .. controls (88,96.63) and (88.2,95.63) .. (100.01,96) ;
\draw  [color={rgb, 255:red, 0; green, 0; blue, 0 }  ,draw opacity=1 ][fill={rgb, 255:red, 255; green, 255; blue, 255 }  ,fill opacity=1 ] (98.01,96) .. controls (98.01,94.9) and (98.9,94) .. (100.01,94) .. controls (101.11,94) and (102.01,94.9) .. (102.01,96) .. controls (102.01,97.1) and (101.11,98) .. (100.01,98) .. controls (98.9,98) and (98.01,97.1) .. (98.01,96) -- cycle ;
\draw [color={rgb, 255:red, 191; green, 97; blue, 106 }  ,draw opacity=1 ]   (140,72) -- (140,104) ;
\draw [color={rgb, 255:red, 191; green, 97; blue, 106 }  ,draw opacity=1 ]   (236,68) -- (236,80) ;
\draw [color={rgb, 255:red, 0; green, 0; blue, 0 }  ,draw opacity=1 ]   (324,80) .. controls (335.99,80.23) and (335.99,75.63) .. (335.99,68) ;
\draw [color={rgb, 255:red, 191; green, 97; blue, 106 }  ,draw opacity=1 ]   (324,80) .. controls (312.57,80.35) and (312.28,78.64) .. (312,68) ;
\draw  [color={rgb, 255:red, 0; green, 0; blue, 0 }  ,draw opacity=1 ][fill={rgb, 255:red, 255; green, 255; blue, 255 }  ,fill opacity=1 ] (322,80) .. controls (322,78.9) and (322.9,78) .. (324,78) .. controls (325.1,78) and (326,78.9) .. (326,80) .. controls (326,81.1) and (325.1,82) .. (324,82) .. controls (322.9,82) and (322,81.1) .. (322,80) -- cycle ;
\draw [color={rgb, 255:red, 0; green, 0; blue, 0 }  ,draw opacity=1 ]   (324,92) .. controls (336.22,91.71) and (335.56,94.38) .. (336,104) ;
\draw [color={rgb, 255:red, 191; green, 97; blue, 106 }  ,draw opacity=1 ]   (324,92) .. controls (312.57,91.65) and (312,94.31) .. (312,104) ;
\draw  [color={rgb, 255:red, 0; green, 0; blue, 0 }  ,draw opacity=1 ][fill={rgb, 255:red, 0; green, 0; blue, 0 }  ,fill opacity=1 ] (322,92) .. controls (322,90.9) and (322.9,90) .. (324,90) .. controls (325.1,90) and (326,90.9) .. (326,92) .. controls (326,93.1) and (325.1,94) .. (324,94) .. controls (322.9,94) and (322,93.1) .. (322,92) -- cycle ;

\draw (56,24) node  [font=\footnotesize]  {$\overset{(i)}{=}$};
\draw (17,45.4) node [anchor=north west][inner sep=0.75pt]  [font=\small]  {$comonoid\ ( runtime\ coaction)$};
\draw (170,23.5) node  [font=\footnotesize]  {$\overset{(ii)}{=}$};
\draw (231,45.4) node [anchor=north west][inner sep=0.75pt]  [font=\small]  {$semimonoid\ ( runtime\ semiaction)$};
\draw (124,88) node  [font=\footnotesize]  {$\overset{(iv)}{=}$};
\draw (101,109.4) node [anchor=north west][inner sep=0.75pt]  [font=\small]  {$special$};
\draw (298,89) node  [font=\footnotesize]  {$\overset{(v)}{=}$};
\draw (265,110.4) node [anchor=north west][inner sep=0.75pt]  [font=\small]  {$Frobenius$};
\draw (116,24) node  [font=\footnotesize]  {$;$};
\draw (208,24) node  [font=\footnotesize]  {$;$};
\draw (356,88) node  [font=\footnotesize]  {$;$};
\draw (156,88) node  [font=\footnotesize]  {$;$};
\draw (276.01,24) node  [font=\footnotesize]  {$\overset{(iii)}{=}$};
\draw (333,18.4) node [anchor=north west][inner sep=0.75pt]  [font=\small]  {$with$};
\draw (412,24) node  [font=\footnotesize]  {${=}$};
\draw (460,24) node  [font=\footnotesize]  {$;$};

\end{tikzpicture}
   \caption{Effectful string diagrams for the axioms of global state.}
  \label{fig:axioms-global-state}
\end{figure}

The equations in \Cref{fig:axioms-global-state} (and
\Cref{fig:axioms-noninterchange-state}) say that: (\emph{i}) reading the global
state twice gets us the same result twice, (\emph{ii}) reading the global state
and discarding the result is the same as doing nothing, (\emph{iii}) writing
twice to the global state keeps only the last thing that was written,
(\emph{iv}) reading something and immediately writing it to the global state is
the same as doing nothing, (\emph{v}) keeping a copy of something that we write
to global state is the same as writing it to the global state and then reading
it.

\begin{rem}[Global state is semi-Frobenius]
  The theory in \Cref{fig:axioms-global-state} can be summarized as a
  \emph{semi-Frobenius action} on the runtime $\R$, where the semimonoid is the
  \emph{left-absorbing semigroup}\footnote{Left-absorbing semigroups are those
  satisfying the equation $x \cdot y = x$ \cite{kilp11:monoids}; we use the name
  \emph{semimonoid} and interpret them more generally in a symmetric monoidal
  category.}. The \emph{get-get} and \emph{get-discard} laws correspond to a
  comonoid action on the runtime; the \emph{put-put} law correspond to a
  semimonoid action of a left-absorbing semigroup; the \emph{get-put} law
  correspond to the special axiom; and the \emph{put-get} law corresponds to the
  Frobenius axiom.

  While this algebra exists also in \Cref{fig:axioms-noninterchange-state}, it
  only becomes apparent when we use the string diagrams of
  \effectfulCategories{}, in \Cref{fig:axioms-global-state}.
\end{rem}

\begin{prop}[Race conditions]
  Concurrently mixing two processes that share a global state, $f$ and $g$, can
  produce four possible results: (i) only the result of the first one is
  preserved, (ii) only the result of the second one is preserved, or
  (iii,iv) the composition of both is preserved, in any order.
\end{prop}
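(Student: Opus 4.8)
The plan is to turn the informal phrase ``concurrently mixing $f$ and $g$'' into the precise notion of an \emph{interleaving} of the atomic state operations of $f$ and $g$, and then to use the global-state equations to classify all such interleavings into exactly four equivalence classes, corresponding to the four advertised outcomes. First I would record that, by the \emph{get-get} and \emph{get-discard} laws of \Cref{fig:axioms-global-state} together with the comonoid axioms, any process that touches the global state is equal to one that first performs a single $\Get$, then a \pure{} map, and then a single $\Put$. So it suffices to treat $f = \Get \bcomp \varphi \bcomp \Put$ and $g = \Get \bcomp \gamma \bcomp \Put$ for \pure{} endomorphisms $\varphi, \gamma$ of $X$; any genuine non-state outputs of $f$ and $g$ are carried passively along separate wires and are always preserved, so they do not enter the classification. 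Justifying this reduction requires sliding \pure{} fragments across $\Get$/$\Put$ boundaries, which is legitimate precisely because the \pure{} morphisms are central (\Cref{lemma:identity-mon-eff}) and hence interchange with everything carried on the runtime wire.

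Using \Cref{cor:string-effectfuls} I would then draw $f$ and $g$ as effectful string diagrams threaded by the runtime wire $\R$, on which the global state is carried as the semi-Frobenius action described in the remark preceding the statement. ``Mixing $f$ and $g$ concurrently'' then means choosing a linear order on the four boxes $\Get_f, \Put_f, \Get_g, \Put_g$ compatible with program order ($\Get_f$ before $\Put_f$, and $\Get_g$ before $\Put_g$); the runtime wire forces each such choice to be a well-formed effectful diagram, and there are $\binom{4}{2}=6$ compatible orders.

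Next I would simplify these six diagrams. In the two orders where one process runs to completion before the other begins, the diagram is (up to the \emph{put-get} law, which handles the intermediate $\Put \bcomp \Get$) equal to $f \bcomp g$ (outcome iii) or to $g \bcomp f$ (outcome iv). In the other four orders both $\Get$s precede both $\Put$s; here the \emph{get-get} law merges the two reads into one read followed by a copy, and the \emph{put-put} (left-absorbing) law deletes the earlier write, with the discarded value absorbed by the comonoid counit, collapsing the diagram to $\Get \bcomp \varphi \bcomp \Put = f$ when $\Put_f$ is last (outcome i) and to $\Get \bcomp \gamma \bcomp \Put = g$ when $\Put_g$ is last (outcome ii). A quick check that for generic $\varphi, \gamma$ these four diagrams are pairwise distinct then completes the proof.

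The main obstacle is the first step: pinning down ``concurrently mixing'' and establishing the normalization so that the six-case enumeration is genuinely exhaustive for arbitrary $f$ and $g$ rather than just for literal read--modify--write processes. Once the runtime-wire presentation is fixed, the six reductions are short diagram manipulations using only the five equations of \Cref{fig:axioms-global-state}, and the contrast with the non-interchanging diagrams of \Cref{fig:axioms-noninterchange-state} is exactly what makes the argument transparent.
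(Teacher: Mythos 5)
Your core computation—merge the two reads with \emph{get-get}, let \emph{put-put} discard the earlier write, and use \emph{put-get} to handle the serialized case—is exactly the diagrammatic content of the paper's \Cref{fig:raceConditions}, so the heart of the argument matches. But you package it quite differently, and two of your extra steps deserve scrutiny. The paper does not classify interleavings at all: it takes $f,g\colon X\to X$ as \emph{abstract effectful generators} adjoined to the theory, with discardability $f \bcomp \blackComonoidUnit{} = g \bcomp \blackComonoidUnit{} = \blackComonoidUnit{}$ imposed as an explicit extra axiom, and then simply exhibits one concrete mixing per outcome and reduces it; the claim ``can produce four possible results'' is existential, so four witnesses suffice. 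Your route instead proves a stronger exhaustiveness statement (six order-compatible interleavings, four equivalence classes), which is a genuine strengthening—but it rests on a normalization lemma you assert rather than prove, namely that every state-touching process equals $\Get \bcomp (\text{\pure{}}) \bcomp \Put$. That lemma is plausible (it is the usual normal form for global state), but the axioms you cite for it are insufficient: \emph{get-get} and \emph{get-discard} alone will not normalize a process containing several $\Put$s or a $\Get$ after a $\Put$; you also need \emph{put-put}, \emph{get-put} and \emph{put-get}, and an induction over diagrams. Moreover this lemma does not apply to the paper's $f,g$, which are not expressible in the get/put theory—so your setup answers a slightly different (if reasonable) reading of the informal statement. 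Finally, note that deleting the overwritten process's computation in the overlapping cases requires that $\varphi$ and $\gamma$ be absorbed by $\blackComonoidUnit{}$; the paper makes this an explicit hypothesis on $f$ and $g$, and you should state the corresponding assumption rather than leave it implicit in the word ``\pure{}''.
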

\begin{proof}
  Formally, we work in the theory of global state adding two processes, $f ፡ X →
  X$ and $g ፡ X → X$, that can moreover be discarded, meaning $f ⨾
  \blackComonoidUnit{} = g ⨾ \blackComonoidUnit{} = \blackComonoidUnit{}$. We
  employ string diagrams for \effectfulCategories{}. The following three
  diagrams in \Cref{fig:raceConditions} correspond to the first three cases of
  race conditions; the last one is analogous to the third.
\end{proof}

\begin{figure}[!ht]

\tikzset{every picture/.style={line width=0.75pt}} %

\begin{tikzpicture}[x=0.75pt,y=0.75pt,yscale=-1,xscale=1]
\draw [color={rgb, 255:red, 191; green, 97; blue, 106 }  ,draw opacity=1 ]   (224.01,148) -- (224.01,156) ;
\draw [color={rgb, 255:red, 191; green, 97; blue, 106 }  ,draw opacity=1 ]   (151.95,144) .. controls (151.95,152.63) and (152.15,151.63) .. (163.96,152) ;
\draw [color={rgb, 255:red, 191; green, 97; blue, 106 }  ,draw opacity=1 ]   (248,72) -- (248,80) ;
\draw [color={rgb, 255:red, 191; green, 97; blue, 106 }  ,draw opacity=1 ]   (236.01,64) .. controls (236.01,72.63) and (236.21,71.63) .. (248.01,72) ;
\draw [color={rgb, 255:red, 191; green, 97; blue, 106 }  ,draw opacity=1 ]   (224,56) .. controls (224,64.63) and (224.2,63.63) .. (236.01,64) ;
\draw [color={rgb, 255:red, 191; green, 97; blue, 106 }  ,draw opacity=1 ]   (28.01,68) -- (28.01,76) ;
\draw [color={rgb, 255:red, 0; green, 0; blue, 0 }  ,draw opacity=1 ]   (28.01,68) .. controls (40.01,68.23) and (40.01,67.63) .. (40.01,60) ;
\draw [color={rgb, 255:red, 191; green, 97; blue, 106 }  ,draw opacity=1 ]   (16.01,60) .. controls (16.01,68.63) and (16.21,67.63) .. (28.01,68) ;
\draw  [color={rgb, 255:red, 0; green, 0; blue, 0 }  ,draw opacity=1 ][fill={rgb, 255:red, 255; green, 255; blue, 255 }  ,fill opacity=1 ] (26.01,68) .. controls (26.01,66.9) and (26.91,66) .. (28.01,66) .. controls (29.12,66) and (30.01,66.9) .. (30.01,68) .. controls (30.01,69.1) and (29.12,70) .. (28.01,70) .. controls (26.91,70) and (26.01,69.1) .. (26.01,68) -- cycle ;
\draw [color={rgb, 255:red, 191; green, 97; blue, 106 }  ,draw opacity=1 ]   (16,28) .. controls (4.57,27.65) and (4.29,29.36) .. (4,40) ;
\draw [color={rgb, 255:red, 191; green, 97; blue, 106 }  ,draw opacity=1 ]   (28.01,16) .. controls (16.58,15.65) and (16.29,17.36) .. (16,28) ;
\draw [color={rgb, 255:red, 191; green, 97; blue, 106 }  ,draw opacity=1 ]   (28.01,8) -- (28.01,16) ;
\draw  [color={rgb, 255:red, 0; green, 0; blue, 0 }  ,draw opacity=1 ][fill={rgb, 255:red, 0; green, 0; blue, 0 }  ,fill opacity=1 ] (14,28) .. controls (14,26.9) and (14.9,26) .. (16,26) .. controls (17.11,26) and (18,26.9) .. (18,28) .. controls (18,29.1) and (17.11,30) .. (16,30) .. controls (14.9,30) and (14,29.1) .. (14,28) -- cycle ;
\draw  [color={rgb, 255:red, 0; green, 0; blue, 0 }  ,draw opacity=1 ][fill={rgb, 255:red, 0; green, 0; blue, 0 }  ,fill opacity=1 ] (26.01,16) .. controls (26.01,14.9) and (26.9,14) .. (28.01,14) .. controls (29.11,14) and (30.01,14.9) .. (30.01,16) .. controls (30.01,17.1) and (29.11,18) .. (28.01,18) .. controls (26.9,18) and (26.01,17.1) .. (26.01,16) -- cycle ;
\draw [color={rgb, 255:red, 0; green, 0; blue, 0 }  ,draw opacity=1 ]   (28.01,16) .. controls (40.23,16.38) and (40.23,14.6) .. (40.01,24) ;
\draw [color={rgb, 255:red, 0; green, 0; blue, 0 }  ,draw opacity=1 ]   (16,28) .. controls (27.71,28.79) and (23.44,28.79) .. (24.01,40) ;
\draw  [draw opacity=0] (176,20) -- (200,20) -- (200,60) -- (176,60) -- cycle ;
\draw   (12.01,40) -- (36.01,40) -- (36.01,52) -- (12.01,52) -- cycle ;
\draw   (28.01,24) -- (52.01,24) -- (52.01,36) -- (28.01,36) -- cycle ;
\draw [color={rgb, 255:red, 191; green, 97; blue, 106 }  ,draw opacity=1 ]   (4.01,40) -- (4.01,52) ;
\draw [color={rgb, 255:red, 0; green, 0; blue, 0 }  ,draw opacity=1 ]   (16.01,60) .. controls (28.01,60.23) and (24.01,59.63) .. (24.01,52) ;
\draw [color={rgb, 255:red, 191; green, 97; blue, 106 }  ,draw opacity=1 ]   (4.01,52) .. controls (4.01,60.63) and (4.21,59.63) .. (16.01,60) ;
\draw  [color={rgb, 255:red, 0; green, 0; blue, 0 }  ,draw opacity=1 ][fill={rgb, 255:red, 255; green, 255; blue, 255 }  ,fill opacity=1 ] (14.01,60) .. controls (14.01,58.9) and (14.91,58) .. (16.01,58) .. controls (17.12,58) and (18.01,58.9) .. (18.01,60) .. controls (18.01,61.1) and (17.12,62) .. (16.01,62) .. controls (14.91,62) and (14.01,61.1) .. (14.01,60) -- cycle ;
\draw    (40.01,36) -- (40.01,60) ;
\draw [color={rgb, 255:red, 191; green, 97; blue, 106 }  ,draw opacity=1 ]   (96.01,68) -- (96.01,76) ;
\draw [color={rgb, 255:red, 0; green, 0; blue, 0 }  ,draw opacity=1 ]   (96.01,68) .. controls (108.01,68.23) and (108.01,67.63) .. (108.01,60) ;
\draw [color={rgb, 255:red, 191; green, 97; blue, 106 }  ,draw opacity=1 ]   (80,60) .. controls (80,68.63) and (84.21,67.63) .. (96.01,68) ;
\draw  [color={rgb, 255:red, 0; green, 0; blue, 0 }  ,draw opacity=1 ][fill={rgb, 255:red, 255; green, 255; blue, 255 }  ,fill opacity=1 ] (94.01,68) .. controls (94.01,66.9) and (94.91,66) .. (96.01,66) .. controls (97.12,66) and (98.01,66.9) .. (98.01,68) .. controls (98.01,69.1) and (97.12,70) .. (96.01,70) .. controls (94.91,70) and (94.01,69.1) .. (94.01,68) -- cycle ;
\draw [color={rgb, 255:red, 191; green, 97; blue, 106 }  ,draw opacity=1 ]   (84,28) .. controls (72.57,27.65) and (72.29,29.36) .. (72,40) ;
\draw [color={rgb, 255:red, 191; green, 97; blue, 106 }  ,draw opacity=1 ]   (96.01,16) .. controls (84.58,15.65) and (84.29,17.36) .. (84,28) ;
\draw [color={rgb, 255:red, 191; green, 97; blue, 106 }  ,draw opacity=1 ]   (96.01,8) -- (96.01,16) ;
\draw  [color={rgb, 255:red, 0; green, 0; blue, 0 }  ,draw opacity=1 ][fill={rgb, 255:red, 0; green, 0; blue, 0 }  ,fill opacity=1 ] (82,28) .. controls (82,26.9) and (82.9,26) .. (84,26) .. controls (85.11,26) and (86,26.9) .. (86,28) .. controls (86,29.1) and (85.11,30) .. (84,30) .. controls (82.9,30) and (82,29.1) .. (82,28) -- cycle ;
\draw  [color={rgb, 255:red, 0; green, 0; blue, 0 }  ,draw opacity=1 ][fill={rgb, 255:red, 0; green, 0; blue, 0 }  ,fill opacity=1 ] (94.01,16) .. controls (94.01,14.9) and (94.9,14) .. (96.01,14) .. controls (97.11,14) and (98.01,14.9) .. (98.01,16) .. controls (98.01,17.1) and (97.11,18) .. (96.01,18) .. controls (94.9,18) and (94.01,17.1) .. (94.01,16) -- cycle ;
\draw [color={rgb, 255:red, 0; green, 0; blue, 0 }  ,draw opacity=1 ]   (96.01,16) .. controls (108.23,16.38) and (108.23,14.6) .. (108.01,24) ;
\draw [color={rgb, 255:red, 0; green, 0; blue, 0 }  ,draw opacity=1 ]   (84,28) .. controls (95.71,28.79) and (91.44,28.79) .. (92.01,40) ;
\draw   (80.01,40) -- (104.01,40) -- (104.01,52) -- (80.01,52) -- cycle ;
\draw   (96.01,24) -- (120.01,24) -- (120.01,36) -- (96.01,36) -- cycle ;
\draw    (108.01,36) -- (108.01,60) ;
\draw [color={rgb, 255:red, 0; green, 0; blue, 0 }  ,draw opacity=1 ]   (92,52) -- (92,60) ;
\draw  [color={rgb, 255:red, 0; green, 0; blue, 0 }  ,draw opacity=1 ][fill={rgb, 255:red, 0; green, 0; blue, 0 }  ,fill opacity=1 ] (90,60) .. controls (90,58.9) and (90.9,58) .. (92,58) .. controls (93.1,58) and (94,58.9) .. (94,60) .. controls (94,61.1) and (93.1,62) .. (92,62) .. controls (90.9,62) and (90,61.1) .. (90,60) -- cycle ;
\draw [color={rgb, 255:red, 191; green, 97; blue, 106 }  ,draw opacity=1 ]   (72,40) .. controls (72.5,51.85) and (80,52.35) .. (80,60) ;
\draw [color={rgb, 255:red, 191; green, 97; blue, 106 }  ,draw opacity=1 ]   (152.01,56) -- (152.01,64) ;
\draw [color={rgb, 255:red, 0; green, 0; blue, 0 }  ,draw opacity=1 ]   (152.01,56) .. controls (164,56.23) and (164,55.63) .. (164,48) ;
\draw [color={rgb, 255:red, 191; green, 97; blue, 106 }  ,draw opacity=1 ]   (140,48) .. controls (140,56.63) and (140.2,55.63) .. (152.01,56) ;
\draw  [color={rgb, 255:red, 0; green, 0; blue, 0 }  ,draw opacity=1 ][fill={rgb, 255:red, 255; green, 255; blue, 255 }  ,fill opacity=1 ] (150.01,56) .. controls (150.01,54.9) and (150.9,54) .. (152.01,54) .. controls (153.11,54) and (154.01,54.9) .. (154.01,56) .. controls (154.01,57.1) and (153.11,58) .. (152.01,58) .. controls (150.9,58) and (150.01,57.1) .. (150.01,56) -- cycle ;
\draw [color={rgb, 255:red, 191; green, 97; blue, 106 }  ,draw opacity=1 ]   (152,28) .. controls (140.57,27.65) and (140.29,27.93) .. (140,36) ;
\draw [color={rgb, 255:red, 191; green, 97; blue, 106 }  ,draw opacity=1 ]   (152,20) -- (152,28) ;
\draw  [color={rgb, 255:red, 0; green, 0; blue, 0 }  ,draw opacity=1 ][fill={rgb, 255:red, 0; green, 0; blue, 0 }  ,fill opacity=1 ] (150,28) .. controls (150,26.9) and (150.9,26) .. (152,26) .. controls (153.1,26) and (154,26.9) .. (154,28) .. controls (154,29.1) and (153.1,30) .. (152,30) .. controls (150.9,30) and (150,29.1) .. (150,28) -- cycle ;
\draw [color={rgb, 255:red, 0; green, 0; blue, 0 }  ,draw opacity=1 ]   (152,28) .. controls (164.22,28.38) and (164.22,26.6) .. (164,36) ;
\draw   (152,36) -- (176,36) -- (176,48) -- (152,48) -- cycle ;
\draw [color={rgb, 255:red, 191; green, 97; blue, 106 }  ,draw opacity=1 ]   (140,36) -- (140,48) ;
\draw [color={rgb, 255:red, 0; green, 0; blue, 0 }  ,draw opacity=1 ]   (236.01,64) .. controls (248,64.23) and (248,64.7) .. (248,60) ;
\draw  [color={rgb, 255:red, 0; green, 0; blue, 0 }  ,draw opacity=1 ][fill={rgb, 255:red, 255; green, 255; blue, 255 }  ,fill opacity=1 ] (234.01,64) .. controls (234.01,62.9) and (234.9,62) .. (236.01,62) .. controls (237.11,62) and (238.01,62.9) .. (238.01,64) .. controls (238.01,65.1) and (237.11,66) .. (236.01,66) .. controls (234.9,66) and (234.01,65.1) .. (234.01,64) -- cycle ;
\draw [color={rgb, 255:red, 191; green, 97; blue, 106 }  ,draw opacity=1 ]   (232,28) .. controls (220.56,27.65) and (220.28,29.36) .. (219.99,40) ;
\draw [color={rgb, 255:red, 191; green, 97; blue, 106 }  ,draw opacity=1 ]   (244,16) .. controls (232.57,15.65) and (232.28,17.36) .. (232,28) ;
\draw [color={rgb, 255:red, 191; green, 97; blue, 106 }  ,draw opacity=1 ]   (244,8) -- (244,16) ;
\draw  [color={rgb, 255:red, 0; green, 0; blue, 0 }  ,draw opacity=1 ][fill={rgb, 255:red, 0; green, 0; blue, 0 }  ,fill opacity=1 ] (230,28) .. controls (230,26.9) and (230.89,26) .. (232,26) .. controls (233.1,26) and (234,26.9) .. (234,28) .. controls (234,29.1) and (233.1,30) .. (232,30) .. controls (230.89,30) and (230,29.1) .. (230,28) -- cycle ;
\draw  [color={rgb, 255:red, 0; green, 0; blue, 0 }  ,draw opacity=1 ][fill={rgb, 255:red, 0; green, 0; blue, 0 }  ,fill opacity=1 ] (242,16) .. controls (242,14.9) and (242.9,14) .. (244,14) .. controls (245.1,14) and (246,14.9) .. (246,16) .. controls (246,17.1) and (245.1,18) .. (244,18) .. controls (242.9,18) and (242,17.1) .. (242,16) -- cycle ;
\draw [color={rgb, 255:red, 0; green, 0; blue, 0 }  ,draw opacity=1 ]   (244,16) .. controls (256.22,16.38) and (256.22,14.6) .. (256,24) ;
\draw [color={rgb, 255:red, 0; green, 0; blue, 0 }  ,draw opacity=1 ]   (232,28) .. controls (243.71,28.79) and (239.43,28.79) .. (240,40) ;
\draw   (228,40) -- (252,40) -- (252,52) -- (228,52) -- cycle ;
\draw   (244,24) -- (268,24) -- (268,36) -- (244,36) -- cycle ;
\draw    (256,36) -- (256,44) ;
\draw [color={rgb, 255:red, 191; green, 97; blue, 106 }  ,draw opacity=1 ]   (364.01,56) -- (364.01,64) ;
\draw [color={rgb, 255:red, 0; green, 0; blue, 0 }  ,draw opacity=1 ]   (364.01,56) .. controls (376,56.23) and (376,55.63) .. (376,48) ;
\draw [color={rgb, 255:red, 191; green, 97; blue, 106 }  ,draw opacity=1 ]   (352,48) .. controls (352,56.63) and (352.2,55.63) .. (364.01,56) ;
\draw  [color={rgb, 255:red, 0; green, 0; blue, 0 }  ,draw opacity=1 ][fill={rgb, 255:red, 255; green, 255; blue, 255 }  ,fill opacity=1 ] (362.01,56) .. controls (362.01,54.9) and (362.9,54) .. (364.01,54) .. controls (365.11,54) and (366.01,54.9) .. (366.01,56) .. controls (366.01,57.1) and (365.11,58) .. (364.01,58) .. controls (362.9,58) and (362.01,57.1) .. (362.01,56) -- cycle ;
\draw [color={rgb, 255:red, 191; green, 97; blue, 106 }  ,draw opacity=1 ]   (364,28) .. controls (352.57,27.65) and (352.29,27.93) .. (352,36) ;
\draw [color={rgb, 255:red, 191; green, 97; blue, 106 }  ,draw opacity=1 ]   (364,20) -- (364,28) ;
\draw  [color={rgb, 255:red, 0; green, 0; blue, 0 }  ,draw opacity=1 ][fill={rgb, 255:red, 0; green, 0; blue, 0 }  ,fill opacity=1 ] (362,28) .. controls (362,26.9) and (362.9,26) .. (364,26) .. controls (365.1,26) and (366,26.9) .. (366,28) .. controls (366,29.1) and (365.1,30) .. (364,30) .. controls (362.9,30) and (362,29.1) .. (362,28) -- cycle ;
\draw [color={rgb, 255:red, 0; green, 0; blue, 0 }  ,draw opacity=1 ]   (364,28) .. controls (376.22,28.38) and (376.22,26.6) .. (376,36) ;
\draw   (364,36) -- (388,36) -- (388,48) -- (364,48) -- cycle ;
\draw [color={rgb, 255:red, 191; green, 97; blue, 106 }  ,draw opacity=1 ]   (352,36) -- (352,48) ;
\draw [color={rgb, 255:red, 191; green, 97; blue, 106 }  ,draw opacity=1 ]   (220,40) .. controls (220.5,51.85) and (224,48.35) .. (224,56) ;
\draw [color={rgb, 255:red, 0; green, 0; blue, 0 }  ,draw opacity=1 ]   (256,44) .. controls (256.5,55.85) and (248.2,53.1) .. (248,60) ;
\draw [color={rgb, 255:red, 0; green, 0; blue, 0 }  ,draw opacity=1 ]   (240,52) .. controls (239.8,63.7) and (260.01,56.35) .. (260.01,64) ;
\draw [color={rgb, 255:red, 0; green, 0; blue, 0 }  ,draw opacity=1 ]   (248.01,72) .. controls (260.01,72.23) and (260.01,71.63) .. (260.01,64) ;
\draw  [color={rgb, 255:red, 0; green, 0; blue, 0 }  ,draw opacity=1 ][fill={rgb, 255:red, 255; green, 255; blue, 255 }  ,fill opacity=1 ] (246.01,72) .. controls (246.01,70.9) and (246.91,70) .. (248.01,70) .. controls (249.12,70) and (250.01,70.9) .. (250.01,72) .. controls (250.01,73.1) and (249.12,74) .. (248.01,74) .. controls (246.91,74) and (246.01,73.1) .. (246.01,72) -- cycle ;
\draw [color={rgb, 255:red, 191; green, 97; blue, 106 }  ,draw opacity=1 ]   (296.01,64) -- (296.01,72) ;
\draw [color={rgb, 255:red, 191; green, 97; blue, 106 }  ,draw opacity=1 ]   (284,56) .. controls (284,64.63) and (284.2,63.63) .. (296.01,64) ;
\draw [color={rgb, 255:red, 191; green, 97; blue, 106 }  ,draw opacity=1 ]   (300,32) .. controls (288.56,31.65) and (288.28,33.36) .. (287.99,44) ;
\draw [color={rgb, 255:red, 191; green, 97; blue, 106 }  ,draw opacity=1 ]   (312,20) .. controls (300.57,19.65) and (300.28,21.36) .. (300,32) ;
\draw [color={rgb, 255:red, 191; green, 97; blue, 106 }  ,draw opacity=1 ]   (312,12) -- (312,20) ;
\draw  [color={rgb, 255:red, 0; green, 0; blue, 0 }  ,draw opacity=1 ][fill={rgb, 255:red, 0; green, 0; blue, 0 }  ,fill opacity=1 ] (298,32) .. controls (298,30.9) and (298.89,30) .. (300,30) .. controls (301.1,30) and (302,30.9) .. (302,32) .. controls (302,33.1) and (301.1,34) .. (300,34) .. controls (298.89,34) and (298,33.1) .. (298,32) -- cycle ;
\draw  [color={rgb, 255:red, 0; green, 0; blue, 0 }  ,draw opacity=1 ][fill={rgb, 255:red, 0; green, 0; blue, 0 }  ,fill opacity=1 ] (310,20) .. controls (310,18.9) and (310.9,18) .. (312,18) .. controls (313.1,18) and (314,18.9) .. (314,20) .. controls (314,21.1) and (313.1,22) .. (312,22) .. controls (310.9,22) and (310,21.1) .. (310,20) -- cycle ;
\draw [color={rgb, 255:red, 0; green, 0; blue, 0 }  ,draw opacity=1 ]   (312,20) .. controls (324.22,20.38) and (324.22,18.6) .. (324,28) ;
\draw [color={rgb, 255:red, 0; green, 0; blue, 0 }  ,draw opacity=1 ]   (300,32) .. controls (311.71,32.79) and (307.43,32.79) .. (308,44) ;
\draw   (296,44) -- (320,44) -- (320,56) -- (296,56) -- cycle ;
\draw   (312,28) -- (336,28) -- (336,40) -- (312,40) -- cycle ;
\draw    (324,40) -- (324,48) ;
\draw [color={rgb, 255:red, 191; green, 97; blue, 106 }  ,draw opacity=1 ]   (287.99,44) .. controls (288.44,49.49) and (284,48.35) .. (284,56) ;
\draw [color={rgb, 255:red, 0; green, 0; blue, 0 }  ,draw opacity=1 ]   (296.01,64) .. controls (308,64.23) and (308,63.63) .. (308,56) ;
\draw  [color={rgb, 255:red, 0; green, 0; blue, 0 }  ,draw opacity=1 ][fill={rgb, 255:red, 255; green, 255; blue, 255 }  ,fill opacity=1 ] (294.01,64) .. controls (294.01,62.9) and (294.9,62) .. (296.01,62) .. controls (297.11,62) and (298.01,62.9) .. (298.01,64) .. controls (298.01,65.1) and (297.11,66) .. (296.01,66) .. controls (294.9,66) and (294.01,65.1) .. (294.01,64) -- cycle ;
\draw  [color={rgb, 255:red, 0; green, 0; blue, 0 }  ,draw opacity=1 ][fill={rgb, 255:red, 0; green, 0; blue, 0 }  ,fill opacity=1 ] (322,48) .. controls (322,46.9) and (322.9,46) .. (324,46) .. controls (325.1,46) and (326,46.9) .. (326,48) .. controls (326,49.1) and (325.1,50) .. (324,50) .. controls (322.9,50) and (322,49.1) .. (322,48) -- cycle ;
\draw  [draw opacity=0] (116,24) -- (140,24) -- (140,64) -- (116,64) -- cycle ;
\draw  [draw opacity=0] (48,24) -- (72,24) -- (72,64) -- (48,64) -- cycle ;
\draw  [draw opacity=0] (388,20) -- (412,20) -- (412,60) -- (388,60) -- cycle ;
\draw [color={rgb, 255:red, 191; green, 97; blue, 106 }  ,draw opacity=1 ]   (96.01,116) -- (96.01,124) ;
\draw [color={rgb, 255:red, 0; green, 0; blue, 0 }  ,draw opacity=1 ]   (96.01,116) .. controls (108,116.23) and (108,115.63) .. (108,108) ;
\draw [color={rgb, 255:red, 191; green, 97; blue, 106 }  ,draw opacity=1 ]   (84,108) .. controls (84,116.63) and (84.2,115.63) .. (96.01,116) ;
\draw  [color={rgb, 255:red, 0; green, 0; blue, 0 }  ,draw opacity=1 ][fill={rgb, 255:red, 255; green, 255; blue, 255 }  ,fill opacity=1 ] (94.01,116) .. controls (94.01,114.9) and (94.9,114) .. (96.01,114) .. controls (97.11,114) and (98.01,114.9) .. (98.01,116) .. controls (98.01,117.1) and (97.11,118) .. (96.01,118) .. controls (94.9,118) and (94.01,117.1) .. (94.01,116) -- cycle ;
\draw [color={rgb, 255:red, 191; green, 97; blue, 106 }  ,draw opacity=1 ]   (96,88) .. controls (84.57,87.65) and (84.29,87.93) .. (84,96) ;
\draw [color={rgb, 255:red, 191; green, 97; blue, 106 }  ,draw opacity=1 ]   (96,80) -- (96,88) ;
\draw  [color={rgb, 255:red, 0; green, 0; blue, 0 }  ,draw opacity=1 ][fill={rgb, 255:red, 0; green, 0; blue, 0 }  ,fill opacity=1 ] (94,88) .. controls (94,86.9) and (94.9,86) .. (96,86) .. controls (97.1,86) and (98,86.9) .. (98,88) .. controls (98,89.1) and (97.1,90) .. (96,90) .. controls (94.9,90) and (94,89.1) .. (94,88) -- cycle ;
\draw [color={rgb, 255:red, 0; green, 0; blue, 0 }  ,draw opacity=1 ]   (96,88) .. controls (108.22,88.38) and (108.22,86.6) .. (108,96) ;
\draw   (96,96) -- (120,96) -- (120,108) -- (96,108) -- cycle ;
\draw [color={rgb, 255:red, 191; green, 97; blue, 106 }  ,draw opacity=1 ]   (84,96) -- (84,108) ;
\draw [color={rgb, 255:red, 191; green, 97; blue, 106 }  ,draw opacity=1 ]   (96.01,152) -- (96.01,160) ;
\draw [color={rgb, 255:red, 0; green, 0; blue, 0 }  ,draw opacity=1 ]   (96.01,152) .. controls (108,152.23) and (108,151.63) .. (108,144) ;
\draw [color={rgb, 255:red, 191; green, 97; blue, 106 }  ,draw opacity=1 ]   (84,144) .. controls (84,152.63) and (84.2,151.63) .. (96.01,152) ;
\draw  [color={rgb, 255:red, 0; green, 0; blue, 0 }  ,draw opacity=1 ][fill={rgb, 255:red, 255; green, 255; blue, 255 }  ,fill opacity=1 ] (94.01,152) .. controls (94.01,150.9) and (94.9,150) .. (96.01,150) .. controls (97.11,150) and (98.01,150.9) .. (98.01,152) .. controls (98.01,153.1) and (97.11,154) .. (96.01,154) .. controls (94.9,154) and (94.01,153.1) .. (94.01,152) -- cycle ;
\draw [color={rgb, 255:red, 191; green, 97; blue, 106 }  ,draw opacity=1 ]   (96,124) .. controls (84.57,123.65) and (84.29,123.93) .. (84,132) ;
\draw  [color={rgb, 255:red, 0; green, 0; blue, 0 }  ,draw opacity=1 ][fill={rgb, 255:red, 0; green, 0; blue, 0 }  ,fill opacity=1 ] (94,124) .. controls (94,122.9) and (94.9,122) .. (96,122) .. controls (97.1,122) and (98,122.9) .. (98,124) .. controls (98,125.1) and (97.1,126) .. (96,126) .. controls (94.9,126) and (94,125.1) .. (94,124) -- cycle ;
\draw [color={rgb, 255:red, 0; green, 0; blue, 0 }  ,draw opacity=1 ]   (96,124) .. controls (108.22,124.38) and (108.22,122.6) .. (108,132) ;
\draw   (96,132) -- (120,132) -- (120,144) -- (96,144) -- cycle ;
\draw [color={rgb, 255:red, 191; green, 97; blue, 106 }  ,draw opacity=1 ]   (84,132) -- (84,144) ;
\draw [color={rgb, 255:red, 191; green, 97; blue, 106 }  ,draw opacity=1 ]   (139.95,128) -- (139.95,136) ;
\draw [color={rgb, 255:red, 191; green, 97; blue, 106 }  ,draw opacity=1 ]   (163.95,88) .. controls (152.52,87.65) and (152.24,87.93) .. (151.95,96) ;
\draw [color={rgb, 255:red, 191; green, 97; blue, 106 }  ,draw opacity=1 ]   (163.95,80) -- (163.95,88) ;
\draw  [color={rgb, 255:red, 0; green, 0; blue, 0 }  ,draw opacity=1 ][fill={rgb, 255:red, 0; green, 0; blue, 0 }  ,fill opacity=1 ] (161.95,88) .. controls (161.95,86.9) and (162.85,86) .. (163.95,86) .. controls (165.05,86) and (165.95,86.9) .. (165.95,88) .. controls (165.95,89.1) and (165.05,90) .. (163.95,90) .. controls (162.85,90) and (161.95,89.1) .. (161.95,88) -- cycle ;
\draw [color={rgb, 255:red, 0; green, 0; blue, 0 }  ,draw opacity=1 ]   (163.95,88) .. controls (176.17,88.38) and (176.17,86.6) .. (175.95,96) ;
\draw   (163.95,96) -- (187.95,96) -- (187.95,108) -- (163.95,108) -- cycle ;
\draw [color={rgb, 255:red, 191; green, 97; blue, 106 }  ,draw opacity=1 ]   (151.95,96) -- (151.95,108) ;
\draw [color={rgb, 255:red, 0; green, 0; blue, 0 }  ,draw opacity=1 ]   (151.96,144) .. controls (163.95,144.23) and (163.95,143.63) .. (163.95,136) ;
\draw [color={rgb, 255:red, 191; green, 97; blue, 106 }  ,draw opacity=1 ]   (139.95,136) .. controls (139.95,144.63) and (140.15,143.63) .. (151.96,144) ;
\draw  [color={rgb, 255:red, 0; green, 0; blue, 0 }  ,draw opacity=1 ][fill={rgb, 255:red, 255; green, 255; blue, 255 }  ,fill opacity=1 ] (149.96,144) .. controls (149.96,142.9) and (150.85,142) .. (151.96,142) .. controls (153.06,142) and (153.96,142.9) .. (153.96,144) .. controls (153.96,145.1) and (153.06,146) .. (151.96,146) .. controls (150.85,146) and (149.96,145.1) .. (149.96,144) -- cycle ;
\draw   (151.95,124) -- (175.95,124) -- (175.95,136) -- (151.95,136) -- cycle ;
\draw [color={rgb, 255:red, 0; green, 0; blue, 0 }  ,draw opacity=1 ]   (175.96,108) -- (175.96,116) ;
\draw [color={rgb, 255:red, 0; green, 0; blue, 0 }  ,draw opacity=1 ]   (175.95,116) .. controls (164.52,115.65) and (164.24,115.93) .. (163.95,124) ;
\draw  [color={rgb, 255:red, 0; green, 0; blue, 0 }  ,draw opacity=1 ][fill={rgb, 255:red, 0; green, 0; blue, 0 }  ,fill opacity=1 ] (173.95,116) .. controls (173.95,114.9) and (174.85,114) .. (175.95,114) .. controls (177.05,114) and (177.95,114.9) .. (177.95,116) .. controls (177.95,117.1) and (177.05,118) .. (175.95,118) .. controls (174.85,118) and (173.95,117.1) .. (173.95,116) -- cycle ;
\draw [color={rgb, 255:red, 0; green, 0; blue, 0 }  ,draw opacity=1 ]   (175.95,116) .. controls (188.17,116.38) and (188.17,114.6) .. (187.95,124) ;
\draw [color={rgb, 255:red, 191; green, 97; blue, 106 }  ,draw opacity=1 ]   (151.95,108) .. controls (152.45,119.85) and (139.78,116.38) .. (139.95,128) ;
\draw [color={rgb, 255:red, 191; green, 97; blue, 106 }  ,draw opacity=1 ]   (163.96,152) -- (163.96,160) ;
\draw [color={rgb, 255:red, 0; green, 0; blue, 0 }  ,draw opacity=1 ]   (163.96,152) .. controls (175.95,152.23) and (175.95,151.63) .. (175.95,144) ;
\draw  [color={rgb, 255:red, 0; green, 0; blue, 0 }  ,draw opacity=1 ][fill={rgb, 255:red, 255; green, 255; blue, 255 }  ,fill opacity=1 ] (161.96,152) .. controls (161.96,150.9) and (162.85,150) .. (163.96,150) .. controls (165.06,150) and (165.96,150.9) .. (165.96,152) .. controls (165.96,153.1) and (165.06,154) .. (163.96,154) .. controls (162.85,154) and (161.96,153.1) .. (161.96,152) -- cycle ;
\draw [color={rgb, 255:red, 0; green, 0; blue, 0 }  ,draw opacity=1 ]   (187.95,124) .. controls (188.45,135.85) and (175.95,136.35) .. (175.95,144) ;
\draw  [draw opacity=0] (188,100) -- (212,100) -- (212,140) -- (188,140) -- cycle ;
\draw [color={rgb, 255:red, 191; green, 97; blue, 106 }  ,draw opacity=1 ]   (236,92) .. controls (224.57,91.65) and (224.29,91.93) .. (224,100) ;
\draw [color={rgb, 255:red, 191; green, 97; blue, 106 }  ,draw opacity=1 ]   (236,84) -- (236,92) ;
\draw  [color={rgb, 255:red, 0; green, 0; blue, 0 }  ,draw opacity=1 ][fill={rgb, 255:red, 0; green, 0; blue, 0 }  ,fill opacity=1 ] (234,92) .. controls (234,90.9) and (234.9,90) .. (236,90) .. controls (237.11,90) and (238,90.9) .. (238,92) .. controls (238,93.1) and (237.11,94) .. (236,94) .. controls (234.9,94) and (234,93.1) .. (234,92) -- cycle ;
\draw [color={rgb, 255:red, 0; green, 0; blue, 0 }  ,draw opacity=1 ]   (236.01,92) .. controls (248.22,92.38) and (248.22,90.6) .. (248,100) ;
\draw   (236,100) -- (260,100) -- (260,112) -- (236,112) -- cycle ;
\draw [color={rgb, 255:red, 191; green, 97; blue, 106 }  ,draw opacity=1 ]   (224,100) -- (224.01,112) ;
\draw [color={rgb, 255:red, 0; green, 0; blue, 0 }  ,draw opacity=1 ]   (224.01,148) .. controls (236,148.23) and (236,147.63) .. (236,140) ;
\draw [color={rgb, 255:red, 191; green, 97; blue, 106 }  ,draw opacity=1 ]   (212,140) .. controls (212,148.63) and (212.2,147.63) .. (224.01,148) ;
\draw  [color={rgb, 255:red, 0; green, 0; blue, 0 }  ,draw opacity=1 ][fill={rgb, 255:red, 255; green, 255; blue, 255 }  ,fill opacity=1 ] (222.01,148) .. controls (222.01,146.9) and (222.9,146) .. (224.01,146) .. controls (225.11,146) and (226.01,146.9) .. (226.01,148) .. controls (226.01,149.1) and (225.11,150) .. (224.01,150) .. controls (222.9,150) and (222.01,149.1) .. (222.01,148) -- cycle ;
\draw   (224,128) -- (248,128) -- (248,140) -- (224,140) -- cycle ;
\draw [color={rgb, 255:red, 0; green, 0; blue, 0 }  ,draw opacity=1 ]   (248.01,112) -- (248.01,120) ;
\draw [color={rgb, 255:red, 0; green, 0; blue, 0 }  ,draw opacity=1 ]   (248,120) .. controls (236.57,119.65) and (236.29,119.93) .. (236,128) ;
\draw  [color={rgb, 255:red, 0; green, 0; blue, 0 }  ,draw opacity=1 ][fill={rgb, 255:red, 0; green, 0; blue, 0 }  ,fill opacity=1 ] (246,120) .. controls (246,118.9) and (246.9,118) .. (248,118) .. controls (249.11,118) and (250,118.9) .. (250,120) .. controls (250,121.1) and (249.11,122) .. (248,122) .. controls (246.9,122) and (246,121.1) .. (246,120) -- cycle ;
\draw [color={rgb, 255:red, 0; green, 0; blue, 0 }  ,draw opacity=1 ]   (248.01,120) .. controls (260.22,120.38) and (260.22,118.6) .. (260,128) ;
\draw [color={rgb, 255:red, 191; green, 97; blue, 106 }  ,draw opacity=1 ]   (224,112) .. controls (224.5,123.85) and (211.83,120.38) .. (212,132) ;
\draw  [draw opacity=0] (188,32) -- (212,32) -- (212,72) -- (188,72) -- cycle ;
\draw  [color={rgb, 255:red, 0; green, 0; blue, 0 }  ,draw opacity=1 ][fill={rgb, 255:red, 0; green, 0; blue, 0 }  ,fill opacity=1 ] (258,128) .. controls (258,126.9) and (258.9,126) .. (260,126) .. controls (261.11,126) and (262,126.9) .. (262,128) .. controls (262,129.1) and (261.11,130) .. (260,130) .. controls (258.9,130) and (258,129.1) .. (258,128) -- cycle ;
\draw [color={rgb, 255:red, 191; green, 97; blue, 106 }  ,draw opacity=1 ]   (212,132) -- (212,140) ;
\draw [color={rgb, 255:red, 191; green, 97; blue, 106 }  ,draw opacity=1 ]   (296.01,140) -- (296.01,148) ;
\draw [color={rgb, 255:red, 0; green, 0; blue, 0 }  ,draw opacity=1 ]   (296.01,140) .. controls (308,140.23) and (308,139.63) .. (308,132) ;
\draw [color={rgb, 255:red, 191; green, 97; blue, 106 }  ,draw opacity=1 ]   (284,132) .. controls (284,140.63) and (284.2,139.63) .. (296.01,140) ;
\draw  [color={rgb, 255:red, 0; green, 0; blue, 0 }  ,draw opacity=1 ][fill={rgb, 255:red, 255; green, 255; blue, 255 }  ,fill opacity=1 ] (294.01,140) .. controls (294.01,138.9) and (294.9,138) .. (296.01,138) .. controls (297.11,138) and (298.01,138.9) .. (298.01,140) .. controls (298.01,141.1) and (297.11,142) .. (296.01,142) .. controls (294.9,142) and (294.01,141.1) .. (294.01,140) -- cycle ;
\draw [color={rgb, 255:red, 191; green, 97; blue, 106 }  ,draw opacity=1 ]   (296,96) .. controls (284.57,95.65) and (284.29,95.93) .. (284,104) ;
\draw [color={rgb, 255:red, 191; green, 97; blue, 106 }  ,draw opacity=1 ]   (296,88) -- (296,96) ;
\draw  [color={rgb, 255:red, 0; green, 0; blue, 0 }  ,draw opacity=1 ][fill={rgb, 255:red, 0; green, 0; blue, 0 }  ,fill opacity=1 ] (294,96) .. controls (294,94.9) and (294.9,94) .. (296,94) .. controls (297.1,94) and (298,94.9) .. (298,96) .. controls (298,97.1) and (297.1,98) .. (296,98) .. controls (294.9,98) and (294,97.1) .. (294,96) -- cycle ;
\draw [color={rgb, 255:red, 0; green, 0; blue, 0 }  ,draw opacity=1 ]   (296,96) .. controls (308.22,96.38) and (308.22,94.6) .. (308,104) ;
\draw   (296,104) -- (320,104) -- (320,116) -- (296,116) -- cycle ;
\draw   (296,120) -- (320,120) -- (320,132) -- (296,132) -- cycle ;
\draw [color={rgb, 255:red, 0; green, 0; blue, 0 }  ,draw opacity=1 ]   (308,116) -- (308,120) ;
\draw [color={rgb, 255:red, 191; green, 97; blue, 106 }  ,draw opacity=1 ]   (284,104) -- (284,132) ;
\draw  [draw opacity=0] (260,100) -- (284,100) -- (284,140) -- (260,140) -- cycle ;
\draw  [draw opacity=0] (320,100) -- (344,100) -- (344,140) -- (320,140) -- cycle ;

\draw (188,40) node  [font=\footnotesize]  {$;$};
\draw (40.01,30) node  [font=\footnotesize]  {$f$};
\draw (24.01,46) node  [font=\footnotesize]  {$g$};
\draw (108.01,30) node  [font=\footnotesize]  {$f$};
\draw (92.01,46) node  [font=\footnotesize]  {$g$};
\draw (164,42) node  [font=\footnotesize]  {$f$};
\draw (256,30) node  [font=\footnotesize]  {$f$};
\draw (240,46) node  [font=\footnotesize]  {$g$};
\draw (376,42) node  [font=\footnotesize]  {$g$};
\draw (324,34) node  [font=\footnotesize]  {$f$};
\draw (308,50) node  [font=\footnotesize]  {$g$};
\draw (128,44) node  [font=\footnotesize]  {$=$};
\draw (60,44) node  [font=\footnotesize]  {$=$};
\draw (400,40) node  [font=\footnotesize]  {$;$};
\draw (108,102) node  [font=\footnotesize]  {$f$};
\draw (108,138) node  [font=\footnotesize]  {$g$};
\draw (175.95,102) node  [font=\footnotesize]  {$f$};
\draw (163.95,130) node  [font=\footnotesize]  {$g$};
\draw (128,120) node  [font=\footnotesize]  {$=$};
\draw (248,106) node  [font=\footnotesize]  {$f$};
\draw (236,134) node  [font=\footnotesize]  {$g$};
\draw (200,120) node  [font=\footnotesize]  {$=$};
\draw (308,110) node  [font=\footnotesize]  {$f$};
\draw (308,126) node  [font=\footnotesize]  {$g$};
\draw (272,120) node  [font=\footnotesize]  {$=$};
\draw (332,120) node  [font=\footnotesize]  {$;$};

\end{tikzpicture}
   \caption{Possible results of a race condition.}%
  \label{fig:raceConditions}
\end{figure}

\section{String Diagrams for Premonoidal Categories}
\label{sec:stringPremonoidal}
We have constructed string diagrams for \effectfulCategories{} via an
adjunction (\Cref{theorem:runtime-as-a-resource}); and as a consequence, we will also have constructed string diagrams
for \premonoidalCategories{}. At the beginning of this text, we left open the
question of what were the correct functors between \premonoidalCategories{}: we
now choose to assemble them into a category where functors preserve the central
morphisms. This is not as satisfactory as working directly with
\effectfulCategories{}, but we do so for completeness; we refer the reader to
the work of Staton and Levy for a detailed discussion of why
\effectfulCategories{} may be a better notion than \premonoidalCategories{}
\cite{staton13}.

\begin{defi}%
  \defining{linkStrictPremon}{}%
  Strict \premonoidalCategories{} and central functors between them form a
  category, $\PremonStr$, endowed with a fully faithful functor $\mathsf{J}
  \colon \PremonStr \to \EffCatStr$ that acts by sending a
  \premonoidalCategory{} $ℂ$ to the \effectfulCategory{} given by the inclusion
  of its center, $\zentre(ℂ) → ℂ$.
\end{defi}

On the syntax side, we will use the fact that every \polygraph{}---every
signature of a \premonoidalCategory{}---can be read as an \effectfulPolygraph{}
with no pure generators.

\begin{prop}%
  \label{lemma:adjointpolygraphcouple}%
  There exists an adjunction between \polygraphs{} and \polygraphCouples{}. The
  left adjoint is given by the functor interpreting every edge as an effectful
  one, $\mathsf{I} \colon \PolyGraph \to \EffPolyGraph$. The right adjoint
  functor forgets the pure edges, $\mathsf{V} \colon \EffPolyGraph \to
  \PolyGraph$.
\end{prop}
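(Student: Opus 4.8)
The plan is to exhibit the adjunction $\mathsf{I} \dashv \mathsf{V}$ by its unit and counit and check the two triangle identities; the content is almost entirely an unwinding of the definition of a morphism of \polygraphCouples{}, so the argument is short. First I would pin down the functors. Given a \polygraph{} $\mathcal{G}$, let $\mathcal{G}_\varnothing$ denote the \polygraph{} with the same objects as $\mathcal{G}$ and empty generator set for every pair of object lists; then $\mathsf{I}\mathcal{G} \bydef (\mathcal{G}_\varnothing, \mathcal{G})$, and on a \polygraph{} morphism $f$ we set $\mathsf{I}f \bydef (f_{\mathrm{obj}}, f)$, which is a legitimate morphism of \polygraphCouples{} because $\mathcal{G}_\varnothing$ has nothing to specify on generators and $f$, $f_{\mathrm{obj}}$ agree on objects tautologically. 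In the other direction $\mathsf{V}(\mathcal{W}, \mathcal{H}) \bydef \mathcal{H}$ and $\mathsf{V}(u,f) \bydef f$. Functoriality of both is immediate from the fact that composition in $\EffPolyGraph$ is componentwise.

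Next I would define the unit and counit. Since $\mathsf{V}\mathsf{I}\mathcal{G} = \mathsf{V}(\mathcal{G}_\varnothing, \mathcal{G}) = \mathcal{G}$, take $\eta_\mathcal{G} \bydef \mathrm{id}_\mathcal{G}$. Since $\mathsf{I}\mathsf{V}(\mathcal{W},\mathcal{H}) = (\mathcal{H}_\varnothing, \mathcal{H})$, take $\varepsilon_{(\mathcal{W},\mathcal{H})} \bydef (\iota, \mathrm{id}_\mathcal{H})$, where $\iota \colon \mathcal{H}_\varnothing \to \mathcal{W}$ is the \polygraph{} morphism that is the identity on objects (note $\mathcal{H}_\varnothing$ and $\mathcal{W}$ carry the same object set, that of $\mathcal{H}$) and is empty on generators. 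The morphism $\iota$ is well-defined, and in fact forced: the requirement that $\varepsilon_{(\mathcal{W},\mathcal{H})}$ be a morphism of \polygraphCouples{} makes $\iota_{\mathrm{obj}} = (\mathrm{id}_\mathcal{H})_{\mathrm{obj}}$, and there is no generator data to choose. Naturality of $\eta$ (it is an identity) and of $\varepsilon$ (componentwise naturality, trivial on each factor) is clear.

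Then I would verify the triangle identities. For a \polygraph{} $\mathcal{G}$, the composite $\varepsilon_{\mathsf{I}\mathcal{G}} \comp \mathsf{I}\eta_\mathcal{G}$ equals $(\iota, \mathrm{id}_\mathcal{G}) \comp (\mathrm{id}, \mathrm{id}_\mathcal{G})$ where here $\iota \colon \mathcal{G}_\varnothing \to \mathcal{G}_\varnothing$ is necessarily $\mathrm{id}_{\mathcal{G}_\varnothing}$, so this composite is the identity on $\mathsf{I}\mathcal{G}$. For $(\mathcal{W},\mathcal{H}) \in \EffPolyGraph$, the composite $\mathsf{V}\varepsilon_{(\mathcal{W},\mathcal{H})} \comp \eta_{\mathsf{V}(\mathcal{W},\mathcal{H})}$ equals $\mathrm{id}_\mathcal{H} \comp \mathrm{id}_\mathcal{H} = \mathrm{id}_\mathcal{H}$. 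This establishes $\mathsf{I} \dashv \mathsf{V}$. Equivalently, and perhaps more transparently, one records that a morphism $\mathsf{I}\mathcal{G} = (\mathcal{G}_\varnothing, \mathcal{G}) \to (\mathcal{W},\mathcal{H})$ is a pair $(u,f)$ with $u \colon \mathcal{G}_\varnothing \to \mathcal{W}$, $f \colon \mathcal{G} \to \mathcal{H}$ and $u_{\mathrm{obj}} = f_{\mathrm{obj}}$; but $u$ has no generator data and its object part is determined by $f$, so such a pair \emph{is} just a \polygraph{} morphism $f \colon \mathcal{G} \to \mathsf{V}(\mathcal{W},\mathcal{H})$, and this bijection is visibly natural in $\mathcal{G}$ and in $(\mathcal{W},\mathcal{H})$.

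There is no substantive obstacle here: the only point deserving explicit attention is that the shared-objects constraint built into the definition of $\EffPolyGraph$ is exactly what forces the pure component of a morphism out of $\mathsf{I}\mathcal{G}$ to be unique, so that $(u,f) \mapsto f$ is a bijection and not merely a surjection; everything else is routine bookkeeping with componentwise composition.
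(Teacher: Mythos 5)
Your proof is correct. The paper states this proposition without supplying a proof, so there is nothing to compare against; your argument — either via the unit/counit with the triangle identities, or more directly via the observation that a morphism $(\mathcal{G}_\varnothing,\mathcal{G})\to(\mathcal{W},\mathcal{H})$ of \polygraphCouples{} carries no data beyond its effectful component $f\colon\mathcal{G}\to\mathcal{H}$, since the shared-objects condition forces the pure component's object map to be $f_{\mathrm{obj}}$ — is the evident one the authors intended, and you correctly identify that constraint as the only point requiring attention.
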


On the semantics side, we may observe that the free \effectfulCategory{} over a
\polygraph{} constructed by the previous runtime construction, $\mathsf{R}
\colon \EffPolyGraph \to \EffCatStr$, is actually a \premonoidalCategory{} that
has a trivial centre consisting only of identities.

\begin{lem}%
  \label{lemma:freepremonoidalfactors}%
  The functor $\mathsf{I} \comp \mathsf{R} ፡ \PolyGraph → \EffCatStr$ factors
  through the inclusion of \premonoidalCategories{} as $\mathsf{I} \comp
  \mathsf{R}^\ast \comp \mathsf{J}$ for a codomain restriction $\mathsf{R}^\ast
  \colon \PolyGraph → \PremonStr$.
\end{lem}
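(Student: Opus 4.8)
The plan is to let $\mathsf{R}^{\ast}$ send a \polygraph{} $\hG$ to the \premonoidalCategory{} $\EFF(\mathbf{0},\hG)$, where $\mathbf{0}$ is the \polygraph{} with $\objs(\mathbf{0}) = \objs(\hG)$ and no generators (so $\mathsf{I}(\hG) = (\mathbf{0},\hG)$), and then to prove that $\EFF(\mathbf{0},\hG)$ has a centre consisting only of identities. By \Cref{theorem:runtime-as-a-resource}, $(\mathsf{I}\comp\mathsf{R})(\hG)$ is the \effectfulCategory{} $\MON(\mathbf{0}) \to \EFF(\mathbf{0},\hG)$; since $\MON$ is freely generated and $\mathbf{0}$ has no generators, $\MON(\mathbf{0})$ is the free monoid $\objs(\hG)^{\ast}$ regarded as a strict monoidal category with only identity morphisms, while by \Cref{lemma:eff-is-premonoidal} the category $\EFF(\mathbf{0},\hG)$ is strict premonoidal with object set again $\objs(\hG)^{\ast}$; the identity-on-objects functor of \Cref{lemma:identity-mon-eff} therefore sends every morphism of $\MON(\mathbf{0})$ to an identity. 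Hence, once one knows $\zentre(\EFF(\mathbf{0},\hG)) = \MON(\mathbf{0})$, the effectful category $(\mathsf{I}\comp\mathsf{R})(\hG)$ is literally of the form $\zentre(\mathbb{C})\to\mathbb{C}$ and so lies in the image of $\mathsf{J}$; the factorisation then exists, and it is unique because $\mathsf{J}$ is fully faithful.

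The crux is therefore showing that a morphism of $\EFF(\mathbf{0},\hG)$ is central only if it is an identity. By \Cref{theorem:runtime-as-a-resource} such a morphism is a morphism $\R\tensor A \to \R\tensor B$ of $\MONRUN(\mathbf{0},\hG)$, which, since the pure \polygraph{} is empty, is freely generated by the runtime-threaded copies $\widetilde{f}\colon \R\tensor A' \to \R\tensor B'$ of the generators $f$ of $\hG$ together with the runtime braidings, modulo the braiding axioms for $\R$ alone. I would argue in two steps. First, a morphism built only from braidings must have the same source and target list of objects---braidings reposition $\R$ but never permute the $A_{i}$ among themselves---and, as the braid clique is a commuting family of isomorphisms, such a morphism reduces to the canonical braiding and hence represents the identity of $\EFF(\mathbf{0},\hG)$. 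Second, I would show that a morphism $m$ using at least one generator node cannot interchange with everything: by the universal property of $\EFF(\mathbf{0},\hG)$ one maps it, via a strict premonoidal functor $\Lambda$, into a ``log-of-effects'' model---say a strictification of the Kleisli category of a state-and-writer monad on $\Set$, in which $\R$ carries a running list together with a counter, every effectful generator appends its name and fresh-timestamped data for its inputs and outputs while advancing the counter, and braidings act trivially---and then, taking $x$ to be an arbitrary generator $\widetilde{g}$ whiskered by identities, one checks that $\Lambda$ sends $(m\tensor\id)\comp(\id\tensor x)$ and $(\id\tensor x)\comp(m\tensor\id)$ to computations whose logs and wire values place $m$'s effects before $x$'s in one composite and after in the other; viewed as functions of the input timestamps these differ because $m$ contributes a non-empty record. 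Thus $m$ violates interchange and cannot be central.

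With trivial centre established, the rest is routine. On objects, set $\mathsf{R}^{\ast}(\hG) = \EFF(\mathbf{0},\hG)$, so $\mathsf{J}(\mathsf{R}^{\ast}(\hG)) = (\zentre(\EFF(\mathbf{0},\hG)) \to \EFF(\mathbf{0},\hG)) = (\MON(\mathbf{0}) \to \EFF(\mathbf{0},\hG)) = (\mathsf{I}\comp\mathsf{R})(\hG)$. On a \polygraph{} morphism $\varphi\colon \hG \to \mathcal{H}$, let $\mathsf{R}^{\ast}(\varphi)$ be the premonoidal functor underlying the strict effectful functor $(\mathsf{I}\comp\mathsf{R})(\varphi)$; it strictly preserves the premonoidal structure because $\mathsf{R}$ produces strict effectful functors, and it is a central functor, hence a morphism of $\PremonStr$, for the trivial reason that the source centre contains only identities, which every functor preserves. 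Functoriality of $\mathsf{R}^{\ast}$ and the identity $\mathsf{R}^{\ast}\comp\mathsf{J} = \mathsf{I}\comp\mathsf{R}$ then follow from functoriality of $\mathsf{I}$, $\mathsf{R}$ and $\mathsf{J}$ together with the full faithfulness of $\mathsf{J}$. I expect the second step of the centre computation to be the main obstacle: it requires a model of the free premonoidal category faithful enough to detect \emph{every} failure of interchange, which calls for care precisely for generators carrying very few input and output wires, where a log that records only generator names need not separate the two composites and one must also record wire data.
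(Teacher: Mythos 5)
Your overall strategy is the one the paper intends --- the paper gives no proof of this lemma beyond the preceding observation that $\mathsf{R}(\mathsf{I}(\mathcal{G}))$ should be a premonoidal category with trivial centre --- namely: take $\mathsf{R}^{\ast}(\mathcal{G}) = \EFF(\mathbf{0},\mathcal{G})$, prove $\zentre(\EFF(\mathbf{0},\mathcal{G})) = \MON(\mathbf{0})$, and deduce existence, uniqueness and functoriality of the factorisation from the full faithfulness of $\mathsf{J}$. Your first step (braiding-only morphisms are identities) and your closing paragraph are fine. The gap is in the second step, and it is more serious than the caveat you attach to it.

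Your separating model is the Kleisli category of a state-and-writer strong monad on $\Set$, and in such a Kleisli category a morphism $e \colon I \to I$ satisfies $e \tensor \id_X = \id_X \tensor e$: both whiskerings simply run the effect and pass $X$ through unchanged. But in $\EFF(\mathbf{0},\mathcal{G})$ the two whiskerings of a generator $g \colon I \to I$ are \emph{distinct} morphisms $X \to X$ --- their leftmost representatives in $\MONRUN(\mathbf{0},\mathcal{G})$ are $\tilde{g} \tensor \id_X$ and $\sigma \comp (\id_X \tensor \tilde{g}) \comp \sigma^{-1}$, which no axiom of \Cref{fig:runaxiom} identifies because there are no pure generators to slide $\sigma$ past --- and the failure of centrality of $g$ is witnessed precisely by $(g \tensor \id_X) \comp (\id_X \tensor g) \neq (\id_X \tensor g) \comp (g \tensor \id_X)$, two composites that interleave the $\tilde{g}$-nodes and the braiding nodes differently along the runtime wire. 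Your model collapses exactly this distinction, and your proposed remedy of recording wire data has nothing to record when the generator has no non-runtime wires. So for generators with empty boundary you must either argue syntactically in $\MONRUN$ (the linear order of generator- and braiding-nodes along the runtime wire is an isotopy invariant untouched by the remaining axiom $\sigma\comp\sigma^{-1} = \id$), or use a model that is not the Kleisli category of a strong monad. Note finally that the claim you are aiming for fails in one degenerate case: if $\objs(\mathcal{G}) = \emptyset$ and $\mathcal{G}$ has a single generator $e$ with empty input and output lists, then every morphism of $\EFF(\mathbf{0},\mathcal{G})$ is a power of $e$ and is central, so $\zentre(\EFF(\mathbf{0},\mathcal{G})) \neq \MON(\mathbf{0})$ and the factorisation through $\mathsf{J}$ does not exist on the nose; a complete proof has to exclude or otherwise treat this case.
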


\begin{cor}%
  \label{cor:adjunctioneffectful}%
  Finally, from the previous section, we extract that there is an adjunction
  between \polygraphCouples{} and \effectfulCategories{} with effectful functors
  given by the runtime construction of the free effectful category $\mathsf{R}
  \colon \EffPolyGraph \to \EffCatStr$ and the forgetful functor $\mathsf{U}
  \colon \EffCatStr \to \EffPolyGraph$.
\end{cor}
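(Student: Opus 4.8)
The plan is to repackage the universal property already established in \Cref{theorem:runtime-as-a-resource} (proved through \Cref{lemma:freeness}) as an adjunction, using the standard fact that a choice of universal arrows determines a left adjoint. First I would make the right adjoint explicit: $\mathsf{U} \colon \EffCatStr \to \EffPolyGraph$ sends a strict effectful category $\cbaseV \to \ccatC$ to the \polygraphCouple{} whose pure polygraph is the underlying polygraph $\ForgetMon(\cbaseV)$ of the monoidal category of values, and whose effectful polygraph has as generators the hom-sets $\ccatC(A_{0} \tensor \dots \tensor A_{n}, B_{0} \tensor \dots \tensor B_{m})$. This is well-defined even though $\ccatC$ is merely premonoidal, because the premonoidal tensor of objects is a genuine (total) function on objects; and the two polygraphs share an object set precisely because $\cbaseV \to \ccatC$ is identity-on-objects. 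A strict effectful functor $(F,F_{0})$ preserves tensors of objects on the nose and commutes with the identity-on-objects functors, so it induces a morphism of \polygraphCouples{}, and functoriality of $\mathsf{U}$ is then immediate; on objects this is the functor $\mathcal{U}$ appearing in \Cref{lemma:freeness}.

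Next I would record the unit and its universal property. For a \polygraphCouple{} $(\hyV,\hyG)$, set $\mathsf{R}(\hyV,\hyG) = (\MON(\hyV) \to \EFF(\hyV,\hyG))$, which is a strict effectful category by \Cref{lemma:eff-is-premonoidal} and \Cref{lemma:identity-mon-eff}, and let $\eta_{(\hyV,\hyG)} \colon (\hyV,\hyG) \to \mathsf{U}(\mathsf{R}(\hyV,\hyG))$ be the inclusion of generators — a pure generator $v$ going to the diagram $\im_{\R} \tensor v$ read in $\MON(\hyV)$, and an effectful generator $f$ going to the single-node diagram $\R \tensor A \to \R \tensor B$ in $\EFF(\hyV,\hyG)$ — which one checks is a morphism of \polygraphCouples{}. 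Now \Cref{lemma:freeness} says exactly that $\eta_{(\hyV,\hyG)}$ is a universal arrow from $(\hyV,\hyG)$ to $\mathsf{U}$: for every strict effectful category $\cbaseV \to \ccatC$ and every \polygraphCouple{} morphism $F \colon (\hyV,\hyG) \to \mathsf{U}(\cbaseV,\ccatC)$ there is a unique strict effectful functor $\bar F \colon \mathsf{R}(\hyV,\hyG) \to (\cbaseV,\ccatC)$ with $\mathsf{U}(\bar F) \comp \eta_{(\hyV,\hyG)} = F$. By the standard construction of a left adjoint out of universal arrows, the assignment $(\hyV,\hyG) \mapsto \mathsf{R}(\hyV,\hyG)$ extends uniquely to a functor $\mathsf{R} \colon \EffPolyGraph \to \EffCatStr$ for which $\eta$ is natural, and $\mathsf{R} \dashv \mathsf{U}$ with unit $\eta$; concretely, $\mathsf{R}$ of a \polygraphCouple{} morphism $(u,f)$ is the unique strict effectful functor classifying $\eta \comp (u,f)$, and functoriality of $\mathsf{R}$ is then forced by the uniqueness clause of \Cref{lemma:freeness}.

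I expect the only real work to be bookkeeping rather than anything deep: verifying that $\mathsf{U}$ genuinely lands in $\EffPolyGraph$ and is functorial (the premonoidal hom-set description behaves well under strict effectful functors, which preserve tensors of objects and centrality), that $\eta_{(\hyV,\hyG)}$ really is a \polygraphCouple{} morphism, and that the hom-set bijection
\[\EffCatStr(\mathsf{R}(\hyV,\hyG), (\cbaseV,\ccatC)) \;\cong\; \EffPolyGraph((\hyV,\hyG), \mathsf{U}(\cbaseV,\ccatC))\]
furnished by \Cref{lemma:freeness} is natural in both arguments. Naturality in $(\cbaseV,\ccatC)$ is a short diagram chase using functoriality of $\mathsf{U}$, and naturality in $(\hyV,\hyG)$ follows once more from the uniqueness of classifying functors, so there is no genuinely new content beyond \Cref{theorem:runtime-as-a-resource}.
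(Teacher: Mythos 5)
Your proposal is correct and follows essentially the same route as the paper: the corollary is stated there without a separate proof precisely because it is the standard repackaging of the universal property in \Cref{lemma:freeness} (as assembled in \Cref{theorem:runtime-as-a-resource}) into an adjunction via universal arrows, with the right adjoint being the forgetful functor $\mathcal{U}$ already used in that lemma. Your write-up simply makes explicit the bookkeeping (definition of $\mathsf{U}$, the unit, naturality) that the paper leaves implicit.
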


\begin{thm}%
  \label{thm:adjunction-string-premonoidals}%
  There is an adjunction between \premonoidalCategories{} and \polygraphs{}
  given by the string diagrams of \effectfulCategories{} with every node needing
  the runtime.
\end{thm}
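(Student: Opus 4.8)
The plan is to obtain the adjunction by \emph{composing} the two adjunctions already in hand and then \emph{restricting} the composite along the fully faithful inclusion $\mathsf{J} \colon \PremonStr \to \EffCatStr$. \Cref{lemma:adjointpolygraphcouple} supplies $\mathsf{I} \dashv \mathsf{V}$ between $\PolyGraph$ and $\EffPolyGraph$, and \Cref{cor:adjunctioneffectful} supplies the runtime adjunction $\mathsf{R} \dashv \mathsf{U}$ between $\EffPolyGraph$ and $\EffCatStr$. Since composites of left (resp.\ right) adjoints are again left (resp.\ right) adjoint, I get an adjunction $(\mathsf{I}\comp\mathsf{R}) \dashv (\mathsf{U}\comp\mathsf{V})$ between $\PolyGraph$ and $\EffCatStr$, whose left adjoint sends a \polygraph{} to the free \effectfulCategory{} on it regarded as having no \pure{} generators.

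First I would record the following standard fact about restricting an adjunction along a fully faithful functor: if $L \dashv R$ with $L \colon \mathcal A \to \mathcal B$, if $\mathsf J \colon \mathcal C \to \mathcal B$ is fully faithful, and if $L$ factors as $L = L' \comp \mathsf J$ for some functor $L' \colon \mathcal A \to \mathcal C$, then $L' \dashv (\mathsf J \comp R)$ via the natural bijections
\[
  \mathcal C(L' a, c) \;\cong\; \mathcal B(\mathsf J L' a, \mathsf J c) \;=\; \mathcal B(L a, \mathsf J c) \;\cong\; \mathcal A\big(a, (\mathsf J \comp R)(c)\big).
\]
I would then apply this with $\mathcal A = \PolyGraph$, $\mathcal B = \EffCatStr$, $\mathcal C = \PremonStr$, $L = \mathsf{I}\comp\mathsf{R}$, $R = \mathsf{U}\comp\mathsf{V}$, and $\mathsf J$ the centre-inclusion functor of \Cref{thm:adjunction-string-premonoidals}'s preamble. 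The required factorization of $L$ through $\mathsf{J}$ is exactly \Cref{lemma:freepremonoidalfactors}, which yields the codomain-restricted functor $\mathsf{R}^\ast \colon \PolyGraph \to \PremonStr$ through which $\mathsf{I}\comp\mathsf{R}$ passes. Hence $\mathsf{R}^\ast \dashv (\mathsf{J}\comp\mathsf{U}\comp\mathsf{V})$, an adjunction between \premonoidalCategories{} and \polygraphs{}; the right adjoint sends a strict \premonoidalCategory{} $\catC$ to $\zentre(\catC)\to\catC$, then to its underlying \effectfulPolygraph{}, then forgets the \pure{} edges — that is, to the \polygraph{} with edges $\catC(A_{0}\tensor\dots\tensor A_{n},B_{0}\tensor\dots\tensor B_{m})$.

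It then remains only to recognise the left adjoint $\mathsf{R}^\ast$ as ``string diagrams with a runtime node on everything''. By construction $\mathsf{R}^\ast(\hG)$ is the underlying \premonoidalCategory{} of $\EFF(\hG_{0},\hG)$, where $\hG_{0}$ is the \polygraph{} obtained from $\hG$ by keeping its objects and discarding all generators, so by \Cref{theorem:runtime-as-a-resource} its hom-sets are
\[
  \mathsf{R}^\ast(\hG)(A,B) \;\cong\; \MONRUN(\hG_{0},\hG)(\R \tensor A, \R \tensor B),
\]
namely \stringDiagrams{} over the generators of $\hG$ in which the runtime wire $\R$ is threaded, as both an input and an output, through every node. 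Naturality of the unit and counit, and the triangle identities, are inherited from the two composed adjunctions together with the full faithfulness of $\mathsf{J}$, so nothing further need be checked here.

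The one non-formal ingredient is \Cref{lemma:freepremonoidalfactors} itself: one must verify that in the freely generated \effectfulCategory{} over a \polygraph{} with no \pure{} generators the only central morphisms are identities, so that $\mathsf{I}\comp\mathsf{R}$ genuinely lands in the essential image of $\mathsf{J}$ and $\mathsf{R}^\ast$ is a well-defined functor. I expect this (already discharged in that lemma) to be the substantive point; granted it, the theorem is a purely formal consequence of \Cref{lemma:adjointpolygraphcouple}, \Cref{cor:adjunctioneffectful}, and the restriction fact above, with the remaining bookkeeping — functoriality of $\mathsf{R}^\ast$ on \polygraph{} morphisms and compatibility of the units and counits — being routine.
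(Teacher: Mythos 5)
Your proposal is correct and follows essentially the same route as the paper: the paper's proof is precisely the chain of natural bijections $\PremonStr(\mathsf{R}^\ast\mathsf{I}\mathcal{G};\mathbb{P}) \cong \EffCatStr(\mathsf{J}\mathsf{R}^\ast\mathsf{I}\mathcal{G};\mathsf{J}\mathbb{P}) = \EffCatStr(\mathsf{R}\mathsf{I}\mathcal{G};\mathsf{J}\mathbb{P}) \cong \EffPolyGraph(\mathsf{I}\mathcal{G};\mathsf{U}\mathsf{J}\mathbb{P}) \cong \PolyGraph(\mathcal{G};\mathsf{V}\mathsf{U}\mathsf{J}\mathbb{P})$, which is your ``restrict a composite adjunction along the fully faithful $\mathsf{J}$'' argument written out explicitly, resting on the same three ingredients (\Cref{lemma:adjointpolygraphcouple}, \Cref{cor:adjunctioneffectful}, \Cref{lemma:freepremonoidalfactors}) plus full faithfulness of $\mathsf{J}$. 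You also correctly isolate \Cref{lemma:freepremonoidalfactors} — triviality of the centre of the free effectful category on a polygraph with no pure generators — as the only non-formal input.
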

\begin{proof}%
  We show the adjunction by a natural bijection of morphism sets. Let
  $\mathcal{G}$ be any \polygraph{} and let $\mathbb{P}$ be any
  \premonoidalCategory{}. The right adjoint will be $\mathsf{U}_\bullet =
  \mathsf{V} \comp \mathsf{U} \comp \mathsf{J}$; the left adjoint will be
  $\mathsf{I} \comp \mathsf{R^\ast}$, as obtained in Lemma
  \ref{lemma:freepremonoidalfactors}.
  \begin{align*}
    \PolyGraph(\mathcal{G}; \mathsf{U}_\bullet \mathbb{P}) & \cong
    \PolyGraph(\mathcal{G}; \mathsf{VUJ} \mathbb{P}) \overset{(i)}\cong
    \EffPolyGraph(\mathsf{I}\mathcal{G}; \mathsf{UJ} \mathbb{P}) \overset{(ii)}\cong
    \EffCatStr(\mathsf{RI}\mathcal{G}; \mathsf{J} \mathbb{P}) \\ & \overset{(iii)}\cong
    \EffCatStr(\mathsf{J}\mathsf{R}^\ast\mathsf{I}\mathcal{G}; \mathsf{J} \mathbb{P}) \overset{(iv)}\cong
    \PremonStr(\mathsf{R}^\ast\mathsf{I}\mathcal{G}; \mathbb{P}).
  \end{align*}%
  Here, we have used that \emph{(i)} the adjunction between \polygraphs{} and
  \polygraphCouples{} by Lemma \ref{lemma:adjointpolygraphcouple}; \emph{(ii)}
  the adjunction between \polygraphCouples{} and \effectfulCategories{} in
  Corollary \ref{cor:adjunctioneffectful}; \emph{(iii)} the factorization in \Cref{lemma:freepremonoidalfactors}; and \emph{(iv)} that
  \premonoidalCategories{} embed fully-faithfully into \effectfulCategories{}.
\end{proof}

\section{Conclusions}

\PremonoidalCategories{} are monoidal categories with runtime, and we can still use monoidal string diagrams and unrestricted topological deformations to reason about them.
Instead of dealing directly with \premonoidalCategories{}, we employ the better-behaved notion of non-cartesian Freyd categories, \effectfulCategories{}. There exists a more fine-grained notion of ``Cartesian effect category'' \cite{dumas11}, which generalizes Freyd categories
and justifies using “\effectfulCategory{}” for the general case.

Ultimately, this is a first step towards our more ambitious project of presenting the categorical structure of programming languages in a purely diagrammatic way, revisiting Alan Jeffrey's work \cite{jeffrey97,jeffrey1997premonoidal,marionotes2022}.
The internal language of premonoidal categories and effectful categories is given by the \emph{arrow do-notation} \cite{paterson01}; at the same time, we have shown that it is given by suitable string diagrams.
This correspondence allows us to translate between programs and string diagrams (\Cref{fig:premonoidalprogram}).

\begin{exa}
  This example illustrates a ``Hello world'', in diagrams and do-notation.
\begin{figure}[H]
  \centering
  \begin{minipage}{0.60\textwidth}
      \begin{Verbatim}[commandchars=\\\{\}]
  \colorPre{proc} () \colorPre{-> do}
    question \colorMon{<- "What's your name?"}
    () \colorPre{<-} \colorPre{print} \colorPre{-<} question
    name \colorPre{<-} \colorPre{get} \colorPre{-<} ()
    greeting \colorMon{<-} \colorMon{"Hello"}
    output \colorMon{<- concatenate}(greeting, name)
    () \colorPre{<- print -<} output
  \colorPre{return} ()
      \end{Verbatim}
  \end{minipage}
  \begin{minipage}{0.35\textwidth}

\tikzset{every picture/.style={line width=0.75pt}} %

\begin{tikzpicture}[x=0.75pt,y=0.75pt,yscale=-1,xscale=1]
\draw [color={rgb, 255:red, 0; green, 0; blue, 0 }  ,draw opacity=1 ]   (100,40) .. controls (100.17,48.17) and (91.83,43.83) .. (92,52) ;
\draw  [draw opacity=1 ] (80,24) -- (120,24) -- (120,40) -- (80,40) -- cycle ;
\draw  [color={rgb, 255:red, 191; green, 97; blue, 106 }  ,draw opacity=1 ] (64,52) -- (104,52) -- (104,68) -- (64,68) -- cycle ;
\draw [color={rgb, 255:red, 191; green, 97; blue, 106 }  ,draw opacity=1 ]   (64,8) -- (64,32) ;
\draw [color={rgb, 255:red, 191; green, 97; blue, 106 }  ,draw opacity=1 ]   (64,32) .. controls (64.17,43.83) and (75.83,39.83) .. (76,52) ;
\draw [color={rgb, 255:red, 191; green, 97; blue, 106 }  ,draw opacity=1 ]   (84,68) -- (84,76) ;
\draw  [draw opacity=1 ] (26,24) -- (62,24) -- (62,40) -- (26,40) -- cycle ;
\draw  [color={rgb, 255:red, 191; green, 97; blue, 106 }  ,draw opacity=1 ] (64,76) -- (104,76) -- (104,92) -- (64,92) -- cycle ;
\draw [color={rgb, 255:red, 0; green, 0; blue, 0 }  ,draw opacity=1 ]   (44,40) -- (44,92) ;
\draw  [draw opacity=1 ] (48,108) -- (112,108) -- (112,124) -- (48,124) -- cycle ;
\draw [color={rgb, 255:red, 0; green, 0; blue, 0 }  ,draw opacity=1 ]   (92,92) .. controls (92.17,100.17) and (103.83,99.83) .. (104,108) ;
\draw [color={rgb, 255:red, 191; green, 97; blue, 106 }  ,draw opacity=1 ]   (76,92) .. controls (76,108.27) and (32,97.07) .. (32,112) ;
\draw [color={rgb, 255:red, 191; green, 97; blue, 106 }  ,draw opacity=1 ]   (32,112) -- (32,120) ;
\draw [color={rgb, 255:red, 0; green, 0; blue, 0 }  ,draw opacity=1 ]   (80,124) .. controls (80.17,132.17) and (71.83,127.83) .. (72,136) ;
\draw  [color={rgb, 255:red, 191; green, 97; blue, 106 }  ,draw opacity=1 ] (44,136) -- (84,136) -- (84,152) -- (44,152) -- cycle ;
\draw [color={rgb, 255:red, 191; green, 97; blue, 106 }  ,draw opacity=1 ]   (32,120) .. controls (32.17,131.83) and (55.83,123.83) .. (56,136) ;
\draw [color={rgb, 255:red, 191; green, 97; blue, 106 }  ,draw opacity=1 ]   (64,152) -- (64,172) ;
\draw  [color={rgb, 255:red, 255; green, 255; blue, 255 }  ,draw opacity=1 ][fill={rgb, 255:red, 255; green, 255; blue, 255 }  ,fill opacity=1 ] (54.4,102.15) .. controls (54.4,101.05) and (55.3,100.15) .. (56.4,100.15) .. controls (57.5,100.15) and (58.4,101.05) .. (58.4,102.15) .. controls (58.4,103.26) and (57.5,104.15) .. (56.4,104.15) .. controls (55.3,104.15) and (54.4,103.26) .. (54.4,102.15) -- cycle ;
\draw [color={rgb, 255:red, 0; green, 0; blue, 0 }  ,draw opacity=1 ]   (44,92) .. controls (44.17,100.17) and (59.83,99.83) .. (60,108) ;

\draw (100,32) node  [font=\scriptsize,opacity=1 ]  {\emph{what}};
\draw (84,60) node  [font=\scriptsize,color={rgb, 255:red, 191; green, 97; blue, 106 }  ,opacity=1 ]  {\emph{print}};
\draw (84,84) node  [font=\scriptsize,color={rgb, 255:red, 191; green, 97; blue, 106 }  ,opacity=1 ]  {\emph{get}};
\draw (80,116) node  [font=\scriptsize,opacity=1 ]  {\emph{concat}};
\draw (44,32) node  [font=\scriptsize,opacity=1 ]  {\emph{hello}};
\draw (64,144) node  [font=\scriptsize,color={rgb, 255:red, 191; green, 97; blue, 106 }  ,opacity=1 ]  {\emph{print}};
\draw (19.5,60.5) node  [font=\tiny,color={rgb, 255:red, 0; green, 0; blue, 0 }  ,opacity=1 ]  {\emph{greeting}};
\draw (123.5,48.5) node  [font=\tiny,color={rgb, 255:red, 0; green, 0; blue, 0 }  ,opacity=1 ]  {\emph{question}};
\draw (118.5,96.5) node  [font=\tiny,color={rgb, 255:red, 0; green, 0; blue, 0 }  ,opacity=1 ]  {\emph{name}};
\draw (98.5,131.5) node  [font=\tiny,color={rgb, 255:red, 0; green, 0; blue, 0 }  ,opacity=1 ]  {\emph{output}};

\end{tikzpicture}
   \end{minipage}
  \caption{Premonoidal program in arrow do-notation and string diagrams.}
  \label{fig:premonoidalprogram}
\end{figure}
\end{exa}

\section{Related and Further work.}

This manuscript is an extended version of ``Promonads and String Diagrams for
Effectful Categories'' by the first-named author, presented at Applied Category
Theory 2022 \cite{roman:promonadsStringDiagrams}. It also extends and takes
examples from the PhD thesis of the first-named author, which was supervised
by the second-named author \cite{romanThesis}. The present version restructures
the presentation for clarity, includes fully detailed proofs of all results, and
adds a new section on \premonoidalCategories{} (\Cref{sec:stringPremonoidal}).

Staton and Møgelberg \cite{mogelberg14} propose a formalization of Jeffrey's graphical calculus for effectful categories that arise as the Kleisli category of a strong monad.
They prove that \emph{'every strong monad is a linear-use state monad'}, that is, a state monad of the form $\R \multimap !(\bullet) \otimes \R$, where the state $\R$ is an object that cannot be copied nor discarded.
Levy's Call-By-Push-Value \cite{levy2004} is the closest translation of Freyd and \effectfulCategories{} into a programming language that synthesizes the functional and imperative paradigms. Our work can be regarded as providing a string diagrammatic counterpart preserving effects but without higher-order functions.

Relatedly, the ``Functional Machine Calculus'' is a model of higher-order effectful computation developed in the work of Heijltjes, Barrett and McCusker~\cite{heijltjes22:functionalmachine,heijltjes22:functionalmachineii,barrett23:thesis}. The functional machine calculus uses a string diagrammatic language reminiscent of the string diagrams for \effectfulCategories{} that we introduce here; further work formalizing this connection is warranted.
One of the aspects that separates other graphical languages like this one from the string diagrams for \effectfulCategories{} is the occurrence of multiple independent runtimes, corresponding to multiple independent effects. Earnshaw and Nester \cite{earnshaw:devicesEA}, in joint work with this author, explore how multiple runtimes may permit a more fine-grained presentation of \effectfulCategories{}; this opens a future direction for generalizing this work.

Finally, Earnshaw and this second author \cite{earnshaw23:tracetheory} have developed a string diagrammatic trace theory for monoidal formal languages and monoidal automata that uses string diagrams of \effectfulCategories{}.

\newpage
\noindent\textbf{\emph{Acknowledgements.}} The authors want to thank the
anonymous reviewers at LMCS and the reviewers for ``Promonads and String
Diagrams for Effectful Categories'' at Applied Category Theory 2022
\cite{roman:promonadsStringDiagrams}; especially their suggestion of separating
and highlighting the first part of that work, which considerably improved the
narrative. The authors want to thank Matt Earnshaw, Sam Staton, Tarmo Uustalu,
and Niels Voorneveld for many helpful comments and discussions.

\bibliographystyle{alphaurl}
\bibliography{bibliography}

\newpage
\appendix

\end{document}